\newcommand{\scal}[2]{\langle #1,#2\rangle}
\newcommand{\rr}[1]{\mathbf R^{#1}}
\newcommand{\zz}[1]{\mathbf Z^{#1}}
\newcommand{\cc}[1]{\mathbf C^{#1}}
\newcommand{\nm}[2]{\Vert #1\Vert _{#2}}
\newcommand{\op}{\operatorname{Op}}
\newcommand{\sets}[2]{\{ \, #1\, ;\, #2\, \} }
\newcommand{\Sets}[2]{\left \{ \, #1\, ;\, #2\, \right \} }
\newcommand{\cdo}{\, \cdot \, }
\newcommand{\loc}{\operatorname{loc}}
\newcommand{\wpr}{{\text{\footnotesize $\#$}}}
\newcommand{\eabs}[1]{\langle #1\rangle}     
\newcommand{\tp}{\operatorname{Tp}}
\newcommand{\vrum}{\vspace{0.1cm}}
\newcommand{\Sh}{\operatorname{Sh}}
\newcommand{\indlim}{\operatorname{ind \, lim\, }}
\renewcommand{\projlim}{\operatorname{proj \, lim\, }}
\newcommand{\aw}{\operatorname{aw}}
\newcommand{\repart}{\operatorname{Re}}
\newcommand{\GL}{\mathbf{M}}
\newcommand{\nn}[1]{{\mathbf N}^{#1}}
\newcommand{\maclA}{\mathcal A}
\newcommand{\maclB}{\mathcal B}
\newcommand{\maclC}{\mathcal C}
\newcommand{\maclH}{\mathcal H}
\newcommand{\maclL}{\mathcal L}
\newcommand{\maclS}{\mathcal S}
\newcommand{\maclT}{\mathcal T}
\newcommand{\mascF}{\mathscr F}
\newcommand{\mascP}{\mathscr P}
\newcommand{\mascS}{\mathscr S}
\newcommand{\fka}{\mathfrak a}
\numberwithin{equation}{section}          
\newtheorem{thm}{Theorem}
\numberwithin{thm}{section}
\newcommand{\rubrik}{}
\newtheorem{prop}[thm]{Proposition}
\newtheorem{cor}[thm]{Corollary}
\newtheorem{lemma}[thm]{Lemma}
\theoremstyle{definition}
\newtheorem{defn}[thm]{Definition}
\newtheorem{example}[thm]{Example}
\theoremstyle{remark}
\newtheorem{rem}[thm]{Remark}
\title{Wick and anti-Wick characterizations of linear operators on
spaces of power series expansions}
\author{Joachim Toft}
\address{Department of Mathematics,
Linn{\ae}us University, V{\"a}xj{\"o}, Sweden}
\email{joachim.toft@lnu.se}
\keywords{Bargmann transform, Wick operators, anti-Wick operators,
pseudo-differential operators, Toeplitz operators,
Pilipovi{\'c} spaces, Gelfand-Shilov spaces}
\subjclass[2010]{Primary: 32W25, 35S05, 32A17, 46F05, 42B35
\quad Secondary: 32A25, 32A05}
\begin{document}

\begin{abstract}
We study links between
Wick, anti-Wick and analytic kernel operators on the Bargmann
transform side. We consider classes of kernels,
whose corresponding operators agree with
the sets of linear and continuous operators on 
spaces of power series expansions, which are Bargmann
images of Pilipovi{\'c} spaces.
We show that in several situations, the sets of Wick
and kernel operators with symbols and kernels
in such classes agree. We also find suitable subclasses
to these kernel classes, whose
corresponding sets of Wick and anti-Wick operators agree.
We also show ring, module and composition properties for
such classes.
\end{abstract}

\maketitle

\par

\section{Introduction}\label{sec0}

\par

A convenient way to represent linear operators acting on functions
defined on
the configuration space $\rr d$, is to apply the Bargmann
transform. In this process, these operators are transformed
into (analytic) integral operators $T_K$ with
kernels $K$, or as analytic pseudo-differential operators
$\op _{\mathfrak V}(a)$ with symbols $a$, given by
\begin{align}
T_KF(z) &= \pi ^{-d}
\int _{\cc d} K(z,w)F(w)e^{-|w|^2}\, d\lambda (w)
\label{Eq:AnalKernelopIntro}
\intertext{and}
\op _{\mathfrak V}(a)F (z) &= \pi ^{-d}
\int _{\cc d} a(z,w)F(w)e^{(z-w,w)}\, d\lambda (w),
\label{Eq:AnalPseudoIntro}
\end{align}
respectively, when $F$ is a suitable analytic function on $\cc d$,
where $d\lambda$ is the Lebesgue measure
and $(\cdo ,\cdo )$ is the scalar product on $\cc d$.
Here $K(z,w)$ and $a(z,w)$ are analytic
functions or, more generally, power series expansions with respect to
$(z,\overline w)\in \cc d\times \cc d\simeq \cc {2d}$. (See \cite{Ho1}
or Section \ref{sec1} for notations.) The operator
$\op _{\mathfrak V}(a)$ is often called the Wick or Berezin operator with
(Wick) symbol $a$.

\par

An important subclass of Wick operators are the anti-Wick operators,
$\op _{\mathfrak V}^{\aw}(a)$ with suitable symbols $a(z,w)$ being analytic with
respect to $(z,\overline w)\in \cc {2d}$, and given by
\begin{align}
\op _{\mathfrak V}^{\aw}(a)F (z) &= \pi ^{-d}
\int _{\cc d} a(w,w)F(w)e^{(z-w,w)}\, d\lambda (w),
\label{Eq:AntiWickIntro}
\end{align}
where $F$ is a suitable analytic function on $\cc d$. An important
feature for anti-Wick operators is that $\op _{\mathfrak V}^{\aw}(a)$ is
positive semi-definite when $a(z,z)\ge 0$ for every $z\in \cc d$, which
is essential when performing certain types of energy estimates in quantum
physics. Any similar transitions on positivity from symbols to Wick operators are
in general not true. On the other hand for suitable $a$ we have the expansion
formula 
\begin{equation}\label{Eq:WickToAntiWickIntro}
\op _{\mathfrak V}(a)
=
\sum _{|\alpha |<N} \frac {(-1)^{|\alpha |}}{\alpha !}
\op _{\mathfrak V}^{\aw}(b_\alpha ) + \op _{\mathfrak V}(c_N)
\quad \text{where} \quad
b_\alpha 
=
\partial _z^\alpha \overline \partial _{w}^\alpha a,
\end{equation}
of Wick operators into a superposition of anti-Wick operators with error term
$\op _{\mathfrak V}(c_N)$, deduced in \cite{TeToWa}. Here the first term
$\op _{\mathfrak V}^{\aw}(b_0)=\op _{\mathfrak V}^{\aw}(a)$
on the right-hand side, possess the convenient positivity transition from
symbols to anti-Wick operators, while $c_N$ is a linear combination of 
expressions of the form
$$
\int _0^1 (1-t)^{|\alpha |-1}\partial _z^\alpha \overline \partial _w^\alpha
a(w+t(z-w),w)\, dt,
\qquad |\alpha |=N.
$$
In several situations, this term is dominating, while 
the reminder term $\op _{\mathfrak V}(c_N)$ is small compared to the other operators
in the expansion. (See e.{\,}g. \cite{Shubin1,TeToWa}.)

\par
 
Wick and anti-Wick operators
act between suitable spaces of analytic functions, which are
the Bargmann transforms of spaces of functions or distributions on the
configuration space $\rr d$. Some ideas on this approach were
performed by G. C. Wick in \cite{Wick}. Later on, F. Berezin improved
and extended the theory \cite{Berezin71,Berezin72}, where more
general classes of operators were considered, compared to \cite{Wick}.
(See also \cite{Fo,Shubin1} for some other earlier results on Wick and anti-Wick
operators.)
An important part of the investigations in \cite{Berezin71,Berezin72,Wick}
concerns the links between Wick and anti-Wick operators. Especially in
\cite{Berezin71,Berezin72} convenient formulae were established
which explain such links in the case when the Wick and anti-Wick
symbols are polynomials. These formulae are in some sense related to
\eqref{Eq:WickToAntiWickIntro}.

\par

Some recent continuity properties for (analytic) integral, Wick and anti-Wick
operators are obtained in \cite{Teofanov2,TeToWa}. In \cite{Teofanov2}, such
continuity properties are obtained in the framework of the spaces of power
series expansions,
\begin{equation}\label{Eq:IntroAspaces}
\maclA _{0,s}(\cc d),\quad  \maclA _s(\cc d),
\quad \text{with duals}\quad
\maclA _s'(\cc d), \quad
\maclA _{0,s}'(\cc d),
\end{equation}
respectively.
These spaces are the Bargmann images of the so-called Pilipovi{\'c} spaces
and their distribution (dual) spaces
\begin{equation}\label{Eq:IntroHspaces}
\maclH _{0,s}(\rr d),\quad  \maclH _s(\rr d),\quad \maclH _s'(\rr d)
\quad \text{and}\quad
\maclH _{0,s}'(\rr d),
\end{equation}
a family of function and (ultra-)distribution spaces, which contains
the Fourier invariant Gelfand-Shilov spaces and their ultra-distribution spaces,
and which are thoroughly investigated in \cite{Toft18}.
Some early ideas to
some of the spaces $\maclH _s(\rr d)$, $\maclH _{0,s}(\rr d)$ and their duals
is given by S. Pilipovi{\'c} in \cite{Pil}.
For example in \cite{Pil} it is proved that $\maclH _{s}(\rr d)$ and
$\maclH _{0,t}(\rr d)$ agree with the Fourier invariant Gelfand-Shilov spaces
$\maclS _{s}(\rr d)$ and $\Sigma _t(\rr d)$, respectively, when $s\ge \frac 12$
and $t>\frac 12$. (See also Proposition \ref{Prop:PilSpacChar1} in Section
\ref{sec1} for a more complete view.)

\par

In \cite{Teofanov2} it is remarked that the usual distribution kernel results for
linear operators from $\maclH _s(\rr d)$ or $\maclH _{0,s}(\rr d)$
to their duals,
give one-to-one correspondence of linear and continuous operators
between $\maclA _s(\cc d)$ and $\maclA _s'(\cc d)$ and analytic
integral operators with kernels in $\wideparen \maclA _s(\cc {2d})$ or in
$\wideparen \maclA _s'(\cc {2d})$. Here $K(z,w)\in \wideparen
\maclA _s(\cc {2d})$, if and only if $K(z,\overline w)\in \maclA _s(\cc {2d})$,
and similarly for other spaces. It is then proved that for suitable $s$,
the set of Wick operators with symbols in $\wideparen \maclA _s'(\cc {2d})$
agrees with the set of integral operators with kernels in
$\wideparen \maclA _s'(\cc {2d})$. Consequently, for such $s$, there is
a one-to-one correspondence of linear and continuous operators
between $\maclA _s(\cc d)$ and $\maclA _s'(\cc d)$ and Wick operators
with symbols in $\wideparen \maclA _s'(\cc {2d})$. Similar facts hold true
with $\maclA _{0,s}$ in place of $\maclA _s$ at each occurrence. (See
Theorems 2.7 and 2.8 in \cite{Teofanov2}.)

\medspace

In the paper we find characterizations of the kernels to (analytic)
integral operators and to symbols of Wick and anti-Wick
operators in order for these operators should be
continuous on the spaces in \eqref{Eq:IntroAspaces}.
This leads to introduction of certain classes of power series expansions
\begin{alignat}{5}
&\wideparen \maclB _{0,s}(\cc {2d}), &
\quad
&\wideparen \maclB _s(\cc {2d}), &
\quad
&\wideparen \maclB _s^\star (\cc {2d}) &
\quad &\text{and} & \quad
&\wideparen \maclB _{0,s}^\star (\cc {2d}),
\label{Eq:IntroBspaces}
\intertext{and slightly smaller subclasses}
&\wideparen \maclC _{0,s}(\cc {2d}), &
\quad
&\wideparen \maclC _s(\cc {2d}), &
\quad
&\wideparen \maclC _s^\star (\cc {2d}) &
\quad &\text{and} &\quad
&\wideparen \maclC _{0,s}^\star (\cc {2d}),
\label{Eq:IntroCspaces}
\end{alignat}
which are defined in similar and convenient ways as 
$\wideparen \maclA _{0,s}(\cc {2d})$,
$\wideparen \maclA _s'(\cc {2d})$ and their
duals (see Definition \ref{Def:elltauSpaces2}).
In Section \ref{sec2} we prove that $K\mapsto T_K$ is bijective from
the spaces in \eqref{Eq:IntroBspaces} into the sets of continuous mappings
on the respective spaces in \eqref{Eq:IntroAspaces}. In particular,
$$
\sets {T_K}{K\in \wideparen \maclB _s(\cc {2d})} = \maclL (\maclA _s(\cc d))
$$
(see Propositions \ref{Prop:KernelsDifficultDirectionNew}
and \ref{Prop:KernelsEasyDirectionNew}).
For $s<\frac 12$ we extend the last equality in Section \ref{sec4} into
\begin{equation}\label{Eq:IntroReprIdent}
\begin{aligned}
\op _{\mathfrak V}^{\aw}(\maclC _s(\cc {2d}))
=
\op _{\mathfrak V}(\maclC _s(\cc {2d}))
&\subseteq
\op _{\mathfrak V}(\maclB _s(\cc {2d}))
\\[1ex]
&=
\sets {T_K}{K\in \wideparen \maclB _s(\cc {2d})}
=
\maclL (\maclA _s(\cc d)),
\end{aligned}
\end{equation}
and similarly for the other spaces in \eqref{Eq:IntroAspaces},
\eqref{Eq:IntroBspaces} and \eqref{Eq:IntroCspaces}
(see
Theorems \ref{Thm:OpReprIdentB}
and \ref{Thm:OpReprIdentC}).

\par

For certain $s$, the spaces in \eqref{Eq:IntroBspaces} and 
\eqref{Eq:IntroCspaces}
are subspaces of $\wideparen A(\cc {2d})$ and
have simple characterizations in terms of
convenient estimates on involved symbols. For example,
if $s=\flat _\sigma$ with $\sigma >1$, then
\begin{align*}
\maclA _s(\cc d) &= \Sets {F\in A(\cc d)}
{\exists \, r_0>0,\ |F(z)|\lesssim e^{r_0|z|^{\frac {2\sigma}{\sigma +1}}}}
\intertext{and}
\wideparen \maclB _s(\cc {2d}) &= \Sets {a\in \wideparen A(\cc {2d})}
{\forall r>0,\ \exists \, r_0>0,\ |a(z,w)|\lesssim
e^{r_0|z|^{\frac {2\sigma}{\sigma +1}}+r|w|^{\frac {2\sigma}{\sigma -1}}}}.
\end{align*}
(Cf. \cite{AbFeGaToUs,Toft18}.)

\medspace

In several situations it is straight-forward to carry over
the characterizations above to characterize linear and continuous
operators acting on the spaces in \eqref{Eq:IntroHspaces},
by applying the Bargmann transform and its inverse in suitable ways. Here
the anti-Wick operator $\op _{\mathfrak V}^{\aw}(a)$ corresponds to the
Toeplitz operator $\tp _{\mathfrak V}(a)$, in the sense that
$$
\op _{\mathfrak V}^{\aw}(a) = \mathfrak V _d \circ \tp _{\mathfrak V}(a)
\circ \mathfrak V _d^{-1},
$$
when $\mathfrak V_d$ is the Bargmann transform. Here $\tp _{\mathfrak V}(a)$
is defined by the formula
$$
(\tp _{\mathfrak V}(a)f,g)_{L^2(\rr d)} = (\fka \cdot V_\phi f,V_\phi g)_{L^2(\rr {2d})},
\quad
\fka (\sqrt 2\, x,-\sqrt 2\, \xi )=a(z,z),\ z=x+i\xi ,
$$
where $V_\phi f$ is the short-time Fourier transform of $f$ with respect to the
window function
$$
\phi (x)=\pi ^{-\frac d4}e^{-\frac 12|x|^2}
$$
(see Subsection \ref{subsec1.7}). Especially it follows from 
\eqref{Eq:IntroReprIdent} that
\begin{equation}\label{Eq:IntroReprIdentPil}
\tp _{\mathfrak V}(\maclC _s(\cc {2d})),
\subseteq
\maclL (\maclH _s(\rr d)),\quad s<\frac 12,
\end{equation}
(see Theorem \ref{Thm:ToeplExt}).
Here we remark that in \eqref{Eq:IntroReprIdentPil},
the former class of operators is, in some sense close to the latter
class. In particular several linear and continuous operators
on $\maclH _s(\rr d)$ can be described as Toeplitz operators.

\par

If instead $s\ge \frac 12$, then the closed links \eqref{Eq:IntroReprIdent}
and \eqref{Eq:IntroReprIdentPil} between Wick, anti-Wick and
kernel operators are violated.
For example, it is shown
in Subsection \ref{Subsec4.3} that estimates, which seems rather
natural, can not be performed when finding an anti-Wick operator
to a Wick operator (see Remark \ref{Rem:DifferenceEstWickAntiWick}).

\medspace

The analysis behind \eqref{Eq:IntroReprIdent} and the other related results
are based on some continuity properties for linear and bilinear
binomial type operators of
independent interests, which acts on coefficients of power
series expansions, given in \cite{Teofanov2}
and in Section \ref{sec3}. The linear binomial operators are given by
\begin{alignat*}{1}
(\maclT _{0,t} c) (\alpha ,\beta) &= \sum_{\gamma \leq \alpha ,\beta}
\left ( { \alpha  \choose \gamma } { \beta  \choose \gamma } \right )^{1/2}
t^{|\gamma|}
c(\alpha - \gamma, \beta - \gamma) ,
\intertext{and their formal adjoints}
(\maclT _{0,t} ^*c) (\alpha ,\beta) &= \sum_{\gamma \in \nn d}
\left ( { {\alpha +\gamma}  \choose \gamma } { {\beta +\gamma}  \choose \gamma }
\right )^{1/2}
 t^{|\gamma|}
 c(\alpha + \gamma, \beta + \gamma) ,
\end{alignat*}
when $c$ is a suitable sequence on $\nn {2d}$ and $t\in \mathbf C$ is fixed. Let
$T_{\maclA}$ be the linear and bijective map which takes the sequence
$\{c(\alpha ,\beta )\} _{\alpha ,\beta \in \nn d}$ into the power series expansion
$$
\sum _{\alpha ,\beta \in \nn d}c(\alpha ,\beta )e_\alpha (z)e_\beta (\overline w),
\qquad e_\alpha (z)= \frac {z^\alpha}{\sqrt {\alpha !}}\, .
$$
Also let
$$
\ell _{\maclA ,s}'(\nn {2d})
\quad \text{and}\quad
\ell _{\maclA ,0,s}'(\nn {2d})
$$
be the counter images of $\wideparen \maclA _s'(\cc {2d})$ and 
$\wideparen \maclA _{0,s}'(\cc {2d})$, respectively, of $T_\maclA$, and let
\begin{alignat}{5}
&\ell _{\maclB ,0,s}(\nn {2d}), &
\quad
&\ell _{\maclB ,s}(\nn {2d}), &
\quad
&\ell _{\maclB ,s}^\star (\nn {2d}) &
\quad &\text{and} & \quad
&\ell _{\maclB ,0,s}^\star (\nn {2d})
\label{Eq:DiscrBSpaceIntro}
\intertext{and}
&\ell _{\maclC ,0,s}(\nn {2d}), &
\quad
&\ell _{\maclC ,s}(\nn {2d}), &
\quad
&\ell _{\maclC ,s}^\star (\nn {2d}) &
\quad &\text{and} & \quad
&\ell _{\maclC ,0,s}^\star (\nn {2d})
\label{Eq:DiscrCSpaceIntro}
\end{alignat}
be the counter images of the respective spaces in \eqref{Eq:IntroBspaces}
and \eqref{Eq:IntroCspaces}
under $T_\maclA$. Then it is proved in \cite{Teofanov2} that $\maclT _{0,t}$ is homeomorphic
on $\ell _{\maclA ,s}'(\nn {2d})$ when $s<\frac12$ and on
$\ell _{\maclA ,0,s}'(\nn {2d})$ when $s\le \frac12$.

\par

In Sections \ref{sec3} and \ref{sec4} we complete
these mapping properties and prove that for similar $s$ one has that
$\maclT _{0,t}$ is homeomorphic on each of the spaces in
\eqref{Eq:DiscrBSpaceIntro}, and that $\maclT _{0,t}^*$ is
homeomorphic on each of the spaces in
\eqref{Eq:DiscrCSpaceIntro}. By letting
\begin{equation}\label{TtOperatorsIntro}
\maclT _{t} = T_{\maclA} \circ \maclT _{0,t}\circ T_{\maclA}^{-1}
\quad \text{and}\quad
\maclT _{t}^* = T_{\maclA} \circ \maclT _{0,t}^*\circ T_{\maclA}^{-1},
\end{equation}
the relations of the form \eqref{Eq:IntroReprIdent}
then follows from
$$
T_K=\op _{\mathfrak V}(a),
\quad
\op _{\mathfrak V}(\maclT _1^*a) = \op _{\mathfrak V}^{\aw}(a)
\quad \text{and}\quad
\op _{\mathfrak V}^{\aw}(\maclT _{-1}^*a) = \op _{\mathfrak V}(a),
$$
when $K=\maclT _{1}a$, or equivalently, when $a=\maclT _{-1}K$.
Here we observe that the inverses of the operators in
\eqref{TtOperatorsIntro} are given by
$$
\maclT _{t}^{-1} =  \maclT _{-t}
\quad \text{and}\quad (\maclT _{t}^*)^{-1} =  \maclT _{-t}^*.
$$
(See also \eqref{Eq:TtDef} and \eqref{Eq:Tt*Def} in
Section \ref{sec3} for exact formulae for $\maclT _{t}$
and $\maclT _{t}^*$.)

\par

The paper is organized as follows. In Section \ref{sec1} we
set the stage by providing necessary background notions and fixing the notation.
It contains useful properties for weight functions, Pilipovi{\'c} spaces, the
Bargmann
transform, Toeplitz operators, Wick and anti-Wick operators. Here we also
introduce the spaces \eqref{Eq:IntroBspaces} and \eqref{Eq:IntroCspaces}, and
explain some of their basic properties.

\par

In Section \ref{sec2} we deduce kernel results. Especially we show that
the spaces of linear and continuous operators on the spaces in
\eqref{Eq:IntroAspaces} agree with sets of operators with kernels
belonging to the spaces in \eqref{Eq:IntroBspaces}.

\par

In Section \ref{sec3} we discuss continuity and bijectivity
properties of the binomial operators $\maclT _{0,t}$ and $\maclT _{0,t}^*$,
especially on the spaces in \eqref{Eq:DiscrBSpaceIntro}
and \eqref{Eq:DiscrCSpaceIntro}. We link these
binomial operators to transitions between kernel, Wick 
and anti-Wick operators. Here we also introduce
certain bilinear binomial operators which are linked to
multiplications and compositions of Wick symbols, and prove
continuity properties for such operators.

\par

In Section \ref{sec4} we apply the results and analysis from Section 
\ref{sec3} to find continuity and identifications between
kernel, Wick and anti-Wick operators, as well as continuity
properties for compositions of Wick and anti-Wick symbols. For example,
here we show relationships like \eqref{Eq:IntroReprIdent}.


\par

In Section \ref{sec5} we present some consequences of our investigations
concerning linear operators which acts on functions and distributions on
$\rr d$. For example we show that for suitable $s$,
%
the sets of Toeplitz operators with symbols in
\eqref{Eq:IntroCspaces} is in some sense close to the set of
linear and continuous operators on
$\maclA _s(\cc d)$, $\maclA _{0,s}(\cc d)$ and their duals.

\par

In Section \ref{sec6} we deduce mapping properties for
$\maclA _s(\cc d)$,
$\maclA _{0,s}(\cc d)$ and their duals, as well as
the spaces in \eqref{Eq:IntroBspaces} and \eqref{Eq:IntroCspaces}
under linear pullbacks and trace mappings. These results can
be used to get alternative proofs of the multiplication properties
in Section \ref{sec4}.

\par

Finally some background analyses are presented in Appendices
\ref{App:A} and \ref{App:B}. In Appendix \ref{App:A} we present some
identifications of the spaces in \eqref{Eq:IntroAspaces}, \eqref{Eq:IntroBspaces}
and some other spaces in terms of spaces of analytic functions. In
Appendix \ref{App:B} we deduce some basic formulae in the transition between
Wick and anti-Wick symbols.

\par

\section*{Acknowledgement}

\par

The author was supported by Vetenskapsr{\aa}det (Swedish 
Science Council) within the project 2019-04890.

\par

\section{Preliminaries}\label{sec1}

\par

In this section we recall some facts on involved function and distribution
spaces as well as on pseudo-differential operators. In
Subsections \ref{subsec1.2} and \ref{subsec1.3} we give definitions and
review some basic
properties for Gelfand-Shilov spaces, Pilipovi{\'c} spaces and the
spaces in \eqref{Eq:IntroBspaces}. Thereafter we
discuss in Subsection \ref{subsec1.3} the Bargmann transform and
recall some topological spaces of entire
functions or power series expansions on $\cc d$. The section is concluded with a
review of some facts on pseudo-differential operators, Toeplitz operators, Wick and
anti-Wick operators.

\par

\subsection{Gelfand-Shilov spaces}\label{subsec1.2}
Let $0<s \in \mathbf R$ be fixed. Then the (Fourier invariant)
Gelfand-Shilov space $\maclS _s(\rr d)$ ($\Sigma _s(\rr d)$) of
Roumieu type (Beurling type) consists of all $f\in C^\infty (\rr d)$
such that
\begin{equation}\label{gfseminorm}
\nm f{\maclS _{s,h}}\equiv \sup
\left (
\frac {|x^\alpha \partial ^\beta
f(x)|}{h^{|\alpha  + \beta |}(\alpha !\, \beta !)^s}
\right )
\end{equation}
is finite for some $h>0$ (for every $h>0$). Here the supremum should be taken
over all $\alpha ,\beta \in \mathbf N^d$ and $x\in \rr d$. The semi-norms
$\nm \cdo {\maclS _{s,h}^\sigma}$ induce an inductive limit topology for the
space $\maclS _s(\rr d)$ and projective limit topology for $\Sigma _s(\rr d)$, and
the latter space becomes a Fr{\'e}chet space under this topology.

\par

The space $\maclS _s(\rr d)\neq \{ 0\}$ ($\Sigma _s(\rr d)\neq \{0\}$), if and only if
$s\ge \frac 12$ ($s> \frac 12$).

\par

The \emph{Gelfand-Shilov distribution spaces} $\maclS _s'(\rr d)$
and $\Sigma _s'(\rr d)$ are the (strong) duals of $\maclS _s(\rr d)$
and $\Sigma _s(\rr d)$, respectively.

\par

Let $(\cdo ,\cdo )_{L^2}$
be the scalar product in $L^2(\rr d)$. Then the duality between
$\maclS _s(\rr d)$ and $\maclS _s'(\rr d)$ can be obtained
by straight-forward extensions of the restriction of $(\cdo ,\cdo )_{L^2}$
to $\maclS _{1/2}(\rr d)$. In this setting we have
\begin{equation}\label{GSembeddings}
\begin{aligned}
\maclS _{1/2} (\rr d) &\hookrightarrow \Sigma _s  (\rr d) \hookrightarrow
\maclS _s (\rr d)
\hookrightarrow  \Sigma _t(\rr d)
\\[1ex]
&\hookrightarrow
\mascS (\rr d)
\hookrightarrow \mascS '(\rr d) 
\hookrightarrow \Sigma _t' (\rr d)
\\[1ex]
&\hookrightarrow  \maclS _s'(\rr d)
\hookrightarrow  \Sigma _s'(\rr d) \hookrightarrow \maclS _{1/2} '(\rr d),
\quad \frac 12<s<t.
\end{aligned}
\end{equation}
Here and
in what follows we use the notation $A\hookrightarrow B$ when the topological
spaces $A$ and $B$ satisfy $A\subseteq B$ with continuous embeddings.

\par

A convenient family of functions concerns the Hermite functions
$$
h_\alpha (x) = \pi ^{-\frac d4}(-1)^{|\alpha |}
(2^{|\alpha |}\alpha !)^{-\frac 12}e^{\frac 12{|x|^2}}
(\partial ^\alpha e^{-|x|^2}),\quad \alpha \in \nn d.
$$
The set of Hermite functions on $\rr d$ is an orthonormal basis for
$L^2(\rr d)$. It is also a basis for the Schwartz space and its distribution space,
and for any $\Sigma _s$ when $s>\frac 12$,
$\maclS _s$ when $s\ge \frac 12$ and their distribution
spaces. They are also eigenfunctions to the Harmonic
oscillator $H=H_d\equiv |x|^2-\Delta$ and to the Fourier transform
$\mathscr F$, given by
$$
(\mathscr Ff)(\xi )= \widehat f(\xi ) \equiv (2\pi )^{-\frac d2}\int _{\rr
{d}} f(x)e^{-i\scal  x\xi }\, dx, \quad \xi \in \rr d,
$$
when $f\in L^1(\rr d)$. Here $\scal \cdo \cdo$ denotes the usual
scalar product on $\rr d$. In fact, we have
$$
H_dh_\alpha = (2|\alpha |+d)h_\alpha .
$$

\par

The Fourier transform $\mathscr F$ extends
uniquely to homeomorphisms on $\mathscr S'(\rr d)$,
$\maclS _s'(\rr d)$ and on $\Sigma _s'(\rr d)$. Furthermore,
$\mascF$ restricts to
homeomorphisms on $\mathscr S(\rr d)$,
$\maclS _s(\rr d)$ and on $\Sigma _s (\rr d)$,
and to a unitary operator on $L^2(\rr d)$. Similar facts hold true
when the Fourier transform is replaced by a partial
Fourier transform.

\par

Gelfand-Shilov spaces and their distribution spaces can also
be characterized by estimates of short-time Fourier
transforms, (see e.{\,}g. \cite{GZ,Teof,Toft18}).
They also obey convenient mapping properties under short-time
Fourier transforms.
More precisely, let $\phi \in \mascS  (\rr d)$ be
fixed.
Then the \emph{short-time
Fourier transform} $V_\phi f$ of $f\in \mascS '
(\rr d)$ with respect to the \emph{window function} $\phi$ is
the Gelfand-Shilov distribution on $\rr {2d}$, defined by
\begin{equation}\label{Eq:STFT}
V_\phi f(x,\xi )  =
\mascF (f \, \overline {\phi (\cdo -x)})(\xi ), \quad x,\xi \in \rr d.
\end{equation}
If $f ,\phi \in \mascS (\rr d)$, then it follows that
$$
V_\phi f(x,\xi ) = (2\pi )^{-\frac d2}\int _{\rr d} f(y)\overline {\phi
(y-x)}e^{-i\scal y\xi}\, dy, \quad x,\xi \in \rr d.
$$

\par

By \cite[Theorem 2.3]{To11} it follows that the definition of the map
$(f,\phi)\mapsto V_{\phi} f$ from $\mascS (\rr d) \times \mascS (\rr d)$
to $\mascS(\rr {2d})$ is uniquely extendable to a continuous map from
$\maclS _s'(\rr d)\times \maclS_s'(\rr d)$
to $\maclS_s'(\rr {2d})$, and restricts to a continuous map
from $\maclS _s (\rr d)\times \maclS _s (\rr d)$
to $\maclS _s(\rr {2d})$.
The same conclusions hold with $\Sigma _s$ in place of
$\maclS_s$, at each place.

\par

\subsection{Spaces of sequences, Hermite series and power series expansions}
\label{subsec1.3}

\par

Next we recall the definitions of topological vector spaces of
Hermite series expansions, given in \cite{Toft18}. As in \cite{Toft18},
it is convenient to use suitable extensions of
$\mathbf R_+$ when indexing our spaces. 

\par

\begin{defn}
The sets $\mathbf R_\flat$, $\overline {\mathbf R_\flat}$
$\mathbf R_{\flat ,\infty}$ and $\overline {\mathbf R_{\flat ,\infty}}$ are given by
$$
{\textstyle{\mathbf R_\flat = \mathbf R_+ \underset{\sigma >0}{\textstyle{\bigcup}}
\{ \flat _\sigma \} }},
\quad
{\textstyle{\overline {\mathbf R_\flat} = \mathbf R_\flat \bigcup \{ 0 \} }},
\quad
{\textstyle{\mathbf R_{\flat ,\infty} = \mathbf R_\flat \bigcup \{ \infty \} }}
\quad \text{and}\quad
{\textstyle{\overline {\mathbf R_{\flat ,\infty}} = \overline {\mathbf R_\flat} \bigcup \{ \infty \} }}.
$$

\par

Beside the usual ordering in $\overline {\mathbf R_+}\bigcup \{\infty \}$,
the elements $\flat _\sigma$
in these subsets of $\overline {\mathbf R_{\flat ,\infty}}$ are ordered by
the relations $x_1<\flat _{\sigma _1}<\flat _{\sigma _2}<x_2$, when
$\sigma _1,\sigma _2,x_1,x_2\in \mathbf R_+$ satisfy
$x_1<\frac 12$, $x_2\ge \frac 12$ and $\sigma _1<\sigma _2$.
\end{defn}

\par

In order for defining the sequence spaces we shall make use of the weight
\begin{align}
\vartheta _{r,s}(\alpha )
&\equiv
\begin{cases}
1 & \text{when}\quad s=0,\ |\alpha |\le r,
\\[1ex]
\infty & \text{when}\quad s=0,\ |\alpha |> r,
\\[1ex]
e^{r|\alpha |^{\frac 1{2s}}} & \text{when}\quad s\in \mathbf R_+,
\\[1ex]
r^{|\alpha |}(\alpha !)^{\frac 1{2\sigma}} & \text{when}\quad s = \flat _\sigma ,
\\[1ex]
\eabs \alpha ^r & \text{when}\quad s=\infty ,
\qquad \alpha \in \nn d,
\end{cases}
\label{Eq:SeqWeightDef}
\end{align}
and we observe that such weights obey estimates given in the following
lemma.

\par

\begin{lemma}\label{Lemma:ThetaWeightBasic}
Let $\alpha ,\beta \in \nn d$ and $s,\sigma \in \mathbf R_+$. Then
\begin{equation}\label{Eq:ThetaWeightBasic1}
\begin{aligned}
\vartheta _{c_1r,s}(\alpha )\vartheta _{c_1r,s}(\beta )
&\le
\vartheta _{r,s}(\alpha +\beta )
\le
\vartheta _{c_2r,s}(\alpha )\vartheta _{c_2r,s}(\beta ),
\\[1ex]
c_1 &= \min (2^{\frac 1{2s}-1},1),\ c_2=\max (2^{\frac 1{2s}-1},1),
\end{aligned}
\end{equation}
and
\begin{equation}\label{Eq:ThetaWeightBasic2}
\begin{aligned}
\vartheta _{r,\flat _\sigma}(\alpha )\vartheta _{r,s}(\beta )
&\le
\vartheta _{r,\flat _\sigma}(\alpha +\beta )
\le
C\vartheta _{c\, r,\flat _\sigma}(\alpha )\vartheta _{c\, r,s}(\beta ),
\\[1ex]
c&=2^{\frac 1{2\sigma}},
\quad
C=
\begin{cases}
1, & |\alpha +\beta |=0,
\\[1ex]
2^{-\frac 1{2\sigma}}, & |\alpha +\beta | > 0.
\end{cases}
\end{aligned}
\end{equation}

\par

If $R\ge 0$ and in addition $s<\frac 12$, then it also holds
\begin{equation}\label{Eq:ThetaWeightBasic3}
R^{|\alpha |} \lesssim
\vartheta _{r,s}(\alpha )
\quad \text{and}\quad
R^{|\alpha |} \lesssim
\vartheta _{r,\flat _\sigma}(\alpha ),
\quad \alpha \in \nn d.
\end{equation}
\end{lemma}

\par

\begin{proof}
Let $\theta =\frac 1{2s}$. Then the inequalities
in \eqref{Eq:ThetaWeightBasic1} follows from
$$
c_1(s^\theta +t^\theta )\le (s+t)^\theta \le c_2(s^\theta +t^\theta ),
\quad
s,t\ge 0.
$$
The inequalities in \eqref{Eq:ThetaWeightBasic2} follows from
$$
\vartheta _{r,\flat _\sigma}(\alpha +\beta ) = \vartheta _{r,\flat _\sigma}(\alpha )
\vartheta _{r,\flat _\sigma}(\beta )
{{\alpha +\beta}\choose {\alpha}}^{\frac 1{2\sigma}},
$$
and that
$$
1\le {{\alpha +\beta}\choose {\alpha}} \le 2^{|\alpha +\beta |-1},
\quad \text{when}\quad
|\alpha +\beta |\ge 1.
$$

\par

The estimates in \eqref{Eq:ThetaWeightBasic3} follows from
$$
R^{|\alpha |}\lesssim \alpha !^{\frac 1{2\sigma}}
\lesssim
R^{|\alpha |^{\frac 1{2s}}}
$$
when $R>1$ and $s<\frac 12$, and the result follows.
\end{proof}

\par

\begin{defn}\label{DefSeqSpaces}
Let $p\in (0,\infty ]$, $s\in \overline {\mathbf R_{\flat ,\infty}}$, $r\in \mathbf R$,
$\vartheta$ be a map from $\nn d$ to $\mathbf R_+\bigcup \infty$,
and let $\vartheta _{r,s}$ be as in \eqref{Eq:SeqWeightDef}.
\begin{enumerate}
\item The set $\ell ^p_{[\vartheta ]}(\nn d)$ consists of
all sequences $\{ c (\alpha) \} _{\alpha \in \nn d} \subseteq \mathbf C$
such that
$$
\nm {\{ c (\alpha) \} _{\alpha \in \nn d} }{\ell ^p_{[\vartheta ]}}\equiv
\nm {\{ c (\alpha) \vartheta (\alpha )\} _{\alpha \in \nn d} }{\ell ^p} < \infty ;
$$

\vrum

\item if $s<\infty$, then
\begin{equation}\label{Eq:SeqSpaces1}
\ell _s(\nn d)\equiv \underset {r>0}\indlim \ell ^p_{[\vartheta _{r,s}]}(\nn d)
\quad \text{and}\quad
\ell _s^\star (\nn d)\equiv \underset {r>0} \projlim \ell ^p_{[1/\vartheta _{r,s}]}(\nn d)
\text ;
\end{equation}

\vrum

\item if $s>0$, then
\begin{equation}\label{Eq:SeqSpaces2}
\ell _{0,s}(\nn d)\equiv \underset {r>0}
\projlim \ell ^p_{[\vartheta _{r,s}]}(\nn d)
\quad \text{and} \quad
\ell _{0,s}^\star (\nn d)\equiv \underset {r>0}\indlim
\ell ^p_{[1/\vartheta _{r,s}]}(\nn d).
\end{equation}
\end{enumerate}
\end{defn}

\par

We observe that $\ell ^p_{[\vartheta _{r,s}]}(\nn d)$ in Definition
\ref{DefSeqSpaces} is a quasi-Banach
space under the quasi-norm $\nm \cdo{\ell ^p_{[\vartheta _{r,s}]}}$ when
$s>0$. If in addition $p\ge 1$, then $\ell ^p_{[\vartheta _{r,s}]}(\nn d)$
is a Banach space with norm $\nm \cdo{\ell ^p_{[\vartheta _{r,s}]}}$.

\par

In several situations we deal with  two-parameter version of the
spaces in Definition \ref{DefSeqSpaces}, which are denoted by
\begin{alignat}{5}
&\ell _{\maclA ,0,(s_2,s_1)}(\Lambda ), &
\quad
&\ell _{\maclA ,(s_2,s_1)} (\Lambda ), &
\quad
&\ell _{\maclA ,0,(s_2,s_1)}^\star (\Lambda ), &
\quad  & &  
& \ell _{\maclA ,(s_2,s_1)}^\star  (\Lambda ),
\label{Eq:ellAparenSpaces1to4}
\\[1ex]
&\ell _{\maclB ,0,(s_2,s_1)}(\Lambda ), &
\quad
&\ell _{\maclB ,(s_2,s_1)}(\Lambda ), &
\quad
&\ell _{\maclB ,0,(s_2,s_1)}^\star (\Lambda ), &
\quad &  & 
&\ell _{\maclB ,(s_2,s_1)}^\star (\Lambda ),
\label{Eq:ellBparenSpaces1to4}
\\[1ex]
&\ell _{\maclC ,0,(s_2,s_1)}(\Lambda ), &
\quad
&\ell _{\maclC ,(s_2,s_1)}(\Lambda ), &
\quad
&\ell _{\maclC ,0,(s_2,s_1)}^\star (\Lambda ) &
\quad &\text{and} &\quad
&\ell _{\maclC ,(s_2,s_1)}^\star (\Lambda ).
\label{Eq:ellCparenSpaces1to4}
\end{alignat}

\par

\begin{defn}\label{Def:SeqSpacesStraightMixed}
Let $\vartheta _{r,s}$ be as in \eqref{Eq:SeqWeightDef},
$s_1,s_2\in \overline{\mathbf R_{\flat ,\infty}}$,
$\Lambda =\nn {d_2}\times \nn {d_1}$, $p\in (0,\infty ]$ and let
$$
\vartheta _{r,(s_2,s_1)}(\alpha ) = \vartheta _{r,s_1}(\alpha _1)
\vartheta _{r,s_2}(\alpha _2),\quad \alpha =(\alpha _2,\alpha _1), \qquad
\alpha _1\in \nn {d_1},\ \alpha _2\in \nn {d_2}.
$$
Then the spaces in \eqref{Eq:ellAparenSpaces1to4}
are given by
\begin{alignat}{2}
\ell _{\maclA ,0,(s_2,s_1)}(\Lambda )
&\equiv \underset {r>0}\projlim
\ell ^p_{[\vartheta _{r,(s_2,s_1)}]}(\Lambda ), &
\qquad s_1,s_2 &>0,
\label{Eq:ellAparenSpaces1}
\\[1ex]
\ell _{\maclA ,(s_2,s_1)}(\Lambda )
&\equiv
\underset {r>0} \indlim \ell ^p_{[\vartheta _{r,(s_2,s_1)}]}(\Lambda ), &
\qquad s_1,s_2 &<\infty,
\label{Eq:ellAparenSpaces2}
\\[1ex]
\ell _{\maclA ,0,(s_2,s_1)}^\star (\Lambda )
&\equiv
\underset {r>0}\indlim \ell ^p_{[1/\vartheta _{r,(s_2,s_1)}]}(\Lambda ), &
\qquad s_1,s_2 &>0,
\label{Eq:ellAparenSpaces3}
\intertext{and}
\ell _{\maclA ,(s_2,s_1)}^\star(\Lambda )
&\equiv
\underset {r>0} \projlim \ell ^p_{[1/\vartheta _{r,(s_2,s_1)}]}(\Lambda ), &
\qquad s_1,s_2 &<\infty .
\label{Eq:ellAparenSpaces4}
\end{alignat}
\end{defn}

\par

\begin{defn}\label{Def:elltauSpaces2}
Let $\vartheta _{r,s}$ be as in Definition \ref{DefSeqSpaces},
$s_1,s_2\in \overline{\mathbf R_{\flat ,\infty}}$,
$\Lambda =\nn {d_2}\times \nn {d_1}$, $p\in (0,\infty ]$ and let
$$
\omega _{s_2,s_1;r_2,r_1}(\alpha )
\equiv 
\frac {\vartheta _{r_2,s_2}(\alpha _2)}{\vartheta _{r_1,s_1}(\alpha _1)},
\quad
\alpha = (\alpha _2,\alpha _1),
\qquad \alpha _1\in \nn {d_1},\ \alpha _2\in \nn {d_2}.
$$
Then the spaces in \eqref{Eq:ellBparenSpaces1to4}
are given by
\begin{alignat}{2}
\ell _{\maclB ,0,(s_2,s_1)}(\Lambda )
&= \underset {r_2>0} \projlim
\left (
\underset {r_1>0} \indlim
\left ( 
\ell _{[\omega _{s_2,s_1;r_2,r_1}]}^p (\Lambda )
\right ) \right ), &
\quad
s_1,s_2 &>0 ,
\label{Eq:ellBparenSpaces1}
\\[1ex]
\ell _{\maclB ,(s_2,s_1)}(\Lambda )
&= \underset {r_1>0} \projlim
\left (
\underset {r_2>0} \indlim
\left ( 
\ell _{[\omega _{s_2,s_1;r_2,r_1}]}^p (\Lambda )
\right ) \right ), &
\quad
s_1,s_2 &<\infty ,
\label{Eq:ellBparenSpaces2}
\\[1ex]
\ell _{\maclB ,0,(s_2,s_1)}^\star (\Lambda )
&= \underset {r_1>0} \projlim
\left (
\underset {r_2>0} \indlim
\left ( 
\ell _{[1/\omega _{s_2,s_1;r_2,r_1}]}^p (\Lambda )
\right ) \right ), &
\quad
s_1,s_2 &>0.
\label{Eq:ellBparenSpaces4}
\intertext{and}
\ell _{\maclB ,(s_2,s_1)}^\star (\Lambda )
&= \underset {r_2>0} \projlim
\left (
\underset {r_1>0} \indlim
\left ( 
\ell _{[1/\omega _{s_2,s_1;r_2,r_1}]}^p (\Lambda )
\right ) \right ), &
\quad
s_1,s_2 &<\infty .
\label{Eq:ellBparenSpaces3}
\end{alignat}
The spaces in \eqref{Eq:ellCparenSpaces1to4} for
admissible $s_1,s_2$ are given by
the right-hand sides of
\eqref{Eq:ellBparenSpaces1}--\eqref{Eq:ellBparenSpaces3},
after the orders of inductive and projective limits are swapped.
\end{defn}

\par

\begin{rem}\label{Rem:LebExpIndep}
By playing with $r$,$r_1$ and $r_2$ it follows that the topological vector spaces in
\eqref{Eq:SeqSpaces1}, \eqref{Eq:SeqSpaces2},
\eqref{Eq:ellAparenSpaces1}--\eqref{Eq:ellAparenSpaces4},
\eqref{Eq:ellBparenSpaces1to4}
and
\eqref{Eq:ellCparenSpaces1to4} are independent of
$p\in (0,\infty ]$.
\end{rem}

\par

For conveniency
we also complete the spaces in Definitions \ref{DefSeqSpaces},
\ref{Def:SeqSpacesStraightMixed} and \ref{Def:elltauSpaces2}
with the following.

\par

\begin{defn}\label{Def:CompletingSpacesEndcases}
Let $s_1,s_2\in \overline{\mathbf R_{\flat ,\infty}}$ and
$\Lambda = \nn {d_2}\times \nn {d_1}$. 
\begin{enumerate}
\item $\ell _\infty (\nn d)$ and $\ell _\infty ^\star(\nn d)$ are given by
\eqref{Eq:SeqSpaces1} with $s=\infty$ and $p=2$;

\vrum

\item if $s_1=0$ or $s_2=0$, then
\begin{align*}
\ell _{0,0}(\Lambda )
&=
\ell _{0,0}^\star (\Lambda )
=
\ell _{\maclA ,0,(s_2,s_1)}(\Lambda )
=
\ell _{\maclB ,0,(s_2,s_1)}(\Lambda )
=
\ell _{\maclC ,0,(s_2,s_1)}(\Lambda )
\\[1ex]
&=
\ell _{\maclA ,0,(s_2,s_1)}^\star(\Lambda )
=
\ell _{\maclB ,0,(s_2,s_1)}^\star (\Lambda )
=
\ell _{\maclC ,0,(s_2,s_1)}^\star (\Lambda )
\equiv
\{ 0 \} = \big \{  \{ 0 \}_{\alpha \in \Lambda}  \big \} \text ;
\end{align*}

\vrum

\item if $s_1=\infty$ or $s_2=\infty$, then
$$
\ell _{\maclA ,(s_2,s_1)}(\Lambda ),
\quad
\ell _{\maclA ,(s_2,s_1)}^\star(\Lambda ),
\quad
\ell _{\maclB ,(s_2,s_1)}(\Lambda )
\quad \text{and}\quad
\ell _{\maclB ,(s_2,s_1)}^\star (\Lambda )
$$
are defined by \eqref{Eq:ellAparenSpaces2},
\eqref{Eq:ellAparenSpaces4}, \eqref{Eq:ellBparenSpaces2}
and \eqref{Eq:ellBparenSpaces4} with $p=2$, and
$\ell _{\maclC ,s_1,s_2}(\Lambda )$ and
$\ell _{\maclC ,s_1,s_2}^\star (\Lambda )$ are defined by
\eqref{Eq:ellBparenSpaces2}
and \eqref{Eq:ellBparenSpaces4} with $p=2$ after the orders of inductive
and projective limits are swapped.
\end{enumerate}
\end{defn}

\par

\begin{rem}\label{Rem:FeaturesSeqSpaces}
By the definitions it follows that \eqref{Eq:SeqSpaces1},
\eqref{Eq:SeqSpaces2},
\eqref{Eq:ellAparenSpaces1}--\eqref{Eq:ellAparenSpaces4}
and
\eqref{Eq:ellBparenSpaces1}--\eqref{Eq:ellBparenSpaces4}
hold true with $\bigcup$ and $\bigcap$ in place of
$\indlim$ and $\projlim$, respectively, at each occurrence.

\par

We observe that the following holds true:
\begin{enumerate}
\item the space $\ell _0^\star (\nn d)$ is the set of all sequences
$\{c (\alpha) \} _{\alpha \in \nn d} \subseteq \mathbf C$ on $\nn d$, and that
$\ell _0(\nn d)$ is the set of all such sequences
such that $c (\alpha) \neq 0$
for at most finite numbers of $\alpha$.
In similar ways, the condition $s_1=0$ or $s_2=0$ impose
support restrictions of the elements in the spaces
\eqref{Eq:ellAparenSpaces1to4}--\eqref{Eq:ellCparenSpaces1to4};

\vrum

\item the spaces in
\eqref{Eq:ellAparenSpaces1to4}
are complete Hausdorff topological vector spaces,
and that $\ell _{\maclA ,0,(s_2,s_1)}(\Lambda )$ and
$\ell _{\maclA ,(s_2,s_1)}^\star(\Lambda )$
are Fr{\'e}chet spaces. It holds that
$\ell _{\maclA ,(s,s)} = \ell _s$, $\ell _{\maclA ,0,(s,s)} = \ell _{0,s}$;

\vrum

\item $\ell _0(\Lambda )$ is dense in all of the spaces in
\eqref{Eq:ellAparenSpaces1to4}--\eqref{Eq:ellCparenSpaces1to4}.
\end{enumerate}
\end{rem}

\par

\begin{rem}\label{Rem:elltauSpaces2}
Let $s_1,s_2\in \overline{\mathbf R_\flat}$ and
$\Lambda =\nn {d_2}\times \nn {d_1}$. Then it follows by straight-forward
computations that the $\ell ^2(\Lambda )$ form
$(\cdo ,\cdo )_{\ell ^2(\Lambda )}$
on $\ell _0(\Lambda )$
is uniquely extendable to continuous mappings from
\begin{equation}\label{Eq:DualPairsDiscr}
\begin{aligned}
\ell _{\maclA ,(s_2,s_1)}^\star (\Lambda )\times
\ell _{\maclA ,(s_2,s_1)}(\Lambda ),
\qquad
&\ell _{\maclC ,(s_2,s_1)}^\star (\Lambda )\times
\ell _{\maclB ,(s_2,s_1)}(\Lambda )
\\[1ex]
\text{and} \qquad
&\ell _{\maclB ,(s_2,s_1)}^\star (\Lambda )\times
\ell _{\maclC ,(s_2,s_1)}(\Lambda )
\end{aligned}
\end{equation}
to $\mathbf C$. These mappings constitutes dualities
in the sense that they are non-degenerate, i.{\,}e. if
$c_1\in \ell _{\maclA ,(s_2,s_1)}^\star (\Lambda )\setminus 0$
and
$c_2\in \ell _{\maclA ,(s_2,s_1)} (\Lambda )\setminus 0$,
then
$$
\sets {(c_1,c)_{\ell ^2}}{c\in \ell _{\maclA ,(s_2,s_1)} (\Lambda )}\neq 0
\quad \text{and}\quad
\sets {(c,c_2)_{\ell ^2}}{c\in \ell _{\maclA ,(s_2,s_1)}^\star (\Lambda )}\neq 0,
$$
and similarly for the other pairs of spaces in
\eqref{Eq:DualPairsDiscr}. We also observe that
$\ell _{\maclA ,(s_2,s_1)}^\star (\Lambda )$ and
$\ell _{\maclA ,(s_2,s_1)}(\Lambda )$ are (strong)
duals to each others, through the form
$(\cdo ,\cdo )_{\ell ^2(\Lambda )}$.
%

\par

If instead $s_1,s_2\in \mathbf R_{\flat ,\infty}$, then
the same holds true with $\ell _{\maclA ,0,(s_2,s_1)}$,
$\ell _{\maclB ,0,(s_2,s_1)}$ and $\ell _{\maclC ,0,(s_2,s_1)}$
in place of $\ell _{\maclA ,(s_2,s_1)}$, $\ell _{\maclB ,(s_2,s_1)}$
and $\ell _{\maclC ,(s_2,s_1)}$, respectively, at each occurrence.
\end{rem}

\par

Next we consider, in similar ways as in \cite{Toft18}, spaces of formal
Hermite series expansions
\begin{alignat}{2}
f&=\sum _{\alpha \in \nn d}c (f;\alpha) h_\alpha ,&\quad 
\{ c (\alpha) \}
_{\alpha \in \nn d} &\in \ell _0^\star (\nn d).\label{Hermiteseries}
\intertext{and spaces of formal power series expansions}
F&=\sum _{\alpha \in \nn d}c (F;\alpha) e_\alpha ,&\quad \{c (\alpha) \}
_{\alpha \in \nn d} &\in \ell _0^\star (\nn d).\label{Powerseries}
\end{alignat}
which correspond to
\begin{equation}\label{ellSpaces}
\ell _{0,s}(\nn d),\quad \ell _s(\nn d),
\ell _{0,s}^\star(\nn d)
\quad \text{and}\quad \quad \ell _s^\star(\nn d).
\end{equation}
Here
\begin{equation} \label{Eq:basiselements}
e_\alpha (z) \equiv \frac {z^\alpha}{\sqrt {\alpha !}},
\qquad z\in \cc d,\ \alpha \in \nn d.
\end{equation}
We consider the mappings
\begin{equation}
\label{T12Map}
T_{\maclH} : \,
\{ c (\alpha) \} _{\alpha \in \nn d} \mapsto \sum _{\alpha \in \nn d}
c (\alpha) h_\alpha
\quad \text{and}\quad
T_{\maclA} : \,
\{ c (\alpha) \} _{\alpha \in \nn d} \mapsto \sum _{\alpha \in \nn d}
c (\alpha) e_\alpha
\end{equation}
between sequences, and formal Hermite series and power series expansions.

\par

\begin{defn}\label{DefclHclASpaces}
If $s\in \overline{\mathbf R_{\flat ,\infty}}$, then
\begin{alignat}{2}
\maclH _{0,s}(\rr d),\quad \maclH _s(\rr d),
\quad
\maclH _{0,s}^\star(\rr d)
\quad \text{and}\quad \maclH _s^\star(\rr d),
\label{clHSpaces}
\intertext{and}
\maclA _{0,s}(\cc d),\quad \maclA _s(\cc d),
\quad
\maclA _{0,s}^\star(\cc d)
\quad \text{and}\quad 
\maclA _s^\star(\cc d),
\label{clASpaces}
\end{alignat}
are the images of $T_{\maclH}$ and $T_{\maclA}$ respectively in
\eqref{T12Map} of corresponding spaces in \eqref{ellSpaces}.
The topologies of the spaces in \eqref{clHSpaces} and \eqref{clASpaces}
are inherited from the corresponding spaces in \eqref{ellSpaces}.
\end{defn}

\par

By Remark \ref{Rem:elltauSpaces2} it follows that the latter spaces in
\eqref{clHSpaces} are the (strong) duals of
the former spaces with respect to unique extensions of the form
$(\cdo ,\cdo )_{L^2}$ on $\maclH _0(\rr d)$. That is,
\begin{equation}\label{Eq:DualPilSp}
\maclH _s'(\rr d) = \maclH _s^\star (\rr d)
\quad \text{and}\quad
\maclH _{0,s} '(\rr d) = \maclH _{0,s} ^\star (\rr d).
\end{equation}
In the same way as in \eqref{clASpaces} we let
\begin{alignat}{5}
&\maclA _{0,(s_2,s_1)}(W), &
\quad
&\maclA _{(s_2,s_1)}(W), &
\quad
&\maclA _{0,(s_2,s_1)}^\star(W) &
\quad &\text{and} &\quad
&\maclA _{(s_2,s_1)}^\star(W)
\tag*{(\ref{clASpaces})$'$}
\end{alignat}
be the images of the spaces \eqref{Eq:ellAparenSpaces1to4}
under the map $T_{\maclA}$
when $W=\cc {d_2}\times \cc {d_1}$, $\Lambda =\nn {d_2}\times \nn {d_1}$
and $s_1,s_2\in \overline{\mathbf R_{\flat ,\infty}}$.

\par

Since locally absolutely convergent power series expansions
can be identified with
entire functions, several of the spaces in \eqref{clASpaces} are identified
with topological vector spaces contained in $A(\cc d)$ or $A(W)$.
(See \cite{Toft18}, Theorem \ref{Thm:AnalSpacesChar} in Appendix
\ref{App:A} and the introduction.)
Here $A(\Omega )$
is the set of all (complex valued) functions which are analytic in
the open set $\Omega \subseteq \cc d$. If $\Omega _0\subseteq \cc d$
is arbitrary, then $A(\Omega _0)=\cup A(\Omega )$, where the union is taken
over all open sets $\Omega \subseteq \cc d$ such that
$\Omega _0\subseteq \Omega$.

\par

We recall that $f\in \mascS (\rr d)$ if and only if
\eqref{Hermiteseries} holds with
$$
|c (\alpha) |\lesssim \eabs \alpha ^{-N},
$$
for every $N\ge 0$.
That is, $\mascS (\rr d)=\maclH _{0,\infty}(\rr d)$. In the same way,
$\mascS '(\rr d)=\maclH _{0,\infty}^\star(\rr d)$, and $f\in L^2(\rr d)$,
if and only if $\{ c(\alpha )\} _{\alpha \in \nn d}\in \ell ^2(\nn d)$
(cf. e.{\,}g. \cite{RS}). In particular it follows from the definitions that the
inclusions
\begin{align}
\maclH _0(\rr d) &\hookrightarrow \maclH _{0,s}(\rr d)\hookrightarrow
\maclH _{s}(\rr d) \hookrightarrow \maclH _{0,t}(\rr d)
\notag
\\[1ex]
&\hookrightarrow
\maclH _{0,t}^\star(\rr d)
\hookrightarrow \maclH _{s}^\star(\rr d)
\hookrightarrow \maclH _{0,s}^\star(\rr d)\hookrightarrow \maclH _0^\star(\rr d),
\quad
s,t\in \mathbf R_{\flat ,\infty} ,\ s<t,
\label{inclHermExpSpaces}
\end{align}
are dense.

\par

\begin{rem}\label{Rem:LinkEllHs}
By the definition it follows that $T_{\maclH}$ in \eqref{T12Map} is a homeomorphism
between any of the spaces in \eqref{ellSpaces} and corresponding space
in \eqref{clHSpaces}, and that $T_{\maclA}$ in \eqref{T12Map} is a homeomorphism
between any of the spaces in \eqref{ellSpaces} and corresponding space
in \eqref{clASpaces}.
\end{rem}

\par

The next results give some characterizations of $\maclH _s(\rr d)$ and
$\maclH _{0,s}(\rr d)$ when $s$ is a non-negative real number. See also
\cite{AbFeGaToUs} for similar characterizations of $\maclH _{\flat _\sigma}(\rr d)$
and $\maclH _{0,\flat _\sigma}(\rr d)$.

\par

\begin{prop}\label{Prop:PilSpacChar1}
Let $0\le s,s_1,s_2\in \mathbf R$ and let $f\in \maclH _0^\star(\rr d)$.
Then $f\in \maclH _s(\rr d)$ ($f\in \maclH _{0,s}(\rr d)$), if and only if
$f\in C^\infty (\rr d)$ and satisfies
\begin{equation}\label{GFHarmCond}
\nm{H_d^Nf}{L^\infty}\lesssim h^NN!^{2s},
\end{equation}
for some (every) $h>0$. If $s_1<\frac 12$ and
$s_2\ge \frac 12$ ($0<s_1\le \frac 12$ and $s_2> \frac 12$), then
\begin{alignat*}{2}
\maclH _{s_1}(\rr d) &\neq \maclS _{s_1}(\rr d) = \{ 0\}, &
\quad
\big (
\maclH _{0,s_1}(\rr d) &\neq \Sigma _{s_1}(\rr d) \neq \{ 0\}
\big ),
\\[1ex]
\maclH _{s_2}(\rr d) &= \maclS _{s_2}(\rr d) \neq \{ 0\} &
\quad \text{and}\quad
\big (
\maclH _{0,s_2}(\rr d) &= \Sigma _{s_2}(\rr d) \neq \{ 0\} 
\big ) .
\end{alignat*}
\end{prop}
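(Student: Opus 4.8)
\emph{The plan is to reduce everything to the Hermite-coefficient characterization of the spaces $\maclH _s$ and $\maclH _{0,s}$, which is available through the homeomorphism $T_{\maclH}$ of Remark \ref{Rem:LinkEllHs}.} First I would recall that, by Definition \ref{DefclHclASpaces} together with \eqref{Eq:SeqSpaces1}--\eqref{Eq:SeqSpaces2}, a distribution $f=\sum _\alpha c(\alpha )h_\alpha \in \maclH _0'(\rr d)$ lies in $\maclH _s(\rr d)$ (resp.\ $\maclH _{0,s}(\rr d)$) precisely when $\{c(\alpha )\}$ belongs to $\ell _s(\nn d)$ (resp.\ $\ell _{0,s}(\nn d)$); for $s\in \mathbf R_+$ this means $|c(\alpha )|\lesssim e^{-r|\alpha |^{1/(2s)}}$ for some $r>0$ (resp.\ every $r>0$). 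The crucial analytic input is the action of the harmonic oscillator on the basis: since $H_dh_\alpha =(2|\alpha |+d)h_\alpha$, one has
\begin{equation*}
H_d^Nf=\sum _{\alpha \in \nn d}(2|\alpha |+d)^N c(\alpha )h_\alpha ,
\end{equation*}
so all the analysis is transferred to estimating the scalar sequence $(2|\alpha |+d)^N|c(\alpha )|$.

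\emph{Next I would establish the equivalence between \eqref{GFHarmCond} and the coefficient decay.} For the direction from coefficient decay to \eqref{GFHarmCond}, I would bound $\nm{H_d^Nf}{L^\infty}$ crudely by a sum, using $\nm{h_\alpha }{L^\infty}\lesssim \eabs \alpha ^{M}$ for a fixed $M$ (a standard Hermite-function bound) together with the decay $|c(\alpha )|\lesssim e^{-r|\alpha |^{1/(2s)}}$; the geometric-type gain then yields $\nm{H_d^Nf}{L^\infty}\lesssim \sup _\alpha (2|\alpha |+d)^N e^{-\frac r2|\alpha |^{1/(2s)}}$, and optimizing the right-hand side over $|\alpha |$ produces exactly a bound of the form $h^NN!^{2s}$ by the elementary inequality $\sup _{t\ge 0}t^N e^{-ct^{1/(2s)}}\lesssim (C/c)^{2sN}N!^{2s}$. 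For the converse, from \eqref{GFHarmCond} I would read off $(2|\alpha |+d)^N|c(\alpha )|\le \nm{H_d^Nf}{L^2}\le \nm{H_d^Nf}{L^\infty}\lesssim h^NN!^{2s}$ for each $N$, and then optimize over $N$: choosing $N$ comparable to $|\alpha |^{1/(2s)}$ and invoking Stirling converts $\inf _N h^NN!^{2s}(2|\alpha |+d)^{-N}$ into a bound of the form $e^{-r|\alpha |^{1/(2s)}}$, which is the membership criterion for $\ell _s$ (resp.\ $\ell _{0,s}$, tracking whether ``some $h$'' or ``every $h$'' is in force).

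\emph{For the last two displays I would simply combine this characterization with already-cited identifications.} By the result of Pilipovi{\'c} quoted in the introduction, $\maclH _{s}(\rr d)=\maclS _s(\rr d)$ for $s\ge \frac 12$ and $\maclH _{0,s}(\rr d)=\Sigma _s(\rr d)$ for $s>\frac 12$, while the non-triviality statement $\maclS _s\neq \{0\}\Leftrightarrow s\ge \frac 12$ and $\Sigma _s\neq \{0\}\Leftrightarrow s>\frac 12$ was recorded in Subsection \ref{subsec1.2}. Thus for $s_2\ge \frac 12$ (resp.\ $s_2>\frac 12$) the equalities $\maclH _{s_2}=\maclS _{s_2}\neq \{0\}$ (resp.\ $\maclH _{0,s_2}=\Sigma _{s_2}\neq \{0\}$) are immediate, and for $s_1<\frac 12$ the Gelfand-Shilov space $\maclS _s=\{0\}$ while $\maclH _{s_1}$ is visibly nonzero (it contains every finite Hermite combination), giving $\maclH _{s_1}\neq \maclS _s=\{0\}$; the Beurling case is analogous.

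\emph{The main obstacle} I anticipate is the uniform-in-$N$ optimization in the converse direction: I must pass from the family of estimates \eqref{GFHarmCond}, one for each $N$, to a \emph{single} subexponential bound on the coefficients, and do so while faithfully distinguishing the Roumieu quantifier (``for some $h$'') from the Beurling quantifier (``for every $h$''). Getting the constants to track correctly through the Stirling optimization so that ``some $h$'' yields ``some $r$'' and ``every $h$'' yields ``every $r$'' is where the bookkeeping is delicate; the embedding $\nm{\cdot }{L^2}\le \nm{\cdot }{L^\infty}$ used to extract the pointwise coefficient bound is harmless but must be justified on $\rr d$ via the $L^2$-orthonormality of the $h_\alpha$.
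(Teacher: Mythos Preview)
The paper does not actually prove this proposition; immediately after the statement it says ``We refer to \cite{Toft18} for the proof of Proposition \ref{Prop:PilSpacChar1}.'' So there is no in-paper argument to compare against, and your outline is in the spirit of how such results are proved in \cite{Pil,Toft18}: reduce to Hermite coefficients via $H_dh_\alpha =(2|\alpha |+d)h_\alpha$ and optimize over $N$ using Stirling.

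That said, there is a genuine gap in your converse direction. You write
\[
(2|\alpha |+d)^N|c(\alpha )|\le \nm{H_d^Nf}{L^2}\le \nm{H_d^Nf}{L^\infty},
\]
and later call the step $\nm{\,\cdot\,}{L^2}\le \nm{\,\cdot\,}{L^\infty}$ ``harmless''. On $\rr d$ with Lebesgue measure this inequality is simply false, and orthonormality of the $h_\alpha$ does nothing to rescue it. What does work is to pair directly:
\[
(2|\alpha |+d)^N|c(\alpha )|=|(H_d^Nf,h_\alpha )_{L^2}|\le \nm{H_d^Nf}{L^\infty}\nm{h_\alpha}{L^1},
\]
and then use a polynomial bound $\nm{h_\alpha}{L^1}\lesssim \eabs \alpha ^M$ (of the same type you already invoke for $\nm{h_\alpha}{L^\infty}$). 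The extra factor $\eabs \alpha ^M$ is absorbed in the Stirling optimization exactly as in your forward direction, and the Roumieu/Beurling bookkeeping goes through unchanged. With this correction your sketch is sound; without it the argument as written does not close.
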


\par

We refer to \cite{Toft18} for the proof of Proposition \ref{Prop:PilSpacChar1}.

\par

Due to the pioneering investigations related
to Proposition \ref{Prop:PilSpacChar1} by Pilipovi{\'c} in
\cite{Pil}, we call the spaces $\maclH _s(\rr d)$ and $\maclH _{0,s}(\rr d)$
\emph{Pilipovi{\'c} spaces of Roumieu and Beurling types}, respectively.
In fact, in the restricted case $s,s_1,s_2\ge \frac 12$,
Proposition \ref{Prop:PilSpacChar1} was proved
already in \cite{Pil}.

\medspace

In what follows we let $F(z_2,\overline z_1)$ be
the formal power series
\begin{equation}\label{Eq:FormalConjPowerSeries}
\sum
c(\alpha _2,\alpha _1)e_{\alpha _2}(z_2)e_{\alpha _1}(\overline z_1),
\quad z_j \in \cc {d_j}, j=1,2, 
\end{equation}
when $F(z_2,z_1)$ is the formal power series
\begin{equation}\label{Eq:FormalNonConjPowerSeries}
\sum
c(\alpha _2,\alpha _1)e_{\alpha _2}(z_2)e_{\alpha _1}(z_1).
\end{equation}
Here $z_j \in \cc {d_j}$, $j=1,2$, and the sums should be taken over all
$(\alpha _2,\alpha _1)\in \nn {d_2}\times \nn {d_1}$.

\par

\begin{defn}\label{Def:tauSpaces}
Let $s,s_1,s_2\in \overline{\mathbf R_{\flat ,\infty}}$,
$\Theta _{C}$, $W=\cc {d_2}\times \cc {d_1}$ and be the operator
\begin{equation}\label{Eq:ThetaCOp}
(\Theta _{C}F)(z_2,z_1)=F(z_2,\overline z_1)
\end{equation}
between formal power series in \eqref{Eq:FormalConjPowerSeries}
and \eqref{Eq:FormalNonConjPowerSeries}, $z_j \in \cc {d_j}$, $j=1,2$.
Then
\begin{alignat}{4}
&\wideparen \maclA _{0,s}(W), &
\quad
&\wideparen \maclA _s(W), &
\quad
&\wideparen \maclA _s^\star(W), &
\quad
&\wideparen \maclA _{0,s}^\star(W)
\label{Eq:SesAnalSp}
\intertext{and}
&\wideparen \maclA _{0,(s_2,s_1)}(W), &
\quad
&\wideparen \maclA _{(s_2,s_1)}(W), &
\quad
&\wideparen \maclA _{(s_2,s_1)}^\star(W), &
\quad
&\wideparen \maclA _{0,(s_2,s_1)}^\star(W)
\label{Eq:SesAnalSp2}
\end{alignat}
are the images of \eqref{clASpaces} respectively \eqref{clASpaces}$'$
under $\Theta _{C}$, and $\wideparen A (W)$
is the image of $A(W)$
under $\Theta _{C}$.
The topologies of the spaces in \eqref{Eq:SesAnalSp}, \eqref{Eq:SesAnalSp2}
and $\wideparen A (W)$
are inherited from the topologies
of the spaces \eqref{clASpaces}, \eqref{clASpaces}$'$ and $A(W)$
respectively, through the map $\Theta _{C}$.
\end{defn}

\par

\begin{rem}\label{Rem:SpaceSpecCase}
Let $W=\cc {d_2}\times \cc {d_1}$. By letting $d_2=d$ and $d_1=0$,
it follows that $A(\cc d)$ and the spaces in \eqref{clASpaces} can be
considered as special cases of $\wideparen A (W)$ and the spaces
in \eqref{Eq:SesAnalSp}.

\par

Since $\maclA _{\flat _1}^\star(\cc d) = A(\cc d)$ and
$\maclA _{0,\flat _1}^\star(\cc d)
= A_d(\{ 0\} )$, it follows that
\begin{equation}\label{Eq:AnalIdents}
\begin{aligned}
\wideparen
\maclA _{\flat _1}^\star(W)
=
\wideparen A(W)
\quad \text{and}\quad
\wideparen
\maclA _{0,\flat _1}^\star(W)
=
\wideparen A _{d_2,d_1}(\{ 0 \} ).
\end{aligned}
\end{equation}
\end{rem}

\par

The subspaces of $\wideparen A(W)$ in the following definition
are important when considering analytic kernel operators which are
continuous on the spaces in \eqref{clASpaces}.

\par

\begin{defn}\label{Def:tauSpaces2}
Let $s_1,s_2\in \overline{\mathbf R_{\flat ,\infty}}$,
$W=\cc {d_2}\times \cc {d_1}$, $T_{\maclA}$ and $\Theta _C$
be given by \eqref{T12Map} and \eqref{Eq:ThetaCOp}. Then
\begin{alignat}{5}
&\wideparen \maclB _{s}(W), &
\quad
&\wideparen \maclB _{0,s}(W), &
\quad
&\wideparen \maclB _{s}^\star (W), &
\quad 
&\wideparen \maclB _{0,s}^\star (W), &
\quad
s&=(s_2,s_1),
\label{Eq:BparenSpaces}
\intertext{and}
&\wideparen \maclC _{s}(W), &
\quad
&\wideparen \maclC _{0,s}(W), &
\quad
&\wideparen \maclC _{s}^\star (W), &
\quad 
&\wideparen \maclC _{0,s}^\star (W), &
\quad
s&=(s_2,s_1),
\label{Eq:CparenSpaces}
\end{alignat}
are the images of the spaces in \eqref{Eq:ellBparenSpaces1to4}
and \eqref{Eq:ellCparenSpaces1to4} under the
map $\Theta _C\circ T_{\maclA}$.
The topologies of the spaces in \eqref{Eq:BparenSpaces} and
\eqref{Eq:CparenSpaces} are inherited from the topologies of the
corresponding spaces in \eqref{Eq:ellBparenSpaces1to4}
and \eqref{Eq:ellCparenSpaces1to4}.
\end{defn}

\par

For conveniency we set
\begin{alignat*}{4}
\ell _{\maclB ,s} &= \ell _{\maclB ,(s,s)}, &
\quad
\ell _{\maclB ,0,s} &= \ell _{\maclB ,0,(s,s)}, &
\quad
\ell _{\maclC ,s} &= \ell _{\maclC ,(s,s)}, &
\quad
\ell _{\maclC ,0,s} &= \ell _{\maclC ,0,(s,s)},
\\[1ex]
\wideparen \maclB _s &= \wideparen \maclB _{(s,s)}, &
\quad
\wideparen \maclB _{0,s} &= \wideparen \maclB _{0,(s,s)}, &
\quad
\wideparen \maclC _s &= \wideparen \maclC _{(s,s)} &
\quad \text{and}\quad
\wideparen \maclC _{0,s} &= \wideparen \maclC _{0,(s,s)}.
\end{alignat*}
In particular, if $W=\cc d\times \cc d\simeq \cc {2d}$, then the
spaces in \eqref{Eq:BparenSpaces} and \eqref{Eq:CparenSpaces}
becomes
\begin{equation}\tag*{(\ref{Eq:BparenSpaces})$'$}
\wideparen \maclB _{s}(\cc {2d}),
\quad
\wideparen \maclB _{0,s}(\cc {2d}),
\quad
\wideparen \maclB _{s}^\star (\cc {2d}),
\quad 
\wideparen \maclB _{0,s}^\star (\cc {2d})
\end{equation}
and
\begin{equation}\tag*{(\ref{Eq:CparenSpaces})$'$}
\wideparen \maclC _{s}(\cc {2d}),
\quad
\wideparen \maclC _{0,s}(\cc {2d}),
\quad
\wideparen \maclC _{s}^\star (\cc {2d}),
\quad 
\wideparen \maclC _{0,s}^\star (\cc {2d}).
\end{equation}

\par

In Proposition \ref{Prop:DualtauSpaces} in Section \ref{sec2} we present some
duality properties of the spaces in Definitions \ref{Def:tauSpaces}
and \ref{Def:tauSpaces2}.
In Appendix \ref{App:A} we identify the spaces in \eqref{Eq:BparenSpaces}
and \eqref{Eq:CparenSpaces} with convenient subspaces of
$\wideparen A(W)$, for suitable $s_1,s_2\in \overline {\mathbf R_\flat}$.

\par

\subsection{The Bargmann transform and spaces of analytic
functions}\label{subsec1.5}

\par

Let $p\in [1,\infty ]$. Then the Bargmann transform $\mathfrak V_df$ of
$f\in L^p(\rr d)$ is the entire function given by
\begin{equation*}
(\mathfrak V_df)(z) =\pi ^{-\frac d4}\int _{\rr d}\exp \Big ( -\frac 12(\scal
z z+|y|^2)+2^{1/2}\scal zy \Big )f(y)\, dy,\quad z \in \cc d.
\end{equation*}
We have
$$
(\mathfrak V_df)(z) =\int_{\rr d} \mathfrak A_d(z,y)f(y)\, dy,
\quad z \in \cc d,
$$
or
\begin{equation}\label{bargdistrform}
(\mathfrak V_df)(z) =\scal f{\mathfrak A_d(z,\cdo )},
\quad z \in \cc d,
\end{equation}
where the Bargmann kernel $\mathfrak A_d$ is given by
$$
\mathfrak A_d(z,y)=\pi ^{-\frac d4} \exp \Big ( -\frac 12(\scal
zz+|y|^2)+2^{1/2}\scal zy\Big ), \quad z \in \cc d, y \in \rr d.
$$
(Cf. \cite{B1,B2}.)
Here
$$
\scal zw = \sum _{j=1}^dz_jw_j\quad \text{and} \quad
(z,w)= \scal z{\overline w}
$$
when
$$
z=(z_1,\dots ,z_d) \in \cc d\quad  \text{and} \quad w=(w_1,\dots ,w_d)\in \cc d,
$$
and otherwise $\scal \cdo \cdo $ denotes the duality between test function
spaces and their corresponding duals which is clear from the context.
We note that the right-hand side of \eqref{bargdistrform} makes sense
when $f\in \maclS _{1/2}'(\rr d)$ and defines an element in $A(\cc d)$,
since $y\mapsto \mathfrak A_d(z,y)$ can be interpreted as an element
in $\maclS _{1/2} (\rr d)$ with values in $A(\cc d)$.

\par

It was proved by Bargmann in \cite{B1} that $f\mapsto \mathfrak V_df$ is a bijective
and isometric map from $L^2(\rr d)$ to the Hilbert space $A^2(\cc d)$,
the set of entire functions $F$ on $\cc d$ which fullfils
\begin{equation}\label{A2norm}
\nm F{A^2}\equiv \Big ( \int _{\cc d}|F(z)|^2d\mu (z)  \Big )^{1/2}<\infty .
\end{equation}
Here $d\mu (z)=\pi ^{-d} e^{-|z|^2}\, d\lambda (z)$, where $d\lambda (z)$ is
the Lebesgue measure on $\cc d$. The scalar product on
$A^2(\cc d)$ is given by
\begin{equation}\label{A2scalar}
(F,G)_{A^2}\equiv  \int _{\cc d} F(z)\overline {G(z)}\, d\mu (z),\quad F,G\in A^2(\cc d).
\end{equation}
For future references we note that the latter scalar product induces the bilinear form
\begin{equation}\label{A2scalarBil}
(F,G)\mapsto \scal FG _{A^2}=\scal FG _{A^2(\cc d)}\equiv
\int _{\cc d} F(z)G(z)\, d\mu (z)
\end{equation}
on $A^2(\cc d)\times \overline{A^2(\cc d)}$.

\par

In \cite{B1} it was proved that the Bargmann transform maps the
Hermite functions to monomials as
\begin{equation}\label{Eq:BargmannHermiteMap}
\mathfrak V_dh_\alpha = e_\alpha ,
\quad
z\in \cc d,\quad \alpha \in \nn d
\end{equation}
(cf. \eqref{Eq:basiselements}). The orthonormal basis
$\{ h_\alpha \}_{\alpha \in \nn d} \subseteq L^2(\rr d)$ 
is thus mapped to the orthonormal basis
$\{ e_\alpha \} _{\alpha \in \nn d}\subseteq A^2(\cc d)$. 

\par

In particular it follows that the definition of the Bargmann transform
from $\maclH _0(\rr d)$ to $A^2(\cc d)$ is uniquely extendable
to a homeomorphism from $\maclH _0^\star(\rr d)=\maclH _0'(\rr d)$
to $\maclA _0^\star(\cc d)$,
by letting
\begin{equation}\label{Eq:GeneralBargmTrans}
\mathfrak V_d=T_{\maclA}\circ T_{\maclH}^{-1},
\end{equation}
where
$T_{\maclA}$ and $T_{\maclH}$, where $T_{\maclH}$ and
$T_{\maclA}$ are given by \eqref{T12Map}.
From the definitions of
$\maclH _s(\rr d)$ to $\maclA _s(\cc d)$ and their duals it follows that
the Bargmann transform restricts to homeomorphisms from
$\maclH _s(\rr d)$ to $\maclA _s(\cc d)$ and from
$\maclH _s^\star(\rr d)$ to $\maclA _s^\star (\cc d)$. Similar facts hold true with
$\maclH _{0,s}$ and $\maclA _{0,s}$ in place of
$\maclH _{s}$ and $\maclA _{s}$, respectively, at each occurrence.
(Cf. \cite{Toft18}.)
In particular, Remark \ref{Rem:elltauSpaces2}, \eqref{Eq:DualAnalPilSp}
and \eqref{Eq:BargmannHermiteMap}
show that the latter spaces in
\eqref{clASpaces} are the (strong) duals of
the former spaces with respect to unique extensions of the form
$(\cdo ,\cdo )_{A^2}$ on $\maclA _0(\cc d)$. That is,
\begin{equation}\label{Eq:DualAnalPilSp}
\maclA _s'(\cc d) = \maclA _s^\star (\cc d)
\quad \text{and}\quad
\maclA _{0,s} '(\cc d) = \maclA _{0,s} ^\star (\cc d).
\end{equation}

\par

Bargmann also proved that there is a reproducing formula for
$A^2(\cc d)$. In fact, let $\Pi _A$ be the operator from $L^2(d\mu )$
to $A(\cc d)$, given by
\begin{equation}\label{reproducing}
(\Pi _AF)(z)= \int _{\cc d} F(w)e^{(z,w)}\, d\mu (w),\quad z \in \cc d.
\end{equation}
 Then it is proved in \cite{B1} that $\Pi _A$ is an orthonormal
projection from $L^2(d\mu)$ to $A^2(\cc d)$.

\par

From now on we assume that $\phi$ in short-time
Fourier transforms \eqref{Eq:STFT} is given by
\begin{equation}\label{phidef}
\phi (x)=\pi ^{-\frac d4}e^{-|x|^2/2}, \quad x\in \rr d,
\end{equation}
if nothing else is stated. For such $\phi$, it follows by straight-forward
computations that the relationship between
the Bargmann transform and the short-time Fourier transform
is given by
\begin{equation}\label{bargstft1}
\mathfrak V_d = U_{\mathfrak V}\circ V_\phi ,\quad \text{and}\quad
U_{\mathfrak V}^{-1} \circ \mathfrak V_d =  V_\phi ,
\end{equation}
where $U_{\mathfrak V}$ is the linear, continuous and bijective operator from
$\mathscr D'(\rr {2d})$ to $\mathscr D'(\cc d)$, given by
\begin{alignat}{2}
(U_{\mathfrak V}F)(x+i\xi ) &= (2\pi )^{\frac d2} e^{\frac 12(|x|^2+|\xi |^2)}e^{-i\scal x\xi}
F(\sqrt 2\, x,-\sqrt 2\, \xi ),& \quad x,\xi &\in \rr d.
\label{UVdef}
\intertext{We notice that the inverse of $U_{\mathfrak V}$ is given by}
(U_{\mathfrak V}^{-1}F)(x,\xi ) &= (2\pi )^{-\frac d2} e^{-\frac 14(|x|^2+|\xi |^2)}
e^{-\frac i2\scal x\xi} F\left (\frac {x-i\xi }{\sqrt 2} \right ), & \quad x,\xi &\in \rr d
\label{Eq:UVdefInv}
\end{alignat}
(cf. \cite{To11}).

\medspace

If $W=\cc {d_2}\times \cc {d_1}$, then $\wideparen A^2(W)$ consists
of all $K\in C(W)$ such that $(z_2,z_1)\mapsto K(z_2,\overline z_1)$
belongs to $A^2(W)$. Then $\wideparen A^2(W)$ is a Hilbert
space under the scalar product
\begin{gather}
(K_1,K_2)_{\wideparen A^2} =(K_1,K_2)_{\wideparen A^2(W)}
\equiv
(K_{0,1},K_{0,2})_{A^2(W)},
\label{Eq:WideparA2Def1}
\intertext{when}
K_j\in \wideparen A^2(W),\quad
K_{0,j}(z_2,z_1)= K_j(z_2,\overline z_1),\quad j=1,2.
\label{Eq:WideparA2Def2}
\end{gather}

\par

\subsection{Wick and anti-Wick operators}\label{subsec1.6}

\par

Next we recall the definition of analytic
pseudo-differential operators, so-called Wick operators. Suppose that
$a(z,w)\in \wideparen A(\cc {2d})$
and $F\in A(\cc d)$ satisfy 
\begin{equation}\label{Eq:AntiWickL1Cond}
w\mapsto a(z,w)F(w)e^{r|w|-|w|^2} \in L^1(\cc d)
\end{equation}
for every $r>0$ and $z\in \cc d$. Then the \emph{analytic
pseudo-differential operator}, or \emph{Wick operator}
$\op _{\mathfrak V}(a)$ with symbol $a$, is the linear and
continuous operator from $\maclA _0(\cc d)$ to $A(\cc d)$,
defined by the formula
\begin{align}
\op _{\mathfrak V}(a)F(z) &= \int _{\cc d} a(z,w)F(w)e^{(z,w)}\, d\mu (w),
\quad z \in \cc d,
\label{Eq:AnalPseudo}
\end{align}
when $F\in \maclA _0(\cc d)$.
(Cf. e.{\,}g. \cite{Berezin71,Fo,Teofanov2,To11,Toft18}.) Here we remark
that $\op _{\mathfrak V}(a)F$ is extendable in several ways, allowing
$a$ and $F$ to belong to different spaces
where \eqref{Eq:AntiWickL1Cond} is violated. For example, in
\cite{Teofanov2} it is proved that the definition of $\op _{\mathfrak V}(a)$
is uniquely extendable to any $a\in \wideparen \maclA _0'(\cc {2d})$,
and then $\op _{\mathfrak V}(a)$ is continuous from $\maclA _0(\cc d)$
to $\maclA _0'(\cc d)$.

\par

The definition of the Wick operator in \eqref{Eq:AnalPseudo} resembles
on the definition of the classical Kohn-Nirenberg pseudo-differential operators
on $\rr d$. Let $A$ be a real $d\times d$ matrix.
Then the \emph{pseudo-differential operator}
$\op _A(\fka )$ with \emph{symbol}
$\fka \in \maclS _{1/2} (\rr {2d})$ is the linear and continuous operator on
$\maclS _{1/2} (\rr d)$, given by
$$
(\op _A(\fka )f)(x)
=
(2\pi  ) ^{-d}\iint \fka (x-A(x-y),\xi )
f(y)e^{i\scal {x-y}\xi }\,
dyd\xi, \quad x\in \rr d.
$$
The definition of $\op _A(\fka )$ extends to any $\fka \in \maclS _{1/2}'(\rr {2d})$,
and then $\op _A(\fka )$ is continuous from $\maclS _{1/2}(\rr d)$ to $\maclS _{1/2}'(\rr d)$.
(See e.{\,}g. \cite{Ho1,Toft17} and the references therein.)

\par

The normal (Kohn-Nirenberg) representation and the Weyl quantization
are obtained by choosing $A=0$ and $A=\frac 12I$ respectively,
where $I=I_d$ is the $d\times d$ identity matrix.

\par

In the literature it is also
common to consider anti-Wick operators. Suppose that
$a\in \wideparen A(\cc {2d})$ satisfies
\begin{equation}\label{Eq:AntiWickL1Cond2}
w\mapsto a(w,w)e^{r|w|-|w|^2} \in L^1(\cc d)
\end{equation}
for every $r>0$. Then the \emph{anti-Wick operator}
$\op _{\mathfrak V}^{\aw}(a)$ with symbol $a$ is the linear and continuous
operator from $\maclA _0(\cc d)$ to $A(\cc d)$, given by
\begin{align}
\op _{\mathfrak V}^{\aw}(a)F(z) &= \int _{\cc d} a(w,w)F(w)e^{(z,w)}\, d\mu (w),
\quad z \in \cc d,
\label{Eq:AntiWick}
\end{align}
when $F\in \maclA _0(\cc d)$.

\par

Berezin established in \cite{Berezin71}
convenient links between Wick and anti-Wick operators
(see page 587 in \cite{Berezin71}).
More precisely, if $a\in \wideparen \maclA _0(\cc {2d})$, then
\begin{align}
\op _{\mathfrak V}^{\aw}(a) &= \op _{\mathfrak V}(a^{\aw})
\label{Eq:AntiWickAnalPseudoRel}
\intertext{when $a^{\aw}$ is given by}
a^{\aw} (z,w) &= \pi ^{-d} \int _{\cc d}a(w_1,w_1)e^{-(z-w_1,w-w_1)}
\, d\lambda (w_1).
\label{Eq:AntiWickAnalPseudoRelIntForm}
\intertext{If instead $a^{\aw}\in \wideparen \maclA _0(\cc {2d})$,
then \eqref{Eq:AntiWickAnalPseudoRel} holds true when
$a^{\aw}\in \wideparen A(\cc {2d})$ is given by}
a(z,-w) &= \pi ^{-d} \int _{\cc d}a^{\aw}(w_1,-w_1)e^{-(z-w_1,w-w_1)}\, d\lambda (w_1).
\label{Eq:AntiWickAnalPseudoRelIntForm2}
\end{align}
In fact, if $F\in \maclA _0(\cc d)$ and $a^{\aw}$ is given by 
\eqref{Eq:AntiWickAnalPseudoRelIntForm}, then
$$
\op _{\mathfrak V}(a^{\aw})F(z)
=
\pi ^{-d}\int _{\cc {d}} a(w_1,w_1)e^{(z,w_1)-|w_1|^2}
\left (
\int _{\cc d}F(w)e^{(w_1,w)}\, d\mu (w)
\right )
\, d\lambda (w_1),
$$
and by applying the reproducing formula \eqref{reproducing}
on the inner integral, it follows that the right-hand side becomes
$\op _{\mathfrak V}^{\aw}(a)F(z)$ in \eqref{Eq:AntiWick}.
This shows that \eqref{Eq:AntiWickAnalPseudoRel} holds true
when $a^{\aw}$ is given by \eqref{Eq:AntiWickAnalPseudoRelIntForm}.
In Section \ref{sec3} and \ref{sec4} we show the equivalence between
\eqref{Eq:AntiWickAnalPseudoRelIntForm}
and \eqref{Eq:AntiWickAnalPseudoRelIntForm2}, when $a$
or $a^{\aw}$ are allowed to belong to larger classes than $\wideparen
\maclA _0(\cc {2d})$.

\par

Since any element $a(z,w)$
in $\wideparen A(\cc {2d})$ is uniquely determined on its
values at the diagonal $z=w$ or anti-diagonal $z=-w$, it
follows that \eqref{Eq:AntiWickAnalPseudoRelIntForm} and
\eqref{Eq:AntiWickAnalPseudoRelIntForm2} are equivalent to
\begin{align}
a^{\aw} (z,z) &= \pi ^{-d} \int _{\cc d}a(w_1,w_1)e^{-|z-w_1|^2}
\, d\lambda (w_1)
\tag*{(\ref{Eq:AntiWickAnalPseudoRelIntForm})$'$}
\intertext{and}
a (z,-z) &= \pi ^{-d} \int _{\cc d}a^{\aw}(w_1,-w_1)e^{-|z-w_1|^2}
\, d\lambda (w_1).
\tag*{(\ref{Eq:AntiWickAnalPseudoRelIntForm2})$'$}
\end{align}
These formulae were especially emphasized on page
587 in \cite{Berezin71}.

\medspace

Let
$a_1,a_2 \in \wideparen \maclA _0'(\cc {2d})$.
For suitable additional conditions on $a_1$ and $a_2$,
it is possible to compose the
Wick operators $\op _{\mathfrak V}(a_1)$ and
$\op _{\mathfrak V}(a_2)$, and
the resulting operator is again a Wick operator.
The (complex) twisted product $a_1 \wpr _{\mathfrak V}a_2$ is
then defined by
\begin{equation}\label{Eq:DefCompTwistProd0}
\op _{\mathfrak V}(a_1)\circ \op _{\mathfrak V}(a_2) =
\op _{\mathfrak V}(a_1 \wpr _{\mathfrak V}a_2),
\end{equation}
provided the composition on the left-hand side is
well-defined as a continuous operator from $\maclA _0(\cc d)$
to $\maclA _0'(\cc d)$.
By straight-forward computations it follows that
the product $\wpr _{\mathfrak V}$ is given by
\begin{equation}\label{Eq:DefCompTwistProd}
a_1 \wpr _{\mathfrak V} a_2 (z,w)
= \pi ^{-d}\int_{\cc d} a_1(z,u) a_2 (u,w) e^{-(z-u,w-u)}\, d \lambda (u),
\quad z,w \in \cc d,
\end{equation}
when the integrand belongs to $L^1(\cc d)$ for every $z,w \in \cc d$
(see e.{\,}g. \cite{Berezin71,TeToWa}).

\par

\subsection{Toeplitz operators}\label{subsec1.7}

\par

Let $\phi _1,\phi _2\in \maclS _{1/2}(\rr d)\setminus 0$.
Then the Toeplitz operator $\tp _{\phi _1,\phi _2}(\fka )$ with symbol
$\fka \in \maclS _{1/2}'(\rr {2d})$ is the linear and continuous operator
from $\maclS _{1/2}(\rr d)$ to $\maclS _{1/2}'(\rr d)$, given by the formula
\begin{equation}\label{Eq:ToeplitzDef}
(\tp _{\phi _1,\phi _2}(\fka )f,g)_{L^2(\rr d)} =
(\fka \cdot V_{\phi _1} f,V_{\phi _2} g)_{L^2(\rr {2d})}.
\end{equation}
It is proved in \cite{Toft18} that $\tp _{\phi _1,\phi _2}(\fka )$ is continuous on
$\maclS _{1/2}(\rr d)$
and uniquely extendable to a continuous operator on $\maclS _{1/2}'(\rr d)$.

\par

In our situation we have $\phi _1=\phi _2$ is equal to $\phi$ in \eqref{phidef}, 
and for conveniency we put $\tp (\fka )=\tp _{\phi ,\phi}(\fka )$.
In fact, for suitable $a\in \wideparen A(\cc {2d})$
we shall mainly consider modified Toeplitz operators $\tp _{\mathfrak V}(a)$
given by
\begin{equation}\tag*{(\ref{Eq:ToeplitzDef})$'$}
\tp _{\mathfrak V}(a)=\tp (\fka )
\quad \text{when}\quad
a(z,z) = \fka (\sqrt 2\, x,-\sqrt 2\, \xi ),\ z=x+i\xi .
\end{equation}
By straight-forward applications of \eqref{UVdef} and \eqref{Eq:UVdefInv}
it follows
\begin{equation}\label{Eq:ToeplAntiWick}
\op _{\mathfrak V}^{\aw}(a)
=
\mathfrak V_d\circ \tp _{\mathfrak V}(a)\circ \mathfrak V_d^{-1}.
\end{equation}
(See also Section 6 in \cite{To11}.)

\par

We recall that Toeplitz operators can be formulated as pseudo-differential
operators, due to the formula
\begin{equation}\label{Eq:ToeplPseudo}
\tp (\fka) = \left ( \frac 2\pi \right )^{\frac d2} \op ^w(\fka *e^{-|\cdo |^2}),
\end{equation}
which is equivalent to \eqref{Eq:AntiWickAnalPseudoRelIntForm}
and \eqref{Eq:AntiWickAnalPseudoRelIntForm2}.
(See e.{\,}g. \cite{Shubin1,Fo,To11} and the references therein.)

\par

\section{Operator kernels and multiplications for
spaces of power series expansions}\label{sec2}

\par

In this section we deduce kernel theorems for linear operators acting
between (different) $\maclA _s$ spaces and their duals. In \cite{Teofanov2}
such kernel results were explained for linear operators which
map $\maclA _s(\cc {d_1})$ into $\maclA _s'(\cc {d_2})$ or
which map $\maclA _{0,s}(\cc {d_1})$ into $\maclA _{0,s}'(\cc {d_2})$. 
The latter results can also be obtained by classical kernel results for
linear operators acting on nuclear spaces (see e.{\,}g. \cite{Tre}).

\par

\subsection{Kernels to linear operators acting on spaces of
power series expansions}

\par

The continuity results for kernel operators rely on duality properties
between the spaces through the $\wideparen A^2$ form in
\eqref{Eq:WideparA2Def1} and \eqref{Eq:WideparA2Def2}. In fact, we
have the following proposition which describes important topological
properties of the spaces in Definitions
\ref{Def:tauSpaces} and \ref{Def:tauSpaces2},
and which is a straight-forward
consequence of Remarks \ref{Rem:elltauSpaces2}
and \ref{Rem:FeaturesSeqSpaces}, and the definitions.
The details are left for the reader.

\par

\begin{prop}\label{Prop:DualtauSpaces}
Let $s_0,s_1,s_2\in \overline{\mathbf R_\flat}$, $s=(s_2,s_1)$ and
$W =\cc {d_2}\times \cc {d_1}$. Then the following is true:
\begin{enumerate}
\item the spaces in
\eqref{Eq:SesAnalSp2}
are complete Hausdorff topological vector spaces,
and $\wideparen \maclA _{0,s}(W)$ and
$\wideparen \maclA ^\star _{s}(W)$ are Fr{\'e}chet spaces. It holds
$\wideparen \maclA _{(s_0,s_0)} = \wideparen \maclA _{s_0}$
and
$\wideparen \maclA ^\star _{(s_0,s_0)} = \wideparen \maclA ^\star _{s_0}$;

\vrum

\item $\wideparen \maclA _0(W)$ is dense in the spaces in
\eqref{Eq:SesAnalSp2}, \eqref{Eq:BparenSpaces}
and \eqref{Eq:CparenSpaces};

\vrum

\item the form
$(\cdo ,\cdo )_{\wideparen A^2(W)}$
on $\wideparen \maclA _0(W)$
is uniquely extendable to separate continuous non-degenerate
mappings from
\begin{equation}\label{Eq:DualPairs}
\begin{aligned}
\wideparen \maclA _{s}^\star (W)\times
\wideparen \maclA _{s}(W),
\qquad
&\wideparen \maclC _{s}^\star (W)\times
\wideparen \maclB _{s}(W)
\quad \text{and} \quad
\wideparen \maclB _{s}^\star (W)\times
\wideparen \maclC _{s}(W)
\end{aligned}
\end{equation}
to $\mathbf C$. 
Furthermore,
$\wideparen \maclA _{s}^\star (W)$ and
$\wideparen \maclA _{s}(W)$ are (strong)
duals to each others, through the form
$(\cdo ,\cdo )_{\wideparen A^2(W)}$.
\end{enumerate}

\par

If instead $s_1,s_2\in \mathbf R_{\flat ,\infty}$, then
the same holds true with $\wideparen \maclA _{0,s}$,
$\wideparen \maclB _{0,s}$ and
$\wideparen \maclC _{0,s}$ in place of $\wideparen \maclA _{s}$,
$\wideparen \maclB _{s}$
and $\wideparen \maclC _{s}$, respectively, at each occurrence.
\end{prop}

\par

Evidently, it follows from (3) in Proposition \ref{Prop:DualtauSpaces}
that
\begin{equation}
\begin{alignedat}{2}
\wideparen \maclA _{(s_2,s_1)}'(W)
&=
\wideparen \maclA _{(s_2,s_1)}^\star (W), &
\quad
(\wideparen \maclA _{(s_2,s_1)}^\star )'(W)
&=
\wideparen \maclA _{(s_2,s_1)}(W),
\end{alignedat}
\end{equation}
through the form $(\cdo ,\cdo )_{\wideparen A^2(W)}$.

\par

The following kernel results characterize linear operators which
map (different) $\maclA _s$ or $\maclA _{0,s}$ spaces and their duals
into each others. Here and in what follows we let $\maclL (V_1;V_2)$
be the set of linear and continuous mappings from the topological vector
space $V_1$ into the topological vector space $V_2$. We also set
$\maclL (V)=\maclL (V;V)$.

\par

\begin{prop}\label{Prop:KernelsDifficultDirectionExt}
Let $s_1,s_2\in \overline{\mathbf R_{\flat ,\infty}}$, $s=(s_2,s_1)$,
$W=\cc {d_2}\times \cc {d_1}$, and let
$T\in \maclL (\maclA _0(\cc {d_1});\maclA _0'(\cc {d_2}))$.
Then the following is true:
\begin{enumerate}
\item if $T\in \maclL (\maclA _{s_1}'(\cc {d_1});\maclA _{s_2}(\cc {d_2}))$,
then there is a unique
$K\in \wideparen \maclA _{s}(W)$ such that
\begin{equation}\label{Kmap2}
(TF)(z_2) = (F,\, \overline{K(z_2,\cdo)}\, )_{A^2(\cc {d_1})},
\qquad z_2\in \cc {d_2},
\end{equation}
holds true for every $F\in \maclA _{s_1}(\cc {d_1})$;

\vrum

\item if $T\in \maclL (\maclA _{s_1}(\cc {d_1});\maclA _{s_2}'(\cc {d_2}))$,
then there is a unique
$K\in \wideparen \maclA _{s}'(W)$ such that
\eqref{Kmap2} holds true for every $F\in \maclA _{s_1}'(\cc {d_1})$.
\end{enumerate}

\par

The same holds true with $\maclA _{0,s_j}$ and
$\wideparen \maclA _{0,s}$ in place of
$\maclA _{s_j}$ and $\wideparen \maclA _{s}$,
respectively, $j=1,2$, at each occurrence.
\end{prop}

\par

Here \eqref{Kmap2} should be interpreted
as
\begin{equation}\tag*{(\ref{Kmap2})$'$}
(TF,G) _{A^2(\cc {d_2})} =  (K ,G\otimes \overline F\, )
_{\wideparen A^2(\cc {d_2}\times \cc {d_1})},
\end{equation}
when $F\in \maclA _0(\cc {d_1})$ and $G\in \maclA _0(\cc {d_2})$.

\par

\begin{prop}\label{Prop:KernelsEasyDirectionExt}
Let $s_1,s_2\in \overline{\mathbf R_{\flat ,\infty}}$, $s=(s_2,s_1)$,
$W=\cc {d_2}\times \cc {d_1}$, $K\in \wideparen \maclA _0'(W)$,
and let
$T=T_K\in \maclL (\maclA _0(\cc {d_1});\maclA _0'(\cc {d_2}))$
be given by
\begin{equation}\label{Eq:Kmap}
F\mapsto T_KF = \big ( z_2\mapsto (F,\overline {K(z_2,\cdo)})
_{A^2(\cc {d_1})} \big ).
\end{equation}
Then the following is true:
\begin{enumerate}
\item if $K\in \wideparen \maclA _{s}(W)$, then $T_K$
extends uniquely to a linear and continuous map from
$\maclA _{s_1}'(\cc {d_1})$ to $\maclA _{s_2}(\cc {d_2})$;

\vrum

\item if $K\in \wideparen \maclA _{s}'(W)$, then
$T_K$ extends uniquely
to a linear and continuous map from $\maclA _{s_1}(\cc {d_1})$ to
$\maclA _{s_2}'(\cc {d_2})$.
\end{enumerate}

\par

The same holds true with $\maclA _{0,s_j}$ and
$\wideparen \maclA _{0,s}$ in place of
$\maclA _{s_j}$ and $\wideparen \maclA _{s}$,
respectively, $j=1,2$, at each occurrence.
\end{prop}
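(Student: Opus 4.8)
The plan is to reduce everything to the discrete (sequence) picture, where the spaces are defined, and there run through the bilinear estimate via Cauchy-Schwarz. First I would expand the kernel and the input function in the monomial bases. Writing $K$ through $\Theta_C^{-1}K\in \maclA_{(s_1,s_2)}(W)$ as a power series with coefficients $\{c(\alpha_2,\alpha_1)\}$ and $F=\sum_{\alpha_1}b(\alpha_1)e_{\alpha_1}$, the orthonormality of $\{e_\alpha\}$ in $A^2$ together with the bilinear pairing \eqref{A2scalarBil} gives
\begin{equation*}
(T_KF)(z_2)=\sum_{\alpha_2\in\nn{d_2}}d(\alpha_2)e_{\alpha_2}(z_2),
\qquad
d(\alpha_2)=\sum_{\alpha_1\in\nn{d_1}}c(\alpha_2,\alpha_1)b(\alpha_1).
\end{equation*}
Thus the analytic statement becomes the claim that the map $\{b(\alpha_1)\}\mapsto\{d(\alpha_2)\}$ is continuous from $\ell_{s_1}'(\nn{d_1})$ into $\ell_{s_2}(\nn{d_2})$ in case (1), and from $\ell_{s_1}(\nn{d_1})$ into $\ell_{s_2}'(\nn{d_2})$ in case (2), whenever $\{c(\alpha_2,\alpha_1)\}\in\ell_{\maclA,(s_1,s_2)}(\Lambda)$, respectively $\ell_{\maclA,(s_1,s_2)}'(\Lambda)$. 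Here I would invoke Remark \ref{Rem:LinkEllHs} to move freely between the analytic spaces and their sequence counterparts, so that it suffices to prove well-definedness and continuity of the coefficient map.

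For case (1) I would fix the target weight: given $r>0$ I must bound $\sum_{\alpha_2}|d(\alpha_2)|\vartheta_{r,s_2}(\alpha_2)$-type norms by a seminorm of $F$ in $\ell_{s_1}'$. By \eqref{Eq:ellAparenSpaces2} the coefficients $c$ lie in some $\ell^p_{[\vartheta_{r_0,(s_1,s_2)}]}$, i.e. $|c(\alpha_2,\alpha_1)|\lesssim \vartheta_{r_0,s_1}(\alpha_1)^{-1}\vartheta_{r_0,s_2}(\alpha_2)^{-1}$ after absorbing the $\ell^p$-summability; the key point is that the product structure of $\vartheta_{r,(s_1,s_2)}$ separates the $\alpha_1$- and $\alpha_2$-dependence. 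Then, for each fixed $\alpha_2$,
\begin{equation*}
|d(\alpha_2)|
\le
\sum_{\alpha_1}|c(\alpha_2,\alpha_1)|\,|b(\alpha_1)|
\lesssim
\vartheta_{r_0,s_2}(\alpha_2)^{-1}
\sum_{\alpha_1}\vartheta_{r_0,s_1}(\alpha_1)^{-1}|b(\alpha_1)|,
\end{equation*}
and the last sum is finite and controlled by a $\ell_{s_1}'$-seminorm of $b$ provided $r_0$ is chosen admissibly (this is where the projective/inductive structure of $\ell_{s_1}'$ and $\ell_{s_2}$ must be matched; one needs a weight with strictly faster decay to make the $\alpha_1$-sum converge, which costs an arbitrarily small loss in the exponent). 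Swapping the roles of the two variables and dualizing handles case (2) by essentially the same computation, now estimating the pairing against an arbitrary test sequence in $\ell_{s_2}(\nn{d_2})$.

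Uniqueness of the extension is immediate: $\maclA_0(\cc{d_1})$ (spanned by finitely supported coefficient sequences) is dense in each $\maclA_{s_1}(\cc{d_1})$ and $\maclA_{s_1}'(\cc{d_1})$ by the density in \eqref{inclHermExpSpaces} transported through $T_\maclA$, so a continuous map is determined by its values there; and \eqref{Eq:Kmap} already prescribes those values. The Beurling-type statement, with $\maclA_{0,s_j}$ and $\wideparen\maclA_{0,(s_1,s_2)}$, follows by the same argument after interchanging $\indlim$ and $\projlim$ throughout, i.e. quantifiers "there exists $r$" become "for all $r$"; the weight manipulations are identical in structure.

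The main obstacle I anticipate is the bookkeeping of the two independent parameters $r$ (for the kernel) and the target/source radii, particularly in the borderline indices $s_j=\flat_\sigma$, where $\vartheta_{r,\flat_\sigma}(\alpha)=r^{|\alpha|}(\alpha!)^{1/(2\sigma)}$ and one must verify that the convergence-producing shift in $r$ stays within the allowed range dictated by the $\indlim/\projlim$ definitions. Once the separation of variables afforded by the product weight $\vartheta_{r,(s_1,s_2)}$ is exploited, the estimates themselves are a routine Cauchy-Schwarz (or Hölder, for general $p$) applied in the $\alpha_1$-variable, so the difficulty is entirely in choosing the radii so that both the kernel membership and the input membership feed the geometric-type convergence of the contracted sum.
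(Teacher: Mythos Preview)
Your approach is correct and is precisely the natural argument the paper has in mind. In fact the paper does not supply a separate proof of this proposition at all: Propositions \ref{Prop:KernelsDifficultDirectionExt}--\ref{Prop:KernelsEasyDirectionNew} are stated in a block, and the only proof written out (placed after Proposition \ref{Prop:KernelsEasyDirectionNew}) actually establishes the \emph{difficult} direction, Proposition \ref{Prop:KernelsDifficultDirectionNew}. The easy directions are treated as routine consequences of the sequence-space descriptions, which is exactly what you carry out: pass to coefficients via $T_{\maclA}$, exploit the product structure $\vartheta_{r,(s_1,s_2)}(\alpha_2,\alpha_1)=\vartheta_{r,s_1}(\alpha_1)\vartheta_{r,s_2}(\alpha_2)$ to separate variables, and use the freedom in the radius $r$ (coming from the $\indlim$/$\projlim$ definitions) to gain the extra decay needed for convergence of the $\alpha_1$-sum. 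Your handling of the radii and of uniqueness via density of $\maclA_0$ is fine; the anticipated ``obstacle'' in the $\flat_\sigma$ case dissolves once you note that $\vartheta_{r,\flat_\sigma}(\alpha)/\vartheta_{r_0,\flat_\sigma}(\alpha)=(r/r_0)^{|\alpha|}$ is geometrically summable for $r<r_0$, so no special argument is needed there.
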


\par

We observe that if $K$ and $F$ are the same as in
\eqref{Eq:Kmap} and $z_1\mapsto K(z_2,z_1)F(z_1)$
is integrable with respect to $d\mu (z_1)$, then \eqref{Eq:Kmap}
is the same as
\begin{equation}\tag*{(\ref{Eq:Kmap})$'$}
(T_KF)(z_2) = \int _{\cc {d_1}}K(z_2,z_1)F(z_1)\, d\mu (z_1).
\end{equation}

\par

Propositions \ref{Prop:KernelsDifficultDirectionExt} and
\ref{Prop:KernelsEasyDirectionExt} follow essentially from
abstract kernel results for linear operators acting between
topological vector spaces.(See e.{\,}g. \cite{Tre}. See also
\cite{Teofanov2} for more explicit approaches.) On the other hand,
the following result might be more cumbersome to
deduce from such abstract kernel results.

\par

\begin{prop}\label{Prop:KernelsDifficultDirectionNew}
Let $s_1,s_2\in \overline{\mathbf R_{\flat ,\infty}}$, $s=(s_2,s_1)$,
$W=\cc {d_2}\times \cc {d_1}$, and let
$T\in \maclL (\maclA _0(\cc {d_1});\maclA _0'(\cc {d_2}))$.
Then the following is true:
\begin{enumerate}
\item if $T\in \maclL (\maclA _{s_1}(\cc {d_1});\maclA _{s_2}(\cc {d_2}))$,
then there is a unique
$K\in \wideparen \maclB _{s}(W)$ such that
\eqref{Kmap2} holds true for every $F\in \maclA _{s_1}(\cc {d_1})$;

\vrum

\item if $T\in \maclL (\maclA _{s_1}'(\cc {d_1});\maclA _{s_2}'(\cc {d_2}))$,
then there is a unique
$K\in \wideparen \maclB _{s}^\star (W)$ such that
\eqref{Kmap2} holds true for every $F\in \maclA _{s_1}'(\cc {d_1})$.
\end{enumerate}

\par

The same holds true with $\maclA _{0,s_j}$ and
$\wideparen \maclB _{0,s}$ in place of
$\maclA _{s_j}$ and $\wideparen \maclB _{s}$,
respectively, $j=1,2$, at each occurrence.
\end{prop}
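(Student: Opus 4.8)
The plan is to push everything down to the level of matrices acting on coefficient sequences, where $\wideparen \maclB$ becomes the explicit weighted sequence space $\ell _\maclB$ of Definition \ref{Def:elltauSpaces2}. First I would use Remark \ref{Rem:LinkEllHs}, which makes $T_\maclA$ a homeomorphism between each space in \eqref{ellSpaces} and the corresponding space in \eqref{clASpaces}. Writing $F=\sum _{\alpha _1}c(\alpha _1)e_{\alpha _1}$ and $T_KF=\sum _{\alpha _2}d(\alpha _2)e_{\alpha _2}$, and using that $\{ e_\alpha \}$ is an orthonormal basis of $A^2$ (so that $(e_\alpha ,e_\beta )_{A^2}=\delta _{\alpha ,\beta}$), the pairing \eqref{Kmap2} becomes the matrix identity $d(\alpha _2)=\sum _{\alpha _1}k(\alpha _2,\alpha _1)c(\alpha _1)$, where $\{ k(\alpha _2,\alpha _1)\} =(\Theta _C\circ T_\maclA)^{-1}K$. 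In particular $k(\alpha _2,\alpha _1)$ is the $e_{\alpha _2}$-coefficient of $Te_{\alpha _1}$, so the standing hypothesis $T\in \maclL (\maclA _0(\cc {d_1});\maclA _0'(\cc {d_2}))$ already produces a unique formal kernel $K\in \wideparen \maclA _0'(W)$ (directly, or as the case $s_1=s_2=0$ of Proposition \ref{Prop:KernelsDifficultDirectionExt}). Since by Definition \ref{Def:tauSpaces2} membership $K\in \wideparen \maclB _{(s_1,s_2)}(W)$ is the same as $\{ k(\alpha _2,\alpha _1)\} \in \ell _{\maclB ,(s_1,s_2)}$, what remains is to prove that the stronger continuity hypothesis forces this coefficient sequence into $\ell _{\maclB ,(s_1,s_2)}$ in case (1) and into $\ell _{\maclB ,(s_1,s_2)}^\star$ in case (2), and analogously in the Beurling situation.

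Because the spaces in Definition \ref{Def:elltauSpaces2} are independent of $p\in (0,\infty ]$, I would carry out the estimate with $p=\infty$; this is exactly what avoids an otherwise troublesome summation over $\alpha _1$. The extraction of the matrix bound is the heart of the matter. When the source space carries a projective topology (so $\maclA _{s_1}'$ in case (1), or $\maclA _{0,s_1}$ in case (2) after the Beurling swap), continuity of $T$ gives a seminorm factorization directly, and testing on a single monomial $e_{\alpha _1}$ is enough. When the source carries an inductive topology, I would instead test $T$ on the family $\{ e_{\alpha _1}/\vartheta _{r_1,s_1}(\alpha _1)\} _{\alpha _1}$, which has supremum-norm $1$ in the step $\ell ^\infty _{[\vartheta _{r_1,s_1}]}$ and is therefore bounded in the source; continuity sends it to a bounded subset of the target, which then lies bounded in a single step $\ell ^\infty _{[\vartheta _{r_2,s_2}]}$. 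In either case, evaluating the step norm of $Te_{\alpha _1}=\sum _{\alpha _2}k(\alpha _2,\alpha _1)e_{\alpha _2}$ yields a bound, uniform in $(\alpha _1,\alpha _2)$, of the form
\[
|k(\alpha _2,\alpha _1)| \lesssim \frac {\vartheta _{r_1,s_1}(\alpha _1)}{\vartheta _{r_2,s_2}(\alpha _2)}
\]
in case (1) (so that $k\in \ell ^\infty _{[\omega _{s_1,s_2;r_1,r_2}]}$), and with the reciprocal ratio $\vartheta _{r_2,s_2}(\alpha _2)/\vartheta _{r_1,s_1}(\alpha _1)$ in case (2) (so that $k\in \ell ^\infty _{[1/\omega _{s_1,s_2;r_1,r_2}]}$).

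It then remains only to read off the quantifier order against Definition \ref{Def:elltauSpaces2}. In case (1) the source $\maclA _{s_1}$ is an inductive limit, so the factorization supplies, for every $r_1$, some $r_2$ for which the displayed bound holds; this is precisely membership in $\underset {r_1}\projlim \, \underset {r_2}\indlim \, \ell ^\infty _{[\omega _{s_1,s_2;r_1,r_2}]}=\ell _{\maclB ,(s_1,s_2)}$ of \eqref{Eq:ellBparenSpaces2}, hence $K\in \wideparen \maclB _{(s_1,s_2)}(W)$. In case (2) both $\maclA _{s_1}'$ and $\maclA _{s_2}'$ carry projective topologies, so the factorization runs ``for every $r_2$ there is $r_1$'', which matches \eqref{Eq:ellBparenSpaces3} and gives $K\in \wideparen \maclB _{(s_1,s_2)}^\star (W)$. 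The Beurling statements follow verbatim after interchanging the roles of the inductive and projective limits --- $\maclA _{0,s_1},\maclA _{0,s_2}$ being Fr{\'e}chet and $\maclA _{0,s_1}',\maclA _{0,s_2}'$ being inductive --- which is exactly the swap built into \eqref{Eq:ellBparenSpaces1} and \eqref{Eq:ellBparenSpaces4}. The degenerate values $s_j\in \{ 0,\infty \}$ are covered by the conventions in Definition \ref{Def:CompletingSpacesEndcases}, where the relevant spaces are either trivial or defined with $p=2$, so the same computation applies.

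The step I expect to be the main obstacle is the passage from the qualitative statement ``$T$ is continuous'' to the quantitative bound above, with a \emph{single} pair $(r_1,r_2)$ and a constant \emph{uniform} in $\alpha _1$. This is precisely where one must invoke that the inductive limits defining the spaces in \eqref{clASpaces} are regular, i.e.\ that bounded sets are contained and bounded in one step --- a structural property of these Silva/(DFS)-type spaces that I would cite from \cite{Toft18,Teofanov2}. Together with the reduction to $p=\infty$, this is what prevents a loss of uniformity and the divergence of the sum over $\alpha _1$ that would occur for finite $p$. Once the uniform bound is in hand, the remaining bookkeeping --- tracking which of the quantifier patterns $\projlim \, \indlim$ or $\indlim \, \projlim$ occurs and confirming it against Definitions \ref{Def:elltauSpaces2} and \ref{Def:tauSpaces2} --- is routine.
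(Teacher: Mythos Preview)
Your proposal is correct and follows essentially the same approach as the paper: test the operator on the normalized monomials $e_{\alpha _1}/\vartheta _{r_1,s_1}(\alpha _1)$, use that this family is bounded in the source space so its image is bounded in the target, and read off the resulting uniform bound on the matrix entries $k(\alpha _2,\alpha _1)$ against the quantifier structure in Definition \ref{Def:elltauSpaces2}. Your write-up is in fact more explicit than the paper's about the quantifier bookkeeping across the four cases and about the role of regularity of the inductive limits, which the paper leaves implicit.
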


\par

We also have the following converse of the preceding proposition.

\par

\begin{prop}\label{Prop:KernelsEasyDirectionNew}
Let $s_1,s_2\in \overline{\mathbf R_{\flat ,\infty}}$, $s=(s_2,s_1)$,
$W=\cc {d_2}\times \cc {d_1}$, $K\in \wideparen \maclA _0'(W)$,
and let
$T=T_K\in \maclL (\maclA _0(\cc {d_1});\maclA _0'(\cc {d_2}))$
be given by \eqref{Eq:Kmap}. Then the following is true:
\begin{enumerate}
\item if $K\in \wideparen \maclB _{s}(W)$, then $T_K$
extends uniquely to a linear and continuous map from
$\maclA _{s_1}(\cc {d_1})$ to $\maclA _{s_2}(\cc {d_2})$;

\vrum

\item if $K\in \wideparen \maclB _{s}^\star (W)$,
then $T_K$ extends uniquely to a linear and continuous map from
$\maclA _{s_1}'(\cc {d_1})$ to $\maclA _{s_2}'(\cc {d_2})$.
\end{enumerate}

\par

The same holds true with $\maclA _{0,s_j}$ and
$\wideparen \maclB _{0,s}$ in place of
$\maclA _{s_j}$ and $\wideparen \maclB _{s}$,
respectively, $j=1,2$, at each occurrence.
\end{prop}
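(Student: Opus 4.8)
The plan is to transfer the whole statement to the level of coefficient arrays, where $T_K$ becomes multiplication by the array representing $K$, and then to read off the mapping properties from elementary weight estimates. Write $K=(\Theta_C\circ T_{\maclA})c$ for the array $c=\{c(\alpha_2,\alpha_1)\}_{\alpha_2\in\nn{d_2},\,\alpha_1\in\nn{d_1}}$, so that by Definition \ref{Def:tauSpaces2} membership of $K$ in the spaces \eqref{Eq:BparenSpaces} is equivalent to membership of $c$ in the corresponding spaces of \eqref{Eq:ellBparenSpaces1to4}. For $F=\sum_\beta d(\beta)e_\beta$, a direct evaluation of the pairing in \eqref{Eq:Kmap} using the bilinear form \eqref{A2scalarBil} and the orthonormality of $\{e_\alpha\}$ in $A^2$ (cf. \eqref{Eq:BargmannHermiteMap}) gives
\begin{equation*}
(T_KF)(z_2)=\sum_{\alpha_2}\Big(\sum_{\alpha_1}c(\alpha_2,\alpha_1)\,d(\alpha_1)\Big)e_{\alpha_2}(z_2).
\end{equation*}
Hence, under the homeomorphisms $T_{\maclA}$ and $\Theta_C$ (Remark \ref{Rem:LinkEllHs} and Definition \ref{Def:tauSpaces}), the operator $T_K$ is conjugate to the matrix map $S_c:\{d(\alpha_1)\}\mapsto\{\sum_{\alpha_1}c(\alpha_2,\alpha_1)d(\alpha_1)\}$. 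It therefore suffices to prove that $S_c$ maps $\ell_{s_1}(\nn{d_1})$ continuously into $\ell_{s_2}(\nn{d_2})$ when $c\in\ell_{\maclB,(s_1,s_2)}(\Lambda)$, and $\ell_{s_1}'(\nn{d_1})$ continuously into $\ell_{s_2}'(\nn{d_2})$ when $c\in\ell_{\maclB,(s_1,s_2)}^\star(\Lambda)$, together with the analogous Beurling statements with the subscript $0$. The cases $s_1=0$ or $s_2=0$ are trivial, since the spaces reduce to $\{0\}$ by Definition \ref{Def:CompletingSpacesEndcases}.

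For the continuity I would take $p=\infty$ (legitimate since the spaces do not depend on $p$). Unwinding \eqref{Eq:ellBparenSpaces2}, membership $c\in\ell_{\maclB,(s_1,s_2)}(\Lambda)$ means that for every $r_1>0$ there is $r_2>0$ with $|c(\alpha_2,\alpha_1)|\lesssim\vartheta_{r_1,s_1}(\alpha_1)/\vartheta_{r_2,s_2}(\alpha_2)$, while $d\in\ell_{s_1}$ means $|d(\alpha_1)|\lesssim 1/\vartheta_{\rho,s_1}(\alpha_1)$ for some $\rho>0$. Choosing $r_1$ small enough that the ratio $\vartheta_{r_1,s_1}/\vartheta_{\rho,s_1}$ is summable over $\nn{d_1}$ and taking the corresponding $r_2$, I obtain
\begin{equation*}
\Big|\sum_{\alpha_1}c(\alpha_2,\alpha_1)d(\alpha_1)\Big|
\lesssim
\frac{1}{\vartheta_{r_2,s_2}(\alpha_2)}\sum_{\alpha_1}\frac{\vartheta_{r_1,s_1}(\alpha_1)}{\vartheta_{\rho,s_1}(\alpha_1)}
\lesssim
\frac{1}{\vartheta_{r_2,s_2}(\alpha_2)},
\end{equation*}
so that $S_cd\in\ell_{s_2}$; since the bound factors through the Banach step $\ell^\infty_{[\vartheta_{\rho,s_1}]}\to\ell^\infty_{[\vartheta_{r_2,s_2}]}$, continuity of $S_c:\ell_{s_1}\to\ell_{s_2}$ follows from the universal property of the inductive limits. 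For the dual statement, $c\in\ell_{\maclB,(s_1,s_2)}^\star(\Lambda)$ means that for every $r_2>0$ there is $r_1>0$ with $|c(\alpha_2,\alpha_1)|\lesssim\vartheta_{r_2,s_2}(\alpha_2)/\vartheta_{r_1,s_1}(\alpha_1)$, and $d\in\ell_{s_1}'$ satisfies $|d(\alpha_1)|\lesssim\vartheta_{\rho,s_1}(\alpha_1)$ for all $\rho>0$; fixing a target $\tau$, taking $r_2$ with $\vartheta_{r_2,s_2}\le\vartheta_{\tau,s_2}$, the associated $r_1$, and $\rho$ small, the sum $\sum_{\alpha_1}|d(\alpha_1)|/\vartheta_{r_1,s_1}(\alpha_1)$ converges and yields $|S_cd(\alpha_2)|\lesssim\vartheta_{\tau,s_2}(\alpha_2)$, hence $S_cd\in\ell_{s_2}'$, with continuity now coming from the universal property of the projective limits. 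The Beurling versions are identical after interchanging the roles of $\indlim$ and $\projlim$.

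The step demanding genuine care — and the one I regard as the main obstacle — is the bookkeeping of the scale parameters over the full range $s_1,s_2\in\overline{\mathbf R_{\flat,\infty}}$, so that in each regime one can arrange a strict gap producing a convergent series $\sum_{\alpha_1}$. For $s\in\mathbf R_+$ the relevant ratio is $e^{(r_1-\rho)|\alpha_1|^{1/(2s)}}$, summable as soon as $r_1<\rho$; for $s=\flat_\sigma$ it reduces to $(r_1/\rho)^{|\alpha_1|}$ after the factorials cancel, again summable for $r_1<\rho$; and for $s=\infty$, where one works with $p=2$ and polynomial weights $\eabs{\alpha}^{r}$, one instead enlarges the polynomial gap. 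In every regime the estimate is uniform on the Banach building block, so it passes to the inductive, projective and mixed limits of Definitions \ref{DefSeqSpaces} and \ref{Def:elltauSpaces2}. Finally, uniqueness of the extension is automatic: $\maclA_0$ is dense in each of $\maclA_{s_1}$ and $\maclA_{s_1}'$ by \eqref{inclHermExpSpaces}, and two continuous operators agreeing on $\maclA_0$ coincide.
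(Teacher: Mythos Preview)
Your argument is correct and is precisely the natural way to establish this ``easy'' direction: pass to coefficient arrays via $T_{\maclA}$ and $\Theta_C$, identify $T_K$ with the matrix action $S_c$, and verify the weighted $\ell^\infty$ (or $\ell^2$ when $s_j=\infty$) estimates by choosing the scale parameters so that $\sum_{\alpha_1}\vartheta_{r_1,s_1}(\alpha_1)/\vartheta_{\rho,s_1}(\alpha_1)$ converges. The case split over $s\in\mathbf R_+$, $s=\flat_\sigma$, $s=\infty$ is handled correctly, and the density argument for uniqueness is the right one.

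As for comparison with the paper: the proof block that appears in the paper immediately after this proposition is in fact a proof of the \emph{converse} direction, Proposition~\ref{Prop:KernelsDifficultDirectionNew} (it starts from a continuous $T$ and shows that its kernel lies in $\wideparen\maclB_{(s_1,s_2)}$). The present ``easy'' direction is not written out separately in the paper; it is implicitly subsumed under ``the other cases follow by similar arguments and are left for the reader''. Your write-up therefore supplies exactly the omitted verification, and it is fully consistent with the paper's framework (Definitions~\ref{DefSeqSpaces}, \ref{Def:elltauSpaces2}, \ref{Def:tauSpaces2} and Remark~\ref{Rem:LinkEllHs}).
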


\par

By combining Propositions
\ref{Prop:KernelsDifficultDirectionExt}--\ref{Prop:KernelsEasyDirectionNew},
we get the following.

\par

\begin{cor}\label{Cor:KernelLinOpChar}
Let $s_1,s_2 \in \overline{\mathbf R_\flat}$, $s=(s_1,s_2)$,
$W=\cc {d_2}\times \cc {d_1}$ and let $T_K$ be the map in
\eqref{Eq:Kmap} when $K\in \wideparen \maclA _0'(W)$. Then the mappings
\begin{alignat*}{3}
\wideparen \maclA _s(W) &\ni K &
\ &\mapsto \ &T_K &\in \maclL (\maclA _{s_1}'(\cc {d_1}),\maclA _{s_2}(\cc {d_2})),
\\[1ex]
\wideparen \maclA _s'(W) &\ni K &
\ &\mapsto \ &T_K &\in \maclL (\maclA _{s_2}(\cc {d_1}),\maclA _{s_2}'(\cc {d_2})),
\\[1ex]
\wideparen \maclB _s(W) &\ni K &
\ &\mapsto \ &T_K &\in \maclL (\maclA _{s_1}(\cc {d_1}),\maclA _{s_2}(\cc {d_2}))
\intertext{and}
\wideparen \maclB _s^\star (W) &\ni K &
\ &\mapsto \ &T_K &\in \maclL (\maclA _{s_1}'(\cc {d_1}),\maclA _{s_2}'(\cc {d_2}))
\end{alignat*}
are isomorphisms. The same holds true with
$\wideparen \maclA _{0,s}$, $\wideparen \maclB _{0,s}$,
$\maclA _{0,s_1}$ and $\maclA _{0,s_2}$ in place of
$\wideparen \maclA _{s}$, $\wideparen \maclB _{s}$,
$\maclA _{s_1}$ and $\maclA _{s_2}$, respectively, at each occurrence.
\end{cor}

\par

Proposition \ref{Prop:KernelsEasyDirectionNew} follows by straight-forward
computations and is left for the reader.

\par

\begin{proof}[Proof of Proposition \ref{Prop:KernelsDifficultDirectionNew}]
We only consider the case when $T$ is continuous from $\maclA _{s_1}(\cc {d_1})$
to $\maclA _{s_2}(\cc {d_2})$. The other cases follow by similar arguments and are
left for the reader.
Let $\vartheta _{r,s}$ be as in Definition \ref{DefSeqSpaces}.
By Propositions \ref{Prop:KernelsDifficultDirectionExt}
and \ref{Prop:KernelsEasyDirectionExt},
there is a unique
\begin{equation}\label{Eq:AnalKernelExp}
K(z_2,z_1)=\sum _{\alpha _j\in \nn {d_j}}
c(K;\alpha _2,\alpha _1)e_{\alpha _2}(z_2)e_{\alpha _1}(\overline {z_1})
\in \wideparen \maclA _0'(W),
\qquad
z_j\in \cc {d_j},
\end{equation}
such that \eqref{Kmap2} holds true for every $F\in \maclA _0(\cc {d_1})$. We need
to show that $K\in \wideparen \maclB _{s}(W)$.

\par

Let $r>0$ and set $F_{\alpha}(z_1) = \vartheta _{r,s_1}^{-1}(\alpha )e_\alpha (z_1)$
for every $\alpha \in \nn {d_1}$ and $z_1\in \zz {d_1}$. Then
$\{ F_\alpha \} _{\alpha \in \nn {d_1}}$ is a bounded subset of
$\maclA _{s_1}(\cc {d_1})$. By the continuity of $T$ it follows that
$\{ TF_\alpha \} _{\alpha \in \nn {d_1}}$ is a bounded subset of
$\maclA _{s_2}(\cc {d_2})$. Hence, if
$$
(TF_\alpha )(z_2) = \sum _{\alpha _2\in \nn {d_2}}c(TF_\alpha ;\alpha _2)
e_{\alpha _2}(z_2), \qquad z_2\in \cc {d_2},
$$
then
$$
c(TF_\alpha ;\alpha _2) = c(K;\alpha _2,\alpha )\vartheta _{r,s_1}^{-1}(\alpha )
$$
satisfies
$$
|c(TF_\alpha ;\alpha _2)|\le C\vartheta _{r_0,s_2}^{-1}(\alpha _2),
$$
for some constants $C,r_0>0$ which are independent of $\alpha _2$ and $\alpha$.
This gives,
$$
|c(K;\alpha _2,\alpha _1)|
\lesssim
\vartheta _{r,s_1}(\alpha _1)\vartheta _{r_0,s_2}^{-1}(\alpha _2),
\qquad \alpha _j\in \cc {d_j}.
$$
Since $r>0$ is arbitrary, the latter estimate implies that
$K\in \wideparen \maclB _{s}(\cc {d_2}\times \cc {d_1})$.
\end{proof}

\par

In Section \ref{sec4} we deduce related results compared to
Propositions
\ref{Prop:KernelsDifficultDirectionExt}--\ref{Prop:KernelsEasyDirectionNew}
and Corollary \ref{Cor:KernelLinOpChar} with Wick and anti-Wick operators
in place of kernel operators.

\par

\begin{example}\label{Example:ShubinWick}
Let $\omega \in L^\infty _{\loc}(\cc d)\simeq L^\infty _{\loc}(\rr {2d})$ be such that
\begin{equation}\label{Eq:Moderate}
\omega (z)>0
\quad \text{and}\quad
\omega (z+w)\lesssim \omega (z)\eabs w^r,\qquad
z,w\in \cc d,
\end{equation}
for some $r>0$. Then we recall that
the Shubin class $\Sh ^{(\omega )}_\rho (\rr {2d})$ (with respect to $\rho$
and $\omega$) is the set of all
$\fka \in C^\infty (\rr {2d})$ such that
$$
|\partial ^\alpha \fka (x,\xi )|\lesssim \omega (x+i\xi )\eabs {(x,\xi )}^{-\rho |\alpha |}.
$$
(See \cite{Shubin1}.)

\par

As in \cite{TeToWa}, we let
$\wideparen \maclA _{\Sh ,\rho} ^{(\omega)}(\cc {2d})$ be the class of Wick
symbols which consists of all $a\in \wideparen A(\cc {2d})$ such that
\begin{equation}\label{Eq:WickShubinDef}
|\partial _z^\alpha \overline \partial _w^\beta a(z,w)|
\lesssim
e^{\frac 12|z-w|^2}\omega (\sqrt 2\overline z)\eabs {z+w}^{-\rho |\alpha +\beta |}
\eabs {z-w}^{-N}
\end{equation}
for every $\alpha ,\beta \in \nn d$ and $N\ge 0$. Let $A$ be areal $d\times d$ matrix.
Then recall that for $\fka \in \mascS '(\rr {2d})$, we have
$\fka \in \Sh ^{(\omega )}_\rho (\rr {2d})$, if and only if
$\op _{\mathfrak V}(a)=\mathfrak V_d \circ \op _A(\fka )\circ \mathfrak V_d^{-1}$
for some $a\in \wideparen \maclA _{\Sh ,\rho} ^{(\omega)}(\cc {2d})$
(cf. \cite{TeToWa}).

\par

Let $a\in \wideparen \maclA _{\Sh ,\rho} ^{(\omega)}(\cc {2d})$
and let $K$ be the kernel of $\op _{\mathfrak V}(a)$. We claim that
\begin{equation}\label{Eq:ShubinFine}
K\in \wideparen \maclB _{0,\infty}(\cc {2d})\bigcap \wideparen \maclB
_{0,\infty}^\star (\cc {2d}).
\end{equation}

\par

In fact, by \eqref{Eq:WickShubinDef} and that $K(z,w)=e^{(z,w)}a(z,w)$
we get
\begin{equation}\label{Eq:WickShubinKernelEst}
|K(z,w)| \lesssim e^{\frac 12(|z|^2+|w|^2)}\omega (\sqrt 2\overline z)
\eabs {z-w}^{-N}
\quad \text{for every}\ N\ge 0.
\end{equation}
Since $\omega$ satisfies \eqref{Eq:Moderate} we get
$$
\omega (\sqrt 2\overline z) \lesssim \eabs z^{N_0}
\quad \text{and}\quad
\omega (\sqrt 2\overline z) \lesssim \omega (\sqrt 2\overline w)\eabs {z-w}^{N_0}
\lesssim \eabs w^{N_0}\eabs {z-w}^{N_0},
$$
for some $N_0\ge 0$. We also have
$$
\eabs {z-w}^{-N} \lesssim \eabs z^{-N}\eabs w^N
\quad \text{and}\quad
\eabs {z-w}^{-N} \lesssim \eabs z^{N}\eabs w^{-N}.
$$
By playing with $N\ge 0$, a combination of these estimates with
\eqref{Eq:WickShubinKernelEst} shows that
\begin{align*}
|K(z,w)| &\lesssim e^{\frac 12(|z|^2+|w|^2)}
\eabs z^{N_0+N}\eabs w^{-N}
\intertext{and}
|K(z,w)| &\lesssim e^{\frac 12(|z|^2+|w|^2)}
\eabs z^{-N}\eabs w^{N_0+N},
\end{align*}
which implies that \eqref{Eq:ShubinFine} holds.

\par

By Proposition \ref{Prop:KernelsDifficultDirectionNew} it now follows that
$\op _{\mathfrak V}(a)$ is continuous on both $\maclA _{0,\infty}(\cc d)$
and on $\maclA _{0,\infty}'(\cc d)$. This shows that any Shubin
operator is continuous on $\mascS (\rr d)$ and on $\mascS '(\rr d)$
(see e.{\,}g. \cite[Theorem 18.6.2]{Ho1}).
\end{example}

\par

\subsection{Compositions of analytic Kernel operators}.

\par

In what follows we identify any linear and continuous operator $T$
from $\maclA _0(\cc {d_1})$ to $\maclA _0'(\cc {d_2})$ by its kernel
$K\in \wideparen \maclA _0'(\cc {d_2}\times \cc {d_1})$ in \eqref{Kmap2}
and \eqref{Kmap2}$'$. By general continuity properties we get
the following theorem concerning the composition map
\begin{equation}\label{Eq:CompMarKernel}
(K_2,K_1) \mapsto  K_2\circ K_1.
\end{equation}

\par

\begin{thm}\label{Thm:KernelComp1}
Let $W_j=\cc {d_{j+1}}\times \cc {d_j}$, $j=1,2$, $W_3=\cc {d_3}\times \cc {d_1}$,
$s_1,s_2,s_3\in \overline {\mathbf R_{\flat}}$ and let $T$ be the map
from $\wideparen \maclA _0(W_2)\times \wideparen \maclA _0(W_1)$ to
$\wideparen \maclA _0(W_3)$, given by \eqref{Eq:CompMarKernel}.
Then the following is true:
\begin{enumerate}
\item $T$ is uniquely extendable to continuous mappings
from $\wideparen \maclA _{(s_3,s_2)}(W_2)\times \wideparen
\maclB _{(s_2,s_1)}^\star (W_1)$ or from
$\wideparen \maclB _{(s_3,s_2)}(W_2)\times \wideparen
\maclA _{(s_2,s_1)}(W_1)$ to $\wideparen \maclA _{(s_3,s_1)}(W_3)$;

\vrum

\item $T$ is uniquely extendable to continuous mappings
from $\wideparen \maclB _{(s_3,s_2)}^\star (W_2)\times \wideparen
\maclA _{(s_2,s_1)}'(W_1)$ or from $\wideparen \maclA _{(s_3,s_2)}'(W_2)\times \wideparen
\maclB _{(s_2,s_1)}(W_1)$ to $\wideparen \maclA _{(s_3,s_1)}'(W_3)$;
\end{enumerate}
If instead $s_1,s_2,s_3\in \mathbf R_{\flat ,\infty}$, then the same hold
true with $\wideparen \maclA _{0,(s_j,s_k)}$ and $\wideparen \maclB _{0,(s_j,s_k)}$
in place of $\wideparen \maclA _{(s_j,s_k)}$ and $\wideparen \maclB _{(s_j,s_k)}$,
respectively, at each occurrence.
\end{thm}

\par

\begin{thm}\label{Thm:KernelComp2}
Let $W_j=\cc {d_{j+1}}\times \cc {d_j}$, $j=1,2$, $W_3=\cc {d_3}\times \cc {d_1}$,
$s_1,s_2,s_3\in \overline {\mathbf R_{\flat}}$ and let $T$ be the map
from $\wideparen \maclA _0(W_2)\times \wideparen \maclA _0(W_1)$ to
$\wideparen \maclA _0(W_3)$, given by \eqref{Eq:CompMarKernel}.
Then the following is true:
\begin{enumerate}
\item $T$ is uniquely extendable to continuous mappings
from $\wideparen \maclB _{(s_3,s_2)}(W_2)\times \wideparen
\maclC _{(s_2,s_1)}(W_1)$ or from
$\wideparen \maclC _{(s_3,s_2)}(W_2)\times \wideparen
\maclB _{(s_2,s_1)}(W_1)$ to
$\wideparen \maclC _{(s_3,s_1)}(W_3)$.  The same
holds true with $\wideparen \maclB _{(s_j,s_k)}$ in place of
$\wideparen \maclC _{(s_j,s_k)}$ at each occurrence;

\vrum

\item $T$ is uniquely extendable to continuous mappings
from $\wideparen \maclB _{(s_3,s_2)}^\star (W_2)\times \wideparen
\maclC _{(s_2,s_1)}^\star (W_1)$ or from
$\wideparen \maclC _{(s_3,s_2)}^\star (W_2)\times \wideparen
\maclB _{(s_2,s_1)}^\star (W_1)$ to
$\wideparen \maclC _{(s_3,s_1)}^\star (W_3)$. The same
holds true with $\wideparen \maclB _{(s_j,s_k)}$ in place of
$\wideparen \maclC _{(s_j,s_k)}$ at each occurrence.
\end{enumerate}
If instead $s_1,s_2,s_3\in \mathbf R_{\flat ,\infty}$, then the same hold
true with $\wideparen \maclA _{0,(s_j,s_k)}$, $\wideparen \maclB _{0,(s_j,s_k)}$
and $\wideparen \maclC _{0,(s_j,s_k)}$ in place of
$\wideparen \maclA _{(s_j,s_k)}$, $\wideparen \maclB _{(s_j,s_k)}$
and $\wideparen \maclC _{(s_j,s_k)}$, respectively, at each occurrence.
\end{thm}

\par

\begin{proof}[Proof of Theorems \ref{Thm:KernelComp1} and
\ref{Thm:KernelComp2}]
The assertion, except for those parts which involve
$\wideparen \maclC _{(s_j,s_k)}$
and $\wideparen \maclC _{(s_j,s_k)}^\star$ spaces,
follows from the kernel results Propositions \ref{Prop:KernelsDifficultDirectionExt}
to \ref{Prop:KernelsEasyDirectionNew} and the composition properties of the form
$$
\maclL (\maclA _{s_2}(\cc {d_2}),\maclA _{s_3}(\cc {d_3}))
\circ 
\maclL (\maclA _{s_1}(\cc {d_1}),\maclA _{s_2}(\cc {d_2}))
\subseteq
\maclL (\maclA _{s_1}(\cc {d_1}),\maclA _{s_3}(\cc {d_3}))
$$
and
$$
\maclL (\maclA _{s_2}(\cc {d_2}),\maclA _{s_3}'(\cc {d_3}))
\circ 
\maclL (\maclA _{s_1}(\cc {d_1}),\maclA _{s_2}(\cc {d_2}))
\subseteq
\maclL (\maclA _{s_1}(\cc {d_1}),\maclA _{s_3}'(\cc {d_3})).
$$

\par

Next we prove $\wideparen \maclB _{(s_3,s_2)}\circ
\wideparen \maclC _{(s_2,s_1)}\subseteq
\wideparen \maclC _{(s_3,s_1)}$. Suppose
$K_1\in \wideparen \maclC _{(s_2,s_1)}(W_1)$ and $K_2\in
\wideparen \maclB _{(s_3,s_2)}(W_2)$, and let $K_3=K_2\circ K_1$,
with coefficients $c(K_1;\alpha _2,\alpha _1)$, $c(K_2;\alpha _3,\alpha _2)$
and $c(K_3;\alpha _3,\alpha _1)$, $\alpha _j\in \nn {d_j}$, in their power
series expansions. Also let $\vartheta _{r,s}$ be the same as in Definition
\ref{DefSeqSpaces}. Then
$$
c(K_3;\alpha _3,\alpha _1) = \sum _{\beta \in \nn {d_2}}
c(K_2;\alpha _3,\beta )c(K_1;\beta ,\alpha _1).
$$
By the definitions, there is an $r_{0,1}>0$ such that
$$
|c(K_1;\alpha _2,\alpha _1)|
\lesssim
\frac {\vartheta _{r_{1},s_1}(\alpha _1)}{\vartheta _{r_{0,1},s_2}(\alpha _2)}
$$
holds for every $r_{1}>0$, and for every $r_{2}>0$ there
is an $r_{0,2}>0$ such that
\begin{equation}\label{Eq:CompCoeffEst}
|c(K_2;\alpha _3,\alpha _2)|
\lesssim
\frac {\vartheta _{r_{2},s_2}(\alpha _2)}{\vartheta _{r_{0,2},s_3}(\alpha _3)}
\end{equation}
holds. Hence, if we let $r_{2}<r_{0,1}$ and choose $r_{0,2}>0$ such that
\eqref{Eq:CompCoeffEst} holds, then
$$
|c(K_3;\alpha _3,\alpha _1)|
\lesssim
\sum _{\beta \in \nn {d_2}} \left (
\frac {\vartheta _{r_{2},s_2}(\beta )}{\vartheta _{r_{0,1},s_2}(\beta )}
\right )
\frac {\vartheta _{r_{1},s_1}(\alpha _1)}{\vartheta _{r_{0,2},s_3}(\alpha _3)}
\asymp
\frac {\vartheta _{r_{1},s_1}(\alpha _1)}{\vartheta _{r_{0,2},s_3}(\alpha _3)}
$$
for every $r_{1}>0$. This is the same as $K_3\in
\wideparen \maclC _{(s_3,s_1)}(W_3)$, and the assertion follows. In the same
way, we obtain the other assertions. The details are left for the reader.
\end{proof}

\par

In Section \ref{sec4} we present an analogy of
Theorem \ref{Thm:KernelComp2} for Wick operators (see Theorem
\ref{Thm:WickComp}).

\par

\section{Binomial operators and their continuity
properties}\label{sec3}

\par

In this section we consider linear and bilinear
operators of binomial types which act on subspaces
of $\ell _0'(\nn {2d})$ and which contain binomial expressions.
In the interplay between coefficients of Wick symbols and the
kernels to Wick operators, the
linear binomial operators are the actions on the coefficients
in power series
expansions, when passing between operator kernels, Wick
symbols and anti-Wick symbols of linear operators.
The bilinear binomial operators are in similar ways the actions on
the coefficients in power series expansions, which
correspond to pure multiplications, their adjoint actions,
and compositions of Wick operators.

\par

\subsection{Binomial type operators}

\par

Let $t\in \mathbf C$. Essential parts of our investigations concerns
continuity properties of the linear binomial operators
\begin{alignat}{2}
(\maclT _{0,t} c) (\alpha ,\beta) &= \sum_{\gamma \leq \alpha ,\beta}
\left ( { \alpha  \choose \gamma } { \beta  \choose \gamma } \right )^{1/2}
t^{|\gamma|}
c(\alpha - \gamma, \beta - \gamma) ,&
\quad t &\in \mathbf C,
\label{Eq:T_{0,t}Def}
\intertext{when $c\in \ell _0'(\nn {2d})$, and their formal $\ell ^2$
adjoint, which are given by
$\maclT _{0,\overline t} ^*c$, where}
(\maclT _{0,t} ^*c) (\alpha ,\beta) &= \sum_{\gamma \in \nn d}
\left ( { {\alpha +\gamma}  \choose \gamma }
{ {\beta +\gamma}  \choose \gamma }
\right )^{1/2}
 t^{|\gamma|}
 c(\alpha + \gamma, \beta + \gamma) ,&
\quad t &\in \mathbf C,
\label{Eq:T_{0,t}Dual}
\end{alignat}
when $c\in \ell _0(\nn {2d})$. In terms of the operator
\begin{equation}\label{Eq:SOpDef}
(S_0c)(\alpha ,\beta )=i^{|\alpha +\beta |}c(\alpha ,\beta ),
\qquad
\alpha ,\beta \in \nn d,\ c\in \ell _0'(\nn {2d}),
\end{equation}
we observe that
\begin{align}
\maclT _{0,-t} = S_0^{-1}\circ \maclT _{0,t} \circ S_0
\quad \text{and}\quad
\maclT _{0,-t}^* = S_0^{-1}\circ \maclT _{0,t}^* \circ S_0
\label{Eq:T0tSOpsConj}
\end{align}
in the domains of $\maclT _{0,-t}$ and $\maclT _{0,-t}^*$.

\par

The operators $\maclT _{0,1}$ and $\maclT _{0,1}^*$ are
linked to transitions between
kernel operators and Wick operators, and to transitions between Wick
and anti-Wick operators. In fact, for any fixed $t\in \mathbf C$, let $\maclT _t$
be the map on $\wideparen A(\cc {2d})$, given by
\begin{equation}\label{Eq:TtDef}
(\maclT _ta)(z,w) = e^{t(z,w)}a(z,w),\qquad a\in \wideparen A(\cc {2d})
=\wideparen \maclA _{\flat _1}(\cc {2d}), \quad
a\in \wideparen A(\cc {2d}).
\end{equation}
It follows in particular from \eqref{Eq:AnalPseudo},
\eqref{Eq:Kmap} and \eqref{Eq:Kmap}$'$ that
\begin{equation}\label{Eq:KernelWickRelByT}
T_K = \op _{\mathfrak V}(a)
\quad \Leftrightarrow \quad
K=\maclT _1a,
\end{equation}
when $a\in \wideparen A(\cc {2d})$, because
\begin{equation}\tag*{(\ref{Eq:KernelWickRelByT})$'$}
T_K = \op _{\mathfrak V}(a)
\quad \text{with}\quad
K(z,w)=e^{(z,w)}a(z,w)=(\maclT _1a)(z,w),
\end{equation}

\par

As a consequence of the following proposition we have
\begin{equation}
\maclT _{0,t}c(a;\cdo ) = c(\maclT _ta;\cdo )
\end{equation}
when $a\in \wideparen \maclA _0(\cc {2d})$, which gives the link
between the operator $\maclT _{0,1}$ and transitions between kernel
and Wick operators.
We omit the proof because the result is essentially a restatement of
\cite[Theorem 2.6]{Teofanov2}.

\par

\begin{prop}\label{Prop:TransKernelWickASpaces}
Let $s,s_0\in \overline {\mathbf R_\flat}$ be such that
$s<\frac 12$ and $s_0\le \frac 12$, $t\in \mathbf C$ and let
$T_{\maclA}$ be the map in \eqref{T12Map}.
Then $\maclT _t$ from $\wideparen \maclA _0(\cc {2d})$
to $\wideparen A(\cc {2d})$
extends uniquely to continuous mappings on
$\wideparen \maclA _s'(\cc {2d})$ and on
$\wideparen \maclA _{0,s_0}'(\cc {2d})$, and
the diagrams
\begin{equation}\label{Eq:CommDiagram}
\begin{CD}
\ell _s ' (\nn {2d})    @>\maclT _{0,t}>> \ell _s ' (\nn {2d})
\\
@V T _{\maclA}VV        @VV T_{\maclA}V\\
\wideparen \maclA _s'(\cc {2d})     @>>\maclT _t>
\wideparen \maclA _s'(\cc {2d})
\end{CD}
\quad \text{and}\quad
\begin{CD}
\ell _{0,s_0} ' (\nn {2d})    @>\maclT _{0,t}>> \ell _{0,s_0} ' (\nn {2d})
\\
@V T_{\maclA}VV        @VV T_{\maclA}V\\
\wideparen \maclA _{0,s_0}'(\cc {2d})     @>>\maclT _t>
\wideparen \maclA _{0,s_0}'(\cc {2d})
\end{CD}
\end{equation}
commute.
\end{prop}

\par

Here we observe the misprints in the commutative
diagram (2.5) in \cite{Teofanov2}, where $T_{\maclH}$ and
$\wideparen \maclA _{\flat _1}(\cc {2d})$ should be
replaced by $T_{\maclA}$ and
$\wideparen \maclA _{\flat _1}'(\cc {2d})$, respectively, at each occurrence.

\par

By \eqref{Eq:TtDef}, the commutative diagrams
\eqref{Eq:CommDiagram} and that $\maclT _{0,\overline t}^*$
is the $\ell ^2$ adjoint of $\maclT _{0,t}$ it follows that
\begin{equation}
\begin{alignedat}{2}
\maclT _{0,t_1}\circ \maclT _{0,t_2} &= \maclT _{0,t_1+t_2}, &
\quad
\maclT _{0,t}^{-1} &= \maclT _{0,-t},
\\[1ex]
\maclT _{0,t_1}^*\circ \maclT _{0,t_2}^* &= \maclT _{0,t_1+t_2}^*, &
\quad \text{and}\quad
(\maclT _{0,t}^*)^{-1} &= \maclT _{0,-t}^*,
\quad t,t_1,t_2\in \mathbf C,
\end{alignedat}
\end{equation}
since similar facts hold true for the operator $\maclT _t$.

\medspace

The operator $\maclT _{0,t}^*$ is linked to the operator
$\maclT ^*_t$, given by
\begin{equation}\label{Eq:Tt*Def}
(\maclT ^*_ta)(t_0z,\overline {t_0}w)
= \pi ^{-d}
\int _{\cc d}a({t_0}w_1,\overline {t_0}w_1)
e^{-(z-w_1,w-w_1)}\, d\lambda (w_1),\quad t_0^2=t, 
\end{equation}
when $a\in \wideparen \maclA _0(\cc {2d})$. We observe that due
to the definitions it follows that $a^{\aw} (z,w)$ in \eqref{Eq:AntiWickAnalPseudoRel}
is equal to $\maclT _1^*a(z,w)$. Hence,
\begin{equation}\label{Eq:AntiWicktoWickRevised}
\op _{\mathfrak V}(\maclT _1^*a) = \op _{\mathfrak V}^{\aw}(a)
\quad \text{and}\quad
\op _{\mathfrak V}^{\aw}(\maclT _{-1}^*a) = \op _{\mathfrak V}(a)
\end{equation}
when $a\in \wideparen \maclA _0(\cc {2d})$. 

\par

By a straight-forward
consequence of \eqref{Eq:WickAntiWickBasicTransf2} in Appendix
\ref{App:B} it follows that the diagram
\begin{equation}\label{Eq:CommDiagramDual}
\begin{CD}
\ell _0 (\nn {2d})    @>\maclT _{0,t}^*>> \ell _0 (\nn {2d})
\\
@V T_{\maclA}VV        @VVT_{\maclA}V\\
\wideparen \maclA _0(\cc {2d})     @>>\maclT ^*_t>
\wideparen \maclA _0(\cc {2d})
\end{CD}
\end{equation}
commute. A combination of \eqref{Eq:AntiWicktoWickRevised}
and \eqref{Eq:CommDiagramDual} then gives
\begin{equation}
c(a^{\aw};\cdo ) = \maclT _{0,1}^*c(a;\cdo ),
\end{equation}
when \eqref{Eq:AntiWickAnalPseudoRel} holds and
$a\in \wideparen \maclA _0(\cc {2d})$, which in turn is equivalent
to
\begin{equation}
\op _{\mathfrak V}(b) = \op _{\mathfrak V}^{\aw}(a)
\quad \Leftrightarrow \quad
c(b;\cdo ) = \maclT _{0,1}^*c(a;\cdo ),
\end{equation}
when $a,b\in \wideparen \maclA _0(\cc {2d})$.
This gives the link between $\maclT _{0,\pm 1}^*$ and transitions
between Wick and anti-Wick operators.

\medspace

Next we introduce certain types of bilinear binomial operators.

\par

\begin{defn}\label{Def:BilinearBinomialOps}
Let $t\in \mathbf C$. 
\begin{enumerate}
\item
The bilinear operator
$(c_1,c_2)\mapsto (c_1\bullet _t c_2)$
from $\ell _0(\nn d )\times \ell _0(\nn d )$ to
$\ell _0(\nn d )$ is given by
\begin{equation}\label{Eq:BilinearBinomOpAdj}
(c_1 \bullet _t c_2)(\alpha )
\equiv
\sum _{\gamma \le \alpha }
{{\alpha }\choose {\gamma }}^{\frac 12}
t^{|\gamma |}
c_1(\alpha -\gamma )
c_2(\gamma ).
\end{equation}

\vrum

\item
The bilinear operator
$(c_1,c_2)\mapsto (c_1\bullet _{(t,\diamond )} c_2)$
from $\ell _0(\nn d )\times \ell _0(\nn d )$ to
$\ell _0(\nn d )$ is given by
\begin{equation}\label{Eq:BilinearBinomOp}
(c_1 \bullet _{(t,\diamond )} c_2)(\alpha )
\equiv
t^{|\alpha |}\sum _{\gamma \in \nn {d}}
{{\alpha +\gamma} \choose {\gamma}}^{\frac 12}
c_1(\gamma )
c_2(\alpha +\gamma ).
\end{equation}

\vrum

\item
The bilinear operator 
$(c_1,c_2)\mapsto (c_1\bullet _{(t,\wpr )} c_2)$
from $\ell _0(\nn {2d} )\times \ell _0(\nn {2d})$ to
$\ell _0(\nn {2d})$ is given by
\begin{multline}\label{Eq:WickCoeffComp2}
(c_1\bullet _{(t,\wpr )} c_2)(\alpha ,\beta )
\\[1ex]
\equiv
\sum 
\left (
{{\alpha }\choose {\alpha _0}}{{\beta }\choose {\beta _0}}
{{\alpha -\alpha _0+\gamma}\choose {\gamma}}
{{\beta -\beta _0+\gamma}\choose {\gamma}}
\right )^{\frac 12}t^{|\gamma |}
c_1(\alpha _0,\beta -\beta _0+\gamma)
c_2(\alpha -\alpha _0+\gamma ,\beta _0),
\end{multline}
where the sum is taken over all $\alpha _0,\beta _0,\gamma \in \nn d$
such that $\alpha _0\le \alpha$, $\beta _0\le \beta$.
\end{enumerate}
\end{defn}

\par

It follows by straight-forward computations that
\begin{align}
(c_1\bullet _t c_2)(\alpha )
&=
t^{|\alpha |}(c_2\bullet _{1/t} c_1)(\alpha )
\label{Eq:BilinBinomComm}
\intertext{and}
(c_1\bullet _t c_2,c_3)_{\ell ^2(\nn d)}
&= 
(c_2,\overline {c_1}\bullet _{(\overline t,\diamond )}
c_3)_{\ell ^2(\nn d)},
\label{Eq:BilinBinomAdj}
\end{align}
$c_j\in \ell _0(\Lambda )$, $j=1,2,3$.

\par

The different products in Definition \ref{Def:BilinearBinomialOps} are
linked into different bilinear mappings for spaces of power series expansions.
Let $W=\cc {d_2}\times \cc {d_1}$ and
$K_1,K_2\in \wideparen \maclA _0(W)$. Then
\begin{gather*}
K_1(z_2,z_1)K_2(z_2,z_1) = \sum _{\alpha _{j}\beta _{j}}
c(K_1;\alpha _{2},\alpha _{1}) c(K_2;\beta _{2},\beta _{1})
e_{\alpha _2}(z_2)e_{\beta _2}(z_2)
e_{\alpha _1}(z_1)e_{\beta _1}(z_1)
\\[1ex]
= \sum _{\alpha _{1}\alpha _{2}}
\left (
\sum _{\gamma _j\le \alpha _j} \left (
{{\alpha _1}\choose {\gamma _1}} {{\alpha _2}\choose {\gamma _2}}
\right ) ^{\frac 12}
c(K_1;\alpha _{2}-\gamma _2,\alpha _{1}-\gamma _1) 
c(K_2;\gamma _{2},\gamma _{1})
\right )
e_{\alpha _{2}}(z_2)e_{\alpha _{1}}(z_1),
\end{gather*}
and it follows that
\begin{equation}\label{Eq:MultCoeffRel}
c(K_1\cdot K_2;\cdo ) = c(K_1;\cdo ) \bullet _1 c(K_2;\cdo ).
\end{equation}

\par

By \eqref{Eq:BilinBinomAdj} it follows that $\bullet _{(1,\diamond )}$
is the adjoint operation of $\bullet _1$ on elements in $\ell _0(\Lambda)$.
In order to find corresponding
relationship for elements in $\wideparen \maclA _0(W)$, we observe  that the adjoint operation of $z_j$ is $\partial _{z_j}$. In fact,
if $F,G\in \maclA _{0}(\cc d)$, then it follows by integrating by parts
that
\begin{equation}\label{Eq:MonomMultAdjoint}
(z_jF,G)_{A^2} = (F,\partial _jG)_{A^2}
\quad \text{and}\quad
(\partial _jF,G)_{A^2} = (F,z_jG)_{A^2}.
\end{equation}
Hence, by letting $\diamond$ be the multiplication on $\maclA _0(\cc d)$,
given by the formula
\begin{equation}\label{Eq:ComplFourMult}
(F_1\diamond F_2,F)_{A^2} = (F_2,F_0\cdot F)_{A^2},
\qquad
F_0(z) = \overline {F_1(\overline z)},
\end{equation}
when $F,F_1,F_2\in \maclA _0(\cc d)$, it follows that
\begin{equation}\label{Eq:MonomMultAdjoint2}
(F_1\diamond F_2)(z)
= 
\sum _{\alpha \in \nn d}
c(F_1,\alpha )\frac {\partial _z^\alpha F_2(z)}{\alpha !^{\frac 12}}.
\end{equation}
Since
$\op _{\mathfrak V}^{\aw}(\overline w_j)= \partial _j=\partial _{z_j}$,
it also follows that
$$
F_1\diamond F_2 =\op _{\mathfrak V}^{\aw}(F_1)F_2=F_1(\nabla _z)F_2.
$$

\par

In similar ways, if $W=\cc {d_2}\times \cc {d_1}$ and
$K_1,K_2\in \wideparen \maclA _0(W)$, then $K_1\diamond K_2$ is defined
by
\begin{equation}\label{Eq:MonomMultAdjoint3}
(K_1\diamond K_2)(z_2,z_1)
= 
\sum _{\alpha _1\in \nn {d_1}}\sum _{\alpha _2\in \nn {d_2}}
c(K_1;\alpha _2,\alpha _1)\frac {\partial _{z_2}^{\alpha _2}
\partial _{\overline z_1}^{\alpha _1} K_2(z_2,z_1)}
{(\alpha _1!\alpha _2!)^{\frac 12}}.
\end{equation}
It follows that
\begin{equation}\label{Eq:ComplFourMult2}
(K_1\diamond K_2,K)_{\wideparen A^2}
=
(K_2,K_0\cdot K)_{\wideparen A^2},
\qquad
K_0(z_2,z_1) = \overline {K_1(\overline z_2,\overline z_1)},
\end{equation}
when $K,K_1,K_2\in \wideparen \maclA _0(W)$. A combination of
\eqref{Eq:BilinBinomAdj}, \eqref{Eq:MultCoeffRel} and
\eqref{Eq:ComplFourMult2}, now gives
\begin{equation}\label{Eq:MultDiamCoeffRel}
c(K_1\diamond K_2;\cdo ) = c(K_1;\cdo )
\bullet _{(1,\diamond )} c(K_2;\cdo ).
\end{equation}

\par

\begin{example}
We observe that
\eqref{Eq:MonomMultAdjoint2} and straight-forward computations
give
\begin{equation}\label{Eq:MonomMultAdjointBasic}
e_\alpha \diamond e_\beta =
\begin{cases}
{\beta \choose \alpha }^{\frac 12}e_{\beta -\alpha}, & \text{when}
\ \beta \ge \alpha ,
\\[1ex]
0, & \text{otherwise},
\end{cases}
\end{equation}
\end{example}

\medspace

The following lemma shows that
$\bullet _{(1,\wpr)}$ in Definition \ref{Def:BilinearBinomialOps}
is linked to compositions of Wick operators on the symbol side.

\par

\begin{lemma}\label{Lemma:WickCoeffComp}
Let $W=\cc d\times \cc d$, $\bullet _{(1,\wpr )}$ be the
multiplication on $\ell _0(\nn d\times \nn d)$ given by
\eqref{Eq:WickCoeffComp2}, and let
$$
u_{\alpha ,\beta}(z,w) = e_\alpha (z)e_\beta (\overline w)
\in \wideparen \maclA _0(W).
$$
Then the following is true:
\begin{enumerate}
\item if $\alpha _j,\beta _j\in \nn d$, $j=1,2$, then
\begin{multline}\label{Eq:WickCoeffComp1}
u_{\alpha _1,\beta _1} \wpr _{\mathfrak V} u_{\alpha _2,\beta _2}
\\[1ex]
= \!\!
\sum _{\gamma \le \alpha _2,\beta _1}\!\!
\left (
{{\alpha _2}\choose {\gamma}}{{\beta _1}\choose {\gamma}}
{{\alpha _1+\alpha _2-\gamma}\choose {\alpha _1}}
{{\beta _1+\beta _2-\gamma}\choose {\beta _2}}
\right )^{\frac 12}
u_{\alpha _1+\alpha _2-\gamma ,\beta _1+\beta _2-\gamma}
\text ;
\end{multline}

\vrum

\item
if $a_1,a_2 \in \wideparen \maclA _0(W)$,
then
\begin{equation}\label{Eq:MultWprCoeffRel}
c(a_1\wpr _{\mathfrak V}a_2;\cdo )= c(a_1;\cdo )
\bullet _{(1,\wpr )} c(a_2;\cdo ).
\end{equation}
\end{enumerate}
\end{lemma}

\par

\begin{proof}
By using that
$$
I_{\gamma ,\delta} = \pi ^{-d}\int _{\cc d}w^\gamma {\overline w}^\delta
e^{-|w|^2}\, d\lambda (w) =
\begin{cases}
\gamma ! , & \gamma =\delta
\\[1ex]
0 , & \gamma \neq \delta ,
\end{cases}
$$
we get
\begin{multline*}
\pi ^{-d}\int _{\cc d} e_\alpha (w_1)e_\beta (\overline w_1)
e^{-|z-w_1|^2}\, d\lambda (w_1)
=
\frac 1{\pi ^{d}(\alpha !\beta !)^{\frac 12}}\sum _{\gamma \le \alpha}
\sum _{\delta \le \beta} {{\alpha}\choose {\gamma}}{{\beta}\choose {\delta}}
z^{\alpha -\gamma}{\overline z}^{\beta -\delta}I_{\gamma ,\delta}
\\[1ex]
=
\sum _{\gamma \le \alpha ,\beta}
\left (
{{\alpha}\choose {\gamma}}{{\beta}\choose {\gamma}}
\right )^{\frac 12}
e_{\alpha -\gamma}(z)e_{\beta -\delta}(\overline z).
\end{multline*}
This gives
\begin{multline}\label{Eq:BasicIntegral1}
\pi ^{-d}\int _{\cc d} e_\alpha (w_1)e_\beta (\overline w_1)
e^{-(z-w_1,w-w_1)}\, d\lambda (w_1)
\\[1ex]
=
\sum _{\gamma \le \alpha ,\beta}
\left (
{{\alpha}\choose {\gamma}}{{\beta}\choose {\gamma}}
\right )^{\frac 12}
e_{\alpha -\gamma}(z)e_{\beta -\delta}(\overline w).
\end{multline}

\par

Hence, if $F_{\alpha ,\beta}(z,w)$ is the left-hand side of
\eqref{Eq:BasicIntegral1}, then we obtain
\begin{align*}
(u_{\alpha _1,\beta _1}\wpr _{\mathfrak V} u_{\alpha _2,\beta _2})(z,w)
&=
e_{\alpha _1}(z)e_{\beta _2}(\overline w)F_{\alpha _2,\beta _1}(z,w)
\\[1ex]
&=
\sum _{\gamma \le \alpha _2,\beta _1}
\left (
{{\alpha _2}\choose {\gamma}}{{\beta _1}\choose {\gamma}}
\right )^{\frac 12}
(e_{\alpha _2-\gamma}(z)e_{\alpha _1}(z))
(e_{\beta _1-\delta}(\overline w)e_{\beta _2}(\overline w)),
\end{align*}
which after some straight-forward computations lead to
the right-hand side of \eqref{Eq:WickCoeffComp1}. This gives
(1).

\par

Let
\begin{align*}
C_1(\alpha ,\beta ,\gamma )
&=
\left (
{{\alpha _2}\choose {\gamma}}{{\beta _1}\choose {\gamma}}
{{\alpha _1+\alpha _2-\gamma}\choose {\alpha _1}}
{{\beta _1+\beta _2-\gamma}\choose {\beta _2}}
\right )^{\frac 12}
\intertext{and}
C_2(\alpha ,\beta ,\gamma )
&=
\left (
{{\alpha _2}\choose {\alpha _1}}{{\beta _1}\choose {\beta _2}}
{{\alpha _2-\alpha _1+\gamma}\choose {\gamma}}
{{\beta _1-\beta _2+\gamma}\choose {\gamma}}
\right )^{\frac 12},
\end{align*}
for admissible $\alpha =(\alpha _1,\alpha _2)\in \nn {2d}$,
$\beta =(\beta _1,\beta _2)\in \nn {2d}$ and $\gamma \in \nn d$.
By (1) we have
\begin{multline*}
(a_1\wpr _{\mathfrak V}a_2)(z,w)
\\[1ex]
=
\sum _{\alpha _j,\beta _j} 
\sum _{\gamma \le \alpha _2,\beta _1}
C_1(\alpha ,\beta ,\gamma )c(a_1;\alpha _1,\beta _1)c(a_2;\alpha _2,\beta _2)
e_{\alpha _1+\alpha _2-\gamma}(z)e_{\beta _1+\beta _2-\gamma}(\overline w)
\\[1ex]
=
\sum _{\alpha _1,\beta _2}
\sum _{\gamma } \sum _{\alpha _2,\beta _1\ge \gamma}
C_1(\alpha ,\beta ,\gamma )c(a_1;\alpha _1,\beta _1)c(a_2;\alpha _2,\beta _2)
e_{\alpha _1+\alpha _2-\gamma}(z)e_{\beta _1+\beta _2-\gamma}(\overline w).
\end{multline*}
By taking
$$
\alpha _1 ,\quad  \alpha _1+\alpha _2-\gamma,
\quad \beta _1+\beta _2-\gamma, \quad
\beta _2
\quad \text{and}\quad
\gamma
$$
as new variables of summations, we get
\begin{multline*}
(a_1\wpr _{\mathfrak V}a_2)(z,w)
\\[1ex]
=
\sum _{\alpha _1,\beta _2}
\left (
\sum _{\gamma }
\underset {\beta _1\le \beta _1}{\sum _{\alpha _1\le \alpha _2}}
C_2(\alpha ,\beta ,\gamma)
c(a_1; \alpha _1,\beta _1-\beta _2+\gamma )
c(a_2; \alpha _2-\alpha _1+\gamma ,\beta _2)
\right )
e_{\alpha _2}(z)e_{\beta _1}(\overline w).
\end{multline*}
This is the same as \eqref{Eq:MultWprCoeffRel}, and the result follows.
\end{proof}

\par

\subsection{Continuity for binomial operators}

\par

The assertion (1) in the following proposition
was deduced in \cite{Teofanov2}, and (2) follows
from (1) and duality. The details are left for the reader.

\par

\begin{prop}\label{Prop:T0tBasichomeomorphism}
Let $t \in \mathbf C$, $ s,s_0 \in \overline{\mathbf R_\flat}$ be such that
$ s < \frac 12$ and $ 0 < s_0 \leq \frac 12$,
$\maclT _{0,t},S_0\in \maclL (\ell _0' (\nn {2d}))$ be given by \eqref{Eq:T_{0,t}Def}
and \eqref{Eq:SOpDef}, and $\maclT _{0,t}^*\in \maclL (\ell _0(\nn {2d}))$ be given by
\eqref{Eq:T_{0,t}Dual}.
Then the following is true:
\begin{enumerate}
\item $\maclT _{0,t} $ is a homeomorphism on $ \ell _0' (\nn {2d})$
with the inverse  $\maclT _{0,-t} $. Furthermore,  $\maclT _{0,t} $ restricts to
homeomorphisms on $ \ell _s ' (\nn {2d}) $ and
on $ \ell _{0,s_0} ' (\nn {2d})$, and $\maclT _{0,-t}$ can be obtained from
\eqref{Eq:T0tSOpsConj};

\vrum

\item $\maclT _{0,t}^*$ is a homeomorphism on $ \ell _0(\nn {2d})$
with the inverse  $\maclT _{0,-t}^*$. Furthermore,  $\maclT _{0,t}^*$ is uniquely extendable
to homeomorphisms on $ \ell _s (\nn {2d}) $ and
on $ \ell _{0,s_0} (\nn {2d})$, and $\maclT _{0,-t}^*$ can be obtained from
\eqref{Eq:T0tSOpsConj}.
\end{enumerate}
\end{prop}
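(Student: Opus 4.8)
The plan is to take assertion~(1) directly from \cite{Teofanov2} and to obtain assertion~(2) from it by transposition with respect to the bilinear $\ell^2$-form. For~(1) I would only recall the mechanism and refer to \cite{Teofanov2} for the details: the operator $T_{0,t}$ is well defined on all of $\ell_0'(\nn{2d})$ since the sum in \eqref{Eq:T_{0,t}Def} ranges over the finite set $\gamma\le\alpha,\beta$; the identity $T_{0,t}\circ T_{0,-t}=\mathrm{Id}$, which exhibits $T_{0,-t}$ as the inverse, reduces after reindexing to a Vandermonde-type binomial identity; and continuity on $\ell_s'(\nn{2d})$ for $s<\tfrac12$ and on $\ell_{0,s_0}'(\nn{2d})$ for $0<s_0\le\tfrac12$ uses the bound $\binom{\alpha}{\gamma}^{1/2}\binom{\beta}{\gamma}^{1/2}\le 2^{(|\alpha|+|\beta|)/2}$ together with the fact that for $s<\tfrac12$ the exponent $1/(2s)>1$, so the weights $\vartheta_{r,s}(\alpha)=e^{r|\alpha|^{1/(2s)}}$ are superexponential and absorb both this factor and $t^{|\gamma|}$.

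For~(2) the key identity is that $T_{0,t}^*$ of \eqref{Eq:T_{0,t}Dual} is the transpose of $T_{0,t}$ for the bilinear form $\scal{c}{d}=\sum_{\alpha,\beta}c(\alpha,\beta)d(\alpha,\beta)$. Indeed, for $c\in\ell_0'(\nn{2d})$ and $d\in\ell_0(\nn{2d})$ the substitution $(\alpha,\beta)\mapsto(\alpha-\gamma,\beta-\gamma)$ in $\scal{T_{0,t}c}{d}$ converts $\binom{\alpha}{\gamma}^{1/2}\binom{\beta}{\gamma}^{1/2}$ into $\binom{\alpha+\gamma}{\gamma}^{1/2}\binom{\beta+\gamma}{\gamma}^{1/2}$ and yields $\scal{T_{0,t}c}{d}=\scal{c}{T_{0,t}^*d}$; in particular $T_{0,t}^*$ maps $\ell_0(\nn{2d})$ into itself.

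I would then invoke, from the definitions in Subsection~\ref{subsec1.3} and \cite{Toft18}, that relative to this form $\ell_s(\nn{2d})$ and $\ell_{0,s_0}(\nn{2d})$ are the strong duals of $\ell_s'(\nn{2d})$ and $\ell_{0,s_0}'(\nn{2d})$, respectively. By~(1), $T_{0,t}$ and $T_{0,-t}$ are mutually inverse homeomorphisms on $\ell_s'(\nn{2d})$ and on $\ell_{0,s_0}'(\nn{2d})$; since the transpose of a topological isomorphism is a topological isomorphism of the strong duals and commutes with inversion, their transposes $T_{0,t}^*$ and $T_{0,-t}^*$ are mutually inverse homeomorphisms on $\ell_s(\nn{2d})$ and on $\ell_{0,s_0}(\nn{2d})$. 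These extend the $\ell_0$-operator $T_{0,t}^*$, uniquely because $\ell_0(\nn{2d})$ is dense there. The conjugation formula for $T_{0,-t}^*$ in \eqref{Eq:T0tSOpsConj} follows by transposing $T_{0,-t}=S_0^{-1}\circ T_{0,t}\circ S_0$, since $S_0$ in \eqref{Eq:SOpDef} is diagonal and hence self-transpose; alternatively it is checked directly, as $i^{2|\gamma|}=(-1)^{|\gamma|}$ turns $t^{|\gamma|}$ into $(-t)^{|\gamma|}$ in \eqref{Eq:T_{0,t}Dual}.

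The only genuine obstacle is the bookkeeping in this duality step: one must match the inductive and projective limits so that the strong dual of the projective limit $\ell_s'$ is the inductive limit $\ell_s$ while the strong dual of the inductive limit $\ell_{0,s_0}'$ is the projective limit $\ell_{0,s_0}$, and one must verify that the concrete transpose is exactly $T_{0,t}^*$ of \eqref{Eq:T_{0,t}Dual} rather than a complex-conjugated variant. Once the pairing and these identifications are fixed, the homeomorphism property of~(2) transfers formally from~(1).
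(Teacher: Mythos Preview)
Your proposal is correct and follows precisely the paper's own approach: the paper states that assertion~(1) was deduced in \cite{Teofanov2} and that (2) follows from (1) and duality, without further elaboration. Your write-up supplies exactly the duality argument the paper leaves implicit, including the identification of $T_{0,t}^*$ as the bilinear transpose of $T_{0,t}$ and the transfer of the homeomorphism property via strong duals, so there is nothing to add or correct.
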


\par

We have now the following complementary result to
Proposition \ref{Prop:T0tBasichomeomorphism}.

\par

\begin{prop}\label{Prop:T0thomeomorphismEllC}
Let $t \in \mathbf C$, $s,s_0 \in \overline{\mathbf R_\flat}$ be such that
$s < \frac 12$ and $0 < s_0 \leq \frac 12$,
$\maclT _{0,t}, S_0\in \maclL (\ell _0' (\nn {2d}))$ given by \eqref{Eq:T_{0,t}Def}
and \eqref{Eq:SOpDef}, and $\maclT _{0,t}^*\in \maclL (\ell _0(\nn {2d}))$ given by
\eqref{Eq:T_{0,t}Dual}. Then the following is true:
\begin{enumerate}
\item
the map $\maclT _{0,t}$ restricts to homeomorphisms on
\begin{alignat}{4}
&\ell _{\maclB ,s} (\nn {2d}), &
\quad
&\ell _{\maclB ,s} ^\star (\nn {2d}), &
\quad
&\ell _{\maclB ,0,s_0} (\nn {2d}), &
\quad 
&\ell _{\maclB ,0,s_0} ^\star (\nn {2d}),
\label{Eq:T0thomeomorphismEllC1}
\end{alignat}
with inverse $\maclT _{0,-t}$, and \eqref{Eq:T0tSOpsConj} holds;

\vrum

\item
the map $\maclT _{0,t}^*$ on $\ell _0(\nn {2d})$ extends uniquely
to homeomorphisms on
\begin{alignat}{4}
&\ell _{\maclC ,s} (\nn {2d}), &
\quad
&\ell _{\maclC ,s} ^\star (\nn {2d}), &
\quad
&\ell _{\maclC ,0,s_0} (\nn {2d}), &
\quad 
&\ell _{\maclC ,0,s_0} ^\star (\nn {2d}),
\label{Eq:T0thomeomorphismEllC2}
\end{alignat}
with inverse $\maclT _{0,-t}^*$, and \eqref{Eq:T0tSOpsConj} holds.
\end{enumerate}
\end{prop}

\par

\begin{proof}
We only prove (1) for the spaces
$\ell _{\maclB ,s} (\nn {2d})$, as well as for
$\ell _{\maclB ,0,s_0} (\nn {2d})$ when $s_0=\frac 12$, and we only
prove (2) for $\ell _{\maclC ,s} (\nn {2d})$. 
The other assertions follow by similar arguments
and are left for the reader.

\par

First suppose that $s<\frac 12$, $r_{0,j},r_j,\theta >0$, $j=1,2$, and that
\begin{equation}\label{Eq:CondSeqvartheta}
|c(\alpha ,\beta )|\lesssim
\frac {\vartheta _{r_2,s}(\beta )}{\vartheta _{r_{0,2},s}(\alpha )}.
\end{equation}
Then it follows from standard estimates of binomial coefficients that
\begin{align}
|(\maclT _{0,t}c)(\alpha ,\beta )
{\vartheta _{r_{0,1},s}(\alpha )}/{\vartheta _{r_1,s}(\beta )}|
&\lesssim
\sum _{\gamma \le \alpha ,\beta}
{\alpha \choose \gamma}^{\frac 12}{\beta \choose \gamma}^{\frac 12}
|t|^{|\gamma|}
\frac {\vartheta _{r_2,s}(\beta -\gamma )\vartheta _{r_{0,1},s}(\alpha )}
{\vartheta _{r_{0,2},s}(\alpha -\gamma )\vartheta _{r_1,s}(\beta )}
\notag
\\[1ex]
&\le
(2+2|t|)^{|\alpha +\beta |}
\sum _{\gamma \le \alpha ,\beta}
\frac {\vartheta _{r_2,s}(\beta -\gamma )\vartheta _{r_{0,1},s}(\alpha )}
{\vartheta _{r_{0,2},s}(\alpha -\gamma )\vartheta _{r_1,s}(\beta )}.
\label{Eq:EstTotOp}
\end{align}

\par

Suppose that $c\in \ell _{\maclB ,s}(\nn {2d})$ and let $r_1>0$
be arbitrary but fixed. Let $c_0\in (0,1)$ be a small constant which shall be
chosen later on. By Lemma \ref{Lemma:ThetaWeightBasic},
we may choose $r_2<r_1$ and then $r_{0,2}\in (0,r_2)$ such that
$$
\vartheta _{r_2,s}(\beta -\gamma )\vartheta _{r_2,s}(\gamma )
\lesssim
\vartheta _{c_0r_1,s}(\beta )
$$
and that \eqref{Eq:CondSeqvartheta}
holds. Then \eqref{Eq:EstTotOp} gives
\begin{align*}
|(\maclT _{0,t}c)(\alpha ,\beta )
{\vartheta _{r_{0,1},s}(\alpha )}/{\vartheta _{r_1,s}(\beta )}|
&\lesssim
(2+2|t|)^{|\alpha +\beta |}\frac {\vartheta _{c_0r_1,s}(\beta )}
{\vartheta _{r_1,s}(\beta )}
\sum _{\gamma \le \alpha ,\beta}
\frac {\vartheta _{r_{0,1},s}(\alpha )}
{\vartheta _{r_{0,2},s}(\alpha -\gamma )\vartheta _{r_2,s}(\gamma )}
\\[1ex]
&\lesssim
(2+2|t|)^{|\alpha +\beta |}\frac
{\vartheta _{r_{0,1},s}(\alpha )\vartheta _{c_0r_1,s}(\beta )}
{\vartheta _{r_{0,2}/C,s}(\alpha )\vartheta _{r_1,s}(\beta )}
\left (\sum _{\gamma \le \alpha ,\beta} 1 \right )
\\[1ex]
&\lesssim
(3+2|t|)^{|\alpha +\beta |}\frac
{\vartheta _{r_{0,1},s}(\alpha )\vartheta _{c_0r_1,s}(\beta )}
{\vartheta _{r_{0,2}/C,s}(\alpha )\vartheta _{r_1,s}(\beta )}
\end{align*}

\par

Since $s<\frac 12$, it follows that
$$
(3+2|t|)^{|\alpha |}\frac {\vartheta _{r_{0,1},s}(\alpha )}
{\vartheta _{r_{0,2}/C,s}(\alpha )} \le C_0
\quad \text{and}\quad
(3+2|t|)^{|\beta |}\frac
{\vartheta _{c_0r_1,s}(\beta )}
{\vartheta _{r_1,s}(\beta )}\le C_0,
$$
for some constant $C_0>0$ which is independent of $\alpha$
and $\beta$, provided $c_0$ and $r_{0,1}>0$ are chosen small enough.
A combination of these estimates shows that
for every $r>0$, there is an $r_0>0$
such that
\begin{equation}\label{Eq:FinalEstT0tOp}
|(\maclT _{0,t}c)(\alpha ,\beta )|
\lesssim
\frac
{\vartheta _{r,s}(\beta )}
{\vartheta _{r_0,s}(\alpha )},
\end{equation}
which implies that $\maclT _{0,t}c \in \ell _{\maclB ,s}(\nn {2d})$. Since it is
clear that the choice of $r_0$ depends continuously of the interplay between
$r_2$ and $r_{0,2}$ in \eqref{Eq:CondSeqvartheta}, it also
follows that $\maclT _{0,t}$ is continuous on $\ell _{\maclB ,s}(\nn {2d})$,
which proves the asserted continuity for $\maclT _{0,t}$ on 
$\ell _{\maclB ,s}(\nn {2d})$.

\par

Next suppose that $c\in \ell _{\maclB ,0,s_0}(\nn {2d})$ with
$s_0=\frac 12$. That is,
for every $r>0$, there is an $r_0>0$ such that
\begin{equation}\label{Eq:TotContNonflatLast}
|c(\alpha ,\beta )|\lesssim e^{-r|\alpha |+r_0|\beta |}.
\end{equation}
We shall prove that for every $r>0$, there is an $r_0>0$ such that
\begin{equation}\label{Eq:TotContNonflatLastOp}
|(\maclT _{0,t}c)(\alpha ,\beta )|\lesssim e^{-r|\alpha |+r_0|\beta |},
\end{equation}
and then it suffices to prove this when $r_0\ge r$.
By Cauchy-Schwartz inequality we get
\begin{align*}
|(\maclT _{0,t}c)(\alpha ,\beta )|
&\lesssim
\sum _{\gamma \le \alpha ,\beta}
{\alpha \choose \gamma}^{\frac 12}{\beta \choose \gamma}^{\frac 12}
|t|^{|\gamma|}
e^{-r|\alpha -\gamma |+r_0|\beta -\gamma |}
\\[1ex]
&\le
e^{-r|\alpha |+r_0|\beta |}
\left (
\sum _{\gamma \le \alpha}
{\alpha \choose \gamma}
|t|^{|\gamma|}
\right )^{\frac 12}
\left (
\sum _{\gamma \le \beta}
{\beta \choose \gamma}
|t|^{|\gamma|}
\right )^{\frac 12}
\\[1ex]
&=
e^{-(r-r_t)|\alpha |+(r_0+r_t)|\beta |},
\end{align*}
where
$$
r_t= \frac 12\ln (1+|t|).
$$
Since $r$ can be chosen arbitrarily large, it follows that for every
$r>0$, there is an $r_0>0$ such that \eqref{Eq:TotContNonflatLastOp}
holds. Consequently, $\maclT _{0,t}$ is continuous on
$\ell _{\maclB ,0,s_0}(\nn {2d})$.

\par

Next we prove (2) when $\maclT _{0,t}^*$ acts on
$\ell _{\maclC ,s} (\nn {2d})$. Suppose that $c\in
\ell _{\maclC ,s} (\nn {2d})$. Then there is an $r_0=r_{0,2}>0$
such that \eqref{Eq:CondSeqvartheta} holds for every $r=r_2>0$.
We may assume that $r<c_0r_0$ for some constant $c_0\in (0,1)$ which
shall be determined later on.

\par

By Lemma \ref{Lemma:ThetaWeightBasic} we obtain
\begin{align*}
|(\maclT _{0,t}^*c)(\alpha ,\beta )|
&\le
\sum _{\gamma}
\left (
{{\alpha +\gamma} \choose \gamma}
{{\beta +\gamma} \choose \gamma}
\right )^{\frac 12}|t|^{|\gamma |}|c(\alpha +\gamma ,\beta +\gamma )|
\\[1ex]
&\lesssim
2^{\frac 12 |\alpha +\beta |}
\sum _{\gamma}
|2t|^{|\gamma |}|
\frac {\vartheta _{r,s}(\beta +\gamma )}
{\vartheta _{r_0,s}(\alpha +\gamma )}
\\[1ex]
&\lesssim
\frac {2^{\frac 12 |\alpha +\beta |}\vartheta _{C_1r,s}(\beta )}
{\vartheta _{r_0/C_1,s}(\alpha )}\cdot J
\\[1ex]
&\lesssim
\frac {\vartheta _{C_2r,s}(\beta )}
{\vartheta _{r_0/C_2,s}(\alpha )}\cdot J,
\end{align*}
for some constants $C_2>C_1\ge 1$ which are independent of $c_0$, $r$ and
$r_0$, where
$$
J=
\sum _{\gamma}
|2t|^{|\gamma |}|
\frac {\vartheta _{C_1c_0r_0,s}(\gamma )}
{\vartheta _{r_0/C_1,s}(\gamma )}.
$$
In the last inequality we have used the fact that $s<\frac 12$.

\par

By choosing $c_0$ small enough it follows that $J$ is convergent. This shows
that for some $r_0>0$, \eqref{Eq:FinalEstT0tOp} holds for every $r>0$.
That is, $\maclT _{0,t}c \in \ell _{\maclC ,s}(\nn {2d})$
when $c \in \ell _{\maclC ,s}(\nn {2d})$. The continuity assertions of
$\maclT _{0,t}$ on $\ell _{\maclC ,s}(\nn {2d})$ now follows from
this fact and that $r_0$ in \eqref{Eq:FinalEstT0tOp} depends
continuously on $r_{0,2}$ in \eqref{Eq:CondSeqvartheta}.
This gives the result.
\end{proof}

\par

In the following three propositions we
show that the bilinear mappings in Definition
\ref{Def:BilinearBinomialOps} can be extended in suitable ways.

\par

\begin{prop}\label{Prop:RingModPropAnSpacesHelpAdj}
Let $t\in \mathbf C$, $s_1,s_2\in \overline {\mathbf R}_{\flat}$
be such that $s_1,s_2< \frac 12$,
$s=(s_1,s_2)$, $\Lambda
=\nn {d_2}\times \nn {d_1}$ and let $U_t$ be the map from
$\ell _0(\Lambda )\times \ell _0(\Lambda )$
to $\ell _0(\Lambda )$, given by $U_t(c_1,c_2)=
c_1 \bullet _t c_2$, where 
$c_1\bullet _t c_2$ is as in Definition
\ref{Def:BilinearBinomialOps}.
Then $U_t$ is uniquely extendable to continuous mappings
\begin{equation}
\begin{alignedat}{2}
U_t &: \ell _{\maclA ,s} (\Lambda )\times \ell _{\maclA ,s} (\Lambda )
\to \ell _{\maclA ,s} (\Lambda ), &
\quad
U_t &: \ell _{\maclA ,s}^\star (\Lambda )\times
\ell _{\maclA ,s}^\star (\Lambda )
\to
\ell _{\maclA ,s} ^\star (\Lambda ),
\\[1ex]
U_t &: \ell _{\maclB ,s} (\Lambda )\times \ell _{\maclB ,s} (\Lambda )
\to \ell _{\maclB ,s} (\Lambda ), &
\quad
U_t &: \ell _{\maclB ,s}^\star (\Lambda )\times
\ell _{\maclB ,s}^\star (\Lambda )
\to
\ell _{\maclB ,s} ^\star (\Lambda ),
\\[1ex]
U_t &: \ell _{\maclC ,s} (\Lambda )\times \ell _{\maclC ,s} (\Lambda )
\to \ell _{\maclC ,s} (\Lambda ) &
\quad \text{and}\quad
U_t &: \ell _{\maclC ,s}^\star (\Lambda )\times
\ell _{\maclC ,s}^\star (\Lambda )
\to
\ell _{\maclC ,s} ^\star (\Lambda ).
\end{alignedat}
\end{equation}

\par

If instead $0<s_1,s_2\le \frac 12$, then
the same holds true with
$\ell _{\maclA ,0,s}$, $\ell _{\maclB ,0,s}$
and $\ell _{\maclC ,0,s}$ in place of
$\ell _{\maclA ,s}$, $\ell _{\maclB ,s}$
and $\ell _{\maclC ,s}$, respectively, at
each occurrence.
\end{prop}

\par

\begin{prop}\label{Prop:RingModPropAnSpacesHelp}
Let $t\in \mathbf C$, $s_1,s_2\in \overline {\mathbf R}_{\flat}$
be such that $s_1,s_2< \frac 12$, $s=(s_1,s_2)$,
$\Lambda =\nn {d_2}\times \nn {d_1}$ and let $U_{t,\diamond}$ be the map from
$\ell _0(\Lambda )\times \ell _0(\Lambda )$
to $\ell _0(\Lambda )$, given by $U_{t,\diamond}(c_1,c_2)=
c_1 \bullet _{(t ,\diamond )} c_2$, where 
$c_1\bullet _{(t,\diamond )} c_2$ is as in Definition
\ref{Def:BilinearBinomialOps}.
Then $U_{t,\diamond}$ is uniquely extendable to continuous mappings
\begin{equation}
\begin{alignedat}{2}
U_{t,\diamond} &: \ell _{\maclA ,s}
(\Lambda )\times \ell _{\maclA ,s} ^\star(\Lambda )
\to \ell _{\maclA ,s} ^\star(\Lambda ), &
\quad
U_{t,\diamond} &: \ell _{\maclA ,s} ^\star (\Lambda )\times
\ell _{\maclA ,s} (\Lambda )
\to
\ell _{\maclA ,s}  (\Lambda ),
\\[1ex]
U_{t,\diamond} &: \ell _{\maclB ,s} (\Lambda )
\times \ell _{\maclC ,s}^\star (\Lambda )
\to \ell _{\maclC ,s}^\star (\Lambda ), &
\quad
U_{t,\diamond} &: \ell _{\maclB ,s}^\star (\Lambda )\times
\ell _{\maclC ,s} (\Lambda )
\to
\ell _{\maclC ,s} (\Lambda ),
\\[1ex]
U_{t,\diamond} &: \ell _{\maclC ,s} (\Lambda )
\times \ell _{\maclB ,s}^\star (\Lambda )
\to \ell _{\maclB ,s}^\star (\Lambda ) &
\quad \text{and}\quad
U_{t,\diamond} &: \ell _{\maclC ,s}^\star (\Lambda )\times
\ell _{\maclB ,s} (\Lambda )
\to
\ell _{\maclB ,s} (\Lambda ).
\end{alignedat}
\end{equation}

\par

If instead $0<s_1,s_2\le \frac 12$, then
the same holds true with
$\ell _{\maclA ,0,s}$, $\ell _{\maclB ,0,s}$
and $\ell _{\maclC ,0,s}$ in place of
$\ell _{\maclA ,s}$, $\ell _{\maclB ,s}$
and $\ell _{\maclC ,s}$, respectively, at
each occurrence.
\end{prop}

\par

\begin{prop}\label{Prop:RingModPropAnSpacesHelp2}
Let $t\in \mathbf C$, $s\in \overline {\mathbf R}_{\flat}$
be such that $s< \frac 12$, $\Lambda =\nn {2d}$,
and let $U_{t,\wpr}$ be the map from
$\ell _0(\Lambda )\times \ell _0(\Lambda )$
to $\ell _0(\Lambda )$, given by $U_{t,\wpr}(c_1,c_2)=
c_1 \bullet _{(t,\wpr )} c_2$, where 
$c_1\bullet _{(t,\wpr )} c_2$ is as in Definition
\ref{Def:BilinearBinomialOps}.
Then $U_{t,\wpr }$ is uniquely extendable to continuous mappings
\begin{equation}
\begin{alignedat}{2}
U_{t,\wpr} &: \ell _{\maclA ,s} (\Lambda )\times \ell _{\maclA ,s} (\Lambda )
\to \ell _{\maclA ,s} (\Lambda ), & &
\\[1ex]
U_{t,\wpr} &: \ell _{\maclA ,s}^\star (\Lambda )\times
\ell _{\maclB ,s} (\Lambda )
\to
\ell _{\maclA ,s} ^\star (\Lambda ), &
\quad
U_{t,\wpr} &: \ell _{\maclB ,s}^\star (\Lambda )\times
\ell _{\maclA ,s}^\star (\Lambda )
\to \ell _{\maclA ,s}^\star (\Lambda ),
\\[1ex]
U_{t,\wpr} &: \ell _{\maclB ,s} (\Lambda  )\times \ell _{\maclB ,s} (\Lambda  )
\to \ell _{\maclB ,s} (\Lambda  ), &
\quad
U_{t,\wpr} &: \ell _{\maclB ,s}^\star (\Lambda  )\times
\ell _{\maclB ,s}^\star (\Lambda  )
\to
\ell _{\maclB ,s} ^\star (\Lambda  ),
\\[1ex]
U_{t,\wpr} &: \ell _{\maclC ,s} (\Lambda  )\times \ell _{\maclC ,s} (\Lambda  )
\to \ell _{\maclC ,s} (\Lambda  ), &
\quad
U_{t,\wpr} &: \ell _{\maclC ,s}^\star (\Lambda  )\times
\ell _{\maclC ,s}^\star (\Lambda  )
\to
\ell _{\maclC ,s} ^\star (\Lambda  ),
\end{alignedat}
\end{equation}

\par

If instead $0<s\le \frac 12$, then
the same holds true with
$\ell _{\maclA ,0,s}$, $\ell _{\maclB ,0,s}$
and $\ell _{\maclC ,0,s}$ in place of
$\ell _{\maclA ,s}$, $\ell _{\maclB ,s}$
and $\ell _{\maclC ,s}$, respectively, at
each occurrence.
\end{prop}

\par

In each one of the propositions above, the different statements
are proved by similar arguments. We only prove selections of
these statements and leave the rest of the verifications for
the reader.

\par

\begin{proof}[Proof of Proposition \ref{Prop:RingModPropAnSpacesHelpAdj}]
We only prove that $U_t$ is uniquely extendable to a continuous map
from $\ell _{\maclB ,s} (\Lambda )\times \ell _{\maclB ,s} (\Lambda )$
to $\ell _{\maclB ,s} (\Lambda )$ and from
$\ell _{\maclC ,s} (\Lambda )\times \ell _{\maclC ,s} (\Lambda )$
to $\ell _{\maclC ,s} (\Lambda )$. The other cases follow by similar
arguments and are left for the reader.

\par

Let $\vartheta _{r,s}$ be as in Definition \ref{DefSeqSpaces},
$\alpha = (\alpha _2,\alpha _1)\in \Lambda$, and
suppose that $c_j\in \ell _0'(\Lambda )$ satisfy
\begin{equation}\label{Eq:cjEstBullett}
|c_j(\alpha )|
=
|c_j(\alpha _2,\alpha _1)|
\lesssim
\frac {\vartheta _{r_j,s_1}(\alpha _1)}{\vartheta _{r_{0,j},s_2}(\alpha _2)},
\end{equation}
for some $r_j,r_{0,j}>0$, $j=1,2$.
By choosing $r=\max (r_1,r_2)>0$
and $r_0=\min (r_{0,1},r_{0,2})>0$, it is clear that \eqref{Eq:cjEstBullett}
holds with $r$ and $r_0$ in place of
$r_j$ and $r_{0,j}$.
By Lemma \ref{Lemma:ThetaWeightBasic} we get
\begin{equation}\label{Eq:cjEstBullettEst}
\begin{aligned}
|(c_1\bullet _tc_2)(\alpha _2,\alpha _1)|
&\le
\sum _{\gamma \le \alpha} {{\alpha}\choose {\gamma}}^{\frac 12}
|t|^{|\gamma |}|c_1(\alpha -\gamma)c_2(\gamma)|
\\[1ex]
&\lesssim
(1+|t|)^{|\alpha |}
\sum _{\gamma \le \alpha} 2^{\frac 12|\alpha |}
\frac {\vartheta _{r,s_1}(\alpha _1-\gamma _1)\vartheta _{r,s_1}(\gamma _1)}
{\vartheta _{r_0,s_2}(\alpha _2-\gamma _2)\vartheta _{r_0,s_2}(\gamma _2)}
\\[1ex]
&\lesssim
(2+2|t|)^{|\alpha |}
\sum _{\gamma \le \alpha}
\frac {\vartheta _{C_0r,s_1}(\alpha _1)}
{\vartheta _{r_0/C_0,s_2}(\alpha _2)}
\\[1ex]
&\lesssim
(3+2|t|)^{|\alpha |}
\frac {\vartheta _{C_0r,s_1}(\alpha _1)}
{\vartheta _{r_0/C_0,s_2}(\alpha _2)}
\lesssim
\frac {\vartheta _{Cr,s_1}(\alpha _1)}
{\vartheta _{r_0/C,s_2}(\alpha _2)},
\end{aligned}
\end{equation}
for some constants $C>C_0> 1$ which are independent of $r$
and $r_0$. In the last step we have used $s_1,s_2<\frac 12$.

\par

If $c_j\in \ell _{\maclB ,s}(\Lambda)$, $j=1,2$, then for every $r=r_j>0$,
there is an $r_0=r_{0,j}>0$ such that \eqref{Eq:cjEstBullett} holds.
By \eqref{Eq:cjEstBullettEst} it follows that for every
$r>0$, there is an $r_0>0$ such that \eqref{Eq:cjEstBullett}
holds with $c_1\bullet _tc_2$ in place of $c_j$. This implies
that $U_t$ is extendable to a continuous map from
$\ell _{\maclB ,s} (\Lambda )\times \ell _{\maclB ,s} (\Lambda )$
to $\ell _{\maclB ,s} (\Lambda )$. Since $\ell _0(\Lambda )$
is dense in $\ell _{\maclB ,s} (\Lambda )$, it also follows that
the continuous extension of $U_t$ is unique.

\par

If $c_j\in \ell _{\maclC ,s}(\Lambda)$, $j=1,2$, then there is an $r_0=r_{0,j}>0$
such that \eqref{Eq:cjEstBullett} holds for every $r=r_j>0$.
By \eqref{Eq:cjEstBullettEst} it follows that there is an $r_0>0$ such that \eqref{Eq:cjEstBullett} holds
for every $r>0$, with $c_1\bullet _tc_2$ in place of $c_j$. Hence
$U_t$ extends to a continuous map from
$\ell _{\maclC ,s} (\Lambda )\times \ell _{\maclC ,s} (\Lambda )$
to $\ell _{\maclC ,s} (\Lambda )$. Since $\ell _0(\Lambda )$
is dense in $\ell _{\maclC ,s} (\Lambda )$, it also follows that
$U_t$ on $\ell _{\maclC ,s} (\Lambda )$ is uniquely defined,
which gives the assertion.
\end{proof}

\par

\begin{proof}[Proof of Proposition \ref{Prop:RingModPropAnSpacesHelp}]
We only prove that $U=U_{t,\diamond}$ extends uniquely to a continuous map
from $\ell _{\maclC ,s}^\star(\Lambda )\times \ell _{\maclB ,s}(\Lambda )$
to $\ell _{\maclB ,s}(\Lambda )$. The other
assertions follow by similar arguments and are left for the
reader.

\par

Since $\ell _{\maclC ,s}^\star(\Lambda )$ is invariant under the map
$$
c(\alpha _2,\alpha _1)\mapsto t^{|\alpha _1|+|\alpha _2|}c(\alpha _2,\alpha _1),
$$
it suffices to prove the result for $t=1$.

\par

First we prove the continuity extension of $U$. Let
$c_1\in \ell _{\maclC ,s}^\star (\Lambda )$
and
$c_2\in \ell _{\maclB ,s} (\Lambda )$. Then it
follows that for some $r_0>0$ one has
\begin{alignat}{2}
|c_1(\alpha _2,\alpha _1)|
&\lesssim
\frac {\vartheta _{r,s_2}(\alpha _2)}{\vartheta _{r_0,s_1}(\alpha _1)}, &
\quad
(\alpha _2,\alpha _1) &\in \Lambda ,
\label{Eq:Condc1}
\intertext{for every $r>0$. It also follows that for every $r>0$,
there is an $r_0>0$ such that}
|c_2(\alpha _2,\alpha _1)|
&\lesssim
\frac {\vartheta _{r,s_1}(\alpha _1)}{\vartheta _{r_0,s_2}(\alpha _2)}, &
\quad
(\alpha _2,\alpha _1) &\in \Lambda .
\label{Eq:Condc2}
\end{alignat}

\par

Let $r_{0,1}>0$ be fixed such that \eqref{Eq:Condc1} holds
for every $r>0$, with $r_{0,1}$ in place of $r_0$. Also let
$C>1$ be a constant which only dependent on $s_1,s_2$,
let $r_2\in (0,r_{0,1}/C)$ be arbitrary and choose $r_{0,2}>0$
such that \eqref{Eq:Condc2} holds with $r_2$ and $r_{0,2}$
in place of $r$ and $r_0$. Finally let $r_1\in (0,r_{0,2}/C)$
be arbitrary.

\par

If $C$ is chosen large enough, and using the same notations
as in Lemma \ref{Lemma:WickCoeffComp}, then it follows from Lemma
\ref{Lemma:ThetaWeightBasic} that
\begin{align*}
{{\alpha +\gamma}\choose {\gamma}}^{\frac 12}
|c_1(\gamma )c_2(\alpha +\gamma)|
 &\lesssim
2^{|\alpha +\gamma |}\frac
{\vartheta _{r_1,s_2}(\gamma _2)\vartheta _{r_2,s_1}(\alpha _1+\gamma _1)}
{\vartheta _{r_{0,1},s_1}(\gamma _1)
\vartheta _{r_{0,2},s_2}(\alpha _2+\gamma _2)}
\\[1ex]
&\lesssim
2^{-|\alpha +\gamma |}\frac
{\vartheta _{Cr_2,s_1}(\alpha _1)}
{\vartheta _{r_{0,2}/C,s_2}(\alpha _2)}.
\end{align*}
Hence for the series on the right-hand side of
\eqref{Eq:BilinearBinomOp} we have
\begin{align*}
\sum _{\gamma \in \Lambda}
{{\alpha +\gamma }\choose {\gamma }} ^{\frac 12}
|c_1(\gamma _2,\gamma _1)
c_2(\alpha _2+\gamma _2,\alpha _1+\gamma _1)|
&\lesssim
\sum _{\gamma \in \Lambda} 
2^{-|\alpha +\gamma |}\frac
{\vartheta _{Cr_2,s_1}(\alpha _1)}
{\vartheta _{r_{0,2}/C,s_2}(\alpha _2)}
\\[1ex]
&\asymp
\frac {\vartheta _{Cr_2,s_1}(\alpha _1)}
{\vartheta _{r_{0,2}/C,s_2}(\alpha _2)} .
\end{align*}

\par

It follows that the right-hand side of
\eqref{Eq:BilinearBinomOp} is convergent, and by defining
$c_1\bullet _{(1,\diamond )}c_2$ by \eqref{Eq:BilinearBinomOp},
it follows that for every $r>0$, there is an $r_0>0$ such that
\begin{equation}\label{Eq:Condc1c2}
|(c_1\bullet _{(1,\diamond )}c_2)(\alpha )|
\lesssim 
\frac {\vartheta _{r,s_1}(\alpha _1)}
{\vartheta _{r_0,s_2}(\alpha _2)} .
\end{equation}
This implies that $c_1\bullet _{(1,\diamond )}c_2\in
\ell _{\maclB ,s} (\Lambda )$. Furthermore, the involved
parameters of the right-hand side of \eqref{Eq:Condc1c2}
depend continuously on the involved parameters of the right-hand
sides of \eqref{Eq:Condc1} and \eqref{Eq:Condc2}, which
proves the continuity assertion.

\par

Finally, the uniqueness of the extension follows from
the fact that $\ell _0(\Lambda )$ is dense in
$\ell _{\maclB ,s} (\Lambda )$
and
$\ell _{\maclC ,s}^\star (\Lambda )$,
and the result follows.
\end{proof}

\par

\begin{proof}[Proof of Proposition \ref{Prop:RingModPropAnSpacesHelp2}]
We only prove that $U=U_{t,\wpr}$ extends uniquely to continuous products
on $\ell _{\maclB ,s}(\nn {2d} )$ and on $\ell _{\maclC ,s}(\nn {2d} )$.
The other assertions follows by similar arguments and are left for the
reader.

\par

Let
$$
C_0(\alpha ,\alpha _0, \beta ,\beta _0,\gamma )
=
\left (
{{\alpha }\choose {\alpha _0}}{{\beta }\choose {\beta _0}}
{{\alpha -\alpha _0+\gamma}\choose {\gamma}}
{{\beta -\beta _0+\gamma}\choose {\gamma}}
\right )^{\frac 12},
$$
when
 $\alpha ,\alpha _0,\beta ,\beta _0,\gamma \in \nn d$
 satisfy $\alpha _0\le \alpha$ and $\beta _0\le \beta$
(cf. $C_2(\alpha ,\beta ,\gamma )$ in the proof of
Lemma \ref{Lemma:WickCoeffComp}).
We observe
\begin{equation}
|C_0(\alpha ,\alpha _0, \beta ,\beta _0,\gamma )|
\le
2^{|\alpha +\beta +\gamma |}.
\end{equation}
Suppose that $c_j\in \ell _{\maclB ,s}(\nn {2d})$, $j=1,2$. Then it
follows from Lemma \ref{Lemma:ThetaWeightBasic}
and \eqref{Eq:cjEstBullett} for $s_j=s$
that for every $r_j>0$, there is an $r_{0,j}>0$ such that
\begin{align}
|c_1(\alpha _0,\beta -\beta _0+\gamma )|
&\lesssim
\frac
{\vartheta _{r_1,s}(\beta -\beta _0)\vartheta _{r_1,s}(\gamma )}
{\vartheta _{r_{0,1}}(\alpha _0)}
\label{Eq:cjEstWpr1}
\intertext{and}
|c_2(\alpha -\alpha _0+\gamma ,\beta _0)|
&\lesssim
\frac
{\vartheta _{r_2,s}(\beta _0)}
{\vartheta _{r_{0,2},s}(\alpha -\alpha _0)\vartheta _{r_{0,2},s}(\gamma )}.
\label{Eq:cjEstWpr2}
\end{align}

\par

Let $r>0$ be arbitrary, $C>1$ be a constant which is independent of
$r$ which is specified later and let $r_2>0$ be such that $Cr_2<r$. Choose
$r_{0,2}>0$ such that \eqref{Eq:cjEstWpr2} holds. Now choose
$r_1>0$ such that $r_1<r_2$ and $r_1<r_{0,2}/2$ and then choose
$r_{0,1}>0$ such that $r_{0,1}<r_{0,2}$
and \eqref{Eq:cjEstWpr1} hols.

\par

If $C$ is chosen large enough, then it follows from Lemma
\ref{Lemma:ThetaWeightBasic} again that
\begin{multline}\label{Eq:CoeffWprEst}
|c_1(\alpha _0,\beta -\beta _0+\gamma )
c_2(\alpha -\alpha _0+\gamma ,\beta _0)|
\lesssim
\frac
{\vartheta _{r_1,s}(\beta -\beta _0)\vartheta _{r_1,s}(\gamma )
\vartheta _{r_2,s}(\beta _0)}
{\vartheta _{r_{0,1},s}(\alpha _0)\vartheta _{r_{0,2},s}(\alpha -\alpha _0)
\vartheta _{r_{0,2},s}(\gamma )}
\\[1ex]
\lesssim
\frac
{2^{-|\alpha +\beta |}2^{-|\alpha _0+\beta _0|}\vartheta _{Cr_2,s}(\beta )}
{\vartheta _{r_{0,1}/C,s}(\alpha )\vartheta _{r_1,s}(\gamma )}
\lesssim
\frac
{2^{-|\alpha +\beta |}2^{-|\alpha _0+\beta _0|}\vartheta _{r,s}(\beta )}
{\vartheta _{r_0,s}(\alpha )\vartheta _{r_1,s}(\gamma )},
\end{multline}
when $r_0=r_{0,1}/C>0$.
By combining these estimates we obtain
\begin{equation}\label{Eq:CoeffWprEstFinal}
|(c_1 \bullet _{(t,\wpr )} c_2)(\alpha ,\beta )|
\lesssim
\left (
\sum _{\gamma} \frac {(2+2|t|)^{|\gamma |}}{\vartheta _{r_1,s}(\gamma )}
\right )
\left (
\sum _{\alpha _0,\beta _0} 2^{-|\alpha _0+\beta _0|}
\frac {\vartheta _{r,s}(\beta )}{\vartheta _{r_0,s}(\alpha )}
\right )
\asymp
\frac {\vartheta _{r,s}(\beta )}{\vartheta _{r_0,s}(\alpha )},
\end{equation}
and it follows that $U$ on $\ell _0(\nn {2d})$
extends to a continuous products
on $\ell _{\maclB ,s}(\nn {2d} )$.

\par

Suppose instead that
$c_1\in \ell _{\maclC ,s}(\nn {2d} )$, $c_2\in \ell _{\maclC ,s}(\nn {2d} )$.
Then for some $r_{0,1},r_{0,2}>0$ such that $r_{0,1}<r_{0,2}$ one has that
\eqref{Eq:cjEstWpr1} and \eqref{Eq:cjEstWpr2} hold for every $r_1,r_2>0$
such that $r_1<r_2$. Hence, if $C$ and $r_0$ are chosen as in the first part
of the proof, then \eqref{Eq:CoeffWprEst} and \eqref{Eq:CoeffWprEstFinal}
show that for some $r_0>0$ one has that
$$
|(c_1 \bullet _{(t,\wpr )} c_2)(\alpha ,\beta )|
\lesssim
\frac {\vartheta _{r,s}(\beta )}{\vartheta _{r_0,s}(\alpha )},
$$
for every $r>0$. This implies that $U$ on $\ell _0(\nn {2d})$
extends to a continuous products on $\ell _{\maclC ,s}(\nn {2d} )$.
The uniqueness assertions follow from the fact that $\ell _0(\nn {2d})$
is dense in $\ell _{\maclB ,s}(\nn {2d} )$ and in
$\ell _{\maclC ,s}(\nn {2d} )$.
\end{proof}

\par

\section{Continuity, composition and transitions of
Wick and anti-Wick operators}\label{sec4}

\par

In this section we apply the results from the previous sections to deduce
estimates on kernels, Wick and anti-Wick symbols of operators. Especially
we show that in several situations, any linear and continuous operator
on $\maclA _{0,s}(\cc d)$, $\maclA _s(\cc d)$, $\maclA _s'(\cc d)$
or on $\maclA _{0,s}'(\cc d)$ can be expressed as Wick or anti-Wick
operators (see Theorems \ref{Thm:OpReprIdentB} and
\ref{Thm:OpReprIdentC}). In the last part of the section we complete
some analysis in Section 3 in \cite{TeToWa}. Here we deduce some
estimates of Wick symbols of anti-Wick operators with Wick or
anti-Wick symbols belonging to Hilbert spaces, related to
$\wideparen \maclA _{0,1/2}'(\cc {2d})$.

\par

\subsection{Continuity of the operators $\maclT _t$, $\maclT _t^*$}

\par

We start by discussing the operator $\maclT _t$.
A combination of Proposition \ref{Prop:T0thomeomorphismEllC} (1) and
\eqref{Eq:CommDiagram} gives the following result,
related to Proposition \ref{Prop:TransKernelWickASpaces}.
The details are left for the reader.

\par

\begin{thm}\label{Thm:TthomeomorphismEllC}
Let $t \in \mathbf C$, $ s,s_0 \in \overline{\mathbf R_\flat}$ be such that
$ s < \frac 12$ and $s_0 \leq \frac 12$,
$\maclT _{0,t} $ be the map on $ \ell _0' (\nn {2d}) $  given by
\eqref{Eq:T_{0,t}Def},
and let $\maclT _t$ be the map on $\wideparen \maclA _0' (\cc {2d}) $  given by
\eqref{Eq:TtDef}. Then the following is true:
\begin{enumerate}
\item $\maclT _t$ restricts to homeomorphisms on each of the spaces
\begin{equation}\label{Eq:TthomeomorphismEllC}
\begin{alignedat}{4}
&\wideparen \maclB _{0,s_0}(\cc {2d}), &
\quad
&\wideparen \maclB _s(\cc {2d}), &
\quad
&\wideparen \maclB _{0,s_0}^\star (\cc {2d}) &
\quad \text{and}\quad
&\wideparen \maclB _s^\star (\cc {2d})
\text ;
\end{alignedat}
\end{equation}

\vrum

\item the diagram \eqref{Eq:CommDiagram} commutes, after $\ell _s'$ and
$\wideparen \maclA _s'$ are replaced by $\ell _{\maclB ,s}$ and
$\wideparen \maclB _s$, by $\ell _{\maclB ,0,s_0}$ and
$\wideparen \maclB _{0,s_0}$, by $\ell _{\maclB ,s}^\star$ and
$\wideparen \maclB _s^\star$, or by $\ell _{\maclB ,0,s_0}^\star$ and
$\wideparen \maclB _{0,s_0}^\star$, respectively, at each occurrence.
\end{enumerate}
\end{thm}

\par

Next we deduce extensions of the operator $\maclT _t^*$,
and begin with the following, which follows from
(2) in Proposition \ref{Prop:T0tBasichomeomorphism} and the commutative
diagram \eqref{Eq:CommDiagramDual}. The details are left for the reader.

\par

\begin{thm}\label{Thm:BasicTtStarCont}
Let $t \in \mathbf C$, $ s,s_0 \in \overline{\mathbf R_\flat}$ be such that
$ s < \frac 12$ and $s_0 \leq \frac 12$, and let
$\maclT _{0,t}^*$ be the map on $\ell _0(\nn {2d})$ given by
\eqref{Eq:T_{0,t}Dual} and let $\maclT _t^*$ be the map on
$\wideparen \maclA _0(\cc {2d})$ given by \eqref{Eq:Tt*Def}.
Then $\maclT _t^*$ is uniquely extendable
to homeomorphisms on $\wideparen \maclA _s(\cc {2d}) $ and
on $\wideparen \maclA _{0,s_0} (\cc {2d})$. The diagram
\eqref{Eq:CommDiagramDual} commutes after $\ell _0$ and $\wideparen A_0$
have been replaced by $\ell _s$ and $\wideparen A_s$, respectively, or by
$\ell _{0,s_0}$ and $\wideparen A_{0,s_0}$, respectively, at each occurrence.
\end{thm}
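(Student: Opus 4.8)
The plan is to realize the extended operator $T_t^*$ as a conjugate of the sequence operator $T_{0,t}^*$ by the homeomorphism $T_{\maclA}$, and then to read off every assertion from part (2) of Proposition \ref{Prop:T0tBasichomeomorphism} together with the commutativity of \eqref{Eq:CommDiagramDual}. First I would recall that the vertical map $T_{\maclA}$ in \eqref{Eq:CommDiagramDual} (that is, $\Theta_C\circ T_{\maclA}$ in the strict sense of \eqref{T12Map} and \eqref{Eq:ThetaCOp}) is, by Remark \ref{Rem:LinkEllHs} and Definition \ref{Def:tauSpaces}, a homeomorphism from $\ell _s(\nn {2d})$ onto $\wideparen \maclA _s(\cc {2d})$ and from $\ell _{0,s_0}(\nn {2d})$ onto $\wideparen \maclA _{0,s_0}(\cc {2d})$ for all admissible $s,s_0$; in particular it restricts to a homeomorphism $\ell _0(\nn {2d})\to \wideparen \maclA _0(\cc {2d})$ on the base spaces.

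Next, by Proposition \ref{Prop:T0tBasichomeomorphism} (2), the operator $T_{0,t}^*$ on $\ell _0(\nn {2d})$ is uniquely extendable to homeomorphisms on $\ell _s(\nn {2d})$ (because $s<\frac 12$) and on $\ell _{0,s_0}(\nn {2d})$ (because $0<s_0\le \frac 12$; the case $s_0=0$ being trivial by Definition \ref{Def:CompletingSpacesEndcases}), with inverse $T_{0,-t}^*$. I would then define the candidate extension of $T_t^*$ by conjugation,
$$
T_t^* = T_{\maclA}\circ T_{0,t}^*\circ T_{\maclA}^{-1},
$$
on $\wideparen \maclA _s(\cc {2d})$ and on $\wideparen \maclA _{0,s_0}(\cc {2d})$. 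As a composition of three homeomorphisms this is itself a homeomorphism on the respective space, with inverse $T_{\maclA}\circ T_{0,-t}^*\circ T_{\maclA}^{-1}$. By the commutativity of \eqref{Eq:CommDiagramDual} this conjugate coincides with the integral operator \eqref{Eq:Tt*Def} on $\wideparen \maclA _0(\cc {2d})$, so it genuinely extends $T_t^*$; and the extended diagram commutes by construction, since the defining identity rearranges to $T_{\maclA}\circ T_{0,t}^* = T_t^*\circ T_{\maclA}$ on $\ell _s(\nn {2d})$ (respectively $\ell _{0,s_0}(\nn {2d})$).

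It remains to justify \emph{uniqueness} of the extension, and this is where the only (minor) care is needed. I would invoke the density of the base space $\wideparen \maclA _0(\cc {2d})$ in $\wideparen \maclA _s(\cc {2d})$ and in $\wideparen \maclA _{0,s_0}(\cc {2d})$ — the image under $T_{\maclA}$ of the density of $\ell _0(\nn {2d})$ in $\ell _s(\nn {2d})$ and $\ell _{0,s_0}(\nn {2d})$, which is the sequence-side analogue of \eqref{inclHermExpSpaces}. Any two continuous maps agreeing on this dense subspace must then coincide, giving uniqueness and also confirming that the extensions on the nested spaces are mutually compatible (they all restrict to the fixed map on $\wideparen \maclA _0$). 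I do not expect a substantial obstacle: the whole argument is a packaging of Proposition \ref{Prop:T0tBasichomeomorphism} (2) and the commuting square \eqref{Eq:CommDiagramDual}, exactly as indicated in the text.
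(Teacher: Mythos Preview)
Your proposal is correct and follows exactly the approach the paper indicates: the paper states that the result follows from Proposition \ref{Prop:T0tBasichomeomorphism} (2) together with the commuting diagram \eqref{Eq:CommDiagramDual}, leaving the details for the reader, and your conjugation argument $T_t^* = T_{\maclA}\circ T_{0,t}^*\circ T_{\maclA}^{-1}$ together with the density justification for uniqueness is precisely the intended filling-in of those details.
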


\par

In the same way we get the following by combining
Proposition \ref{Prop:T0thomeomorphismEllC} (2) and
\eqref{Eq:CommDiagramDual}. Again the details are left for the reader.

\par

\begin{thm}\label{Thm:TtDualhomeomorphismEllC}
Let $t \in \mathbf C$, $ s,s_0 \in \overline{\mathbf R_\flat}$ be such that
$ s < \frac 12$ and $s_0 \leq \frac 12$,
$\maclT _{0,t}^*$ be the map on $ \ell _0(\nn {2d}) $  given by
\eqref{Eq:T_{0,t}Dual},
and let $\maclT _t^*$ be the map on $\wideparen \maclA _0(\cc {2d}) $  given by
\eqref{Eq:Tt*Def}. Then the following is true:
\begin{enumerate}
\item $\maclT _t^*$ is uniquely extendable to homeomorphisms on
each of the spaces
\begin{equation}\label{Eq:TthomeomorphismEllC2}
\begin{alignedat}{4}
&\wideparen \maclC _{0,s_0}(\cc {2d}), &
\quad
&\wideparen \maclC _s(\cc {2d}), &
\quad
&\wideparen \maclC _{0,s_0}^\star (\cc {2d}) &
\quad \text{and}\quad
&\wideparen \maclC _s^\star (\cc {2d})\text ;
\end{alignedat}
\end{equation}

\vrum

\item the diagram \eqref{Eq:CommDiagramDual} commutes, after $\ell _0$ and
$\wideparen \maclA _0$ are replaced by $\ell _{\maclC ,s}$ and
$\wideparen \maclC _s$, by $\ell _{\maclC ,0,s_0}$ and
$\wideparen \maclC _{0,s_0}$, by $\ell _{\maclC ,s}^\star$ and
$\wideparen \maclC _s^\star$, or by $\ell _{\maclC ,0,s_0}^\star$ and
$\wideparen \maclC _{0,s_0}^\star$, respectively, at each occurrence.
\end{enumerate}
\end{thm}

\par

\subsection{Continuity and relationships between kernel, Wick
and anti-Wick operators}

\par

By
\eqref{Eq:AntiWickAnalPseudoRel}--\eqref{Eq:AntiWickAnalPseudoRelIntForm2},
Proposition \ref{Prop:T0thomeomorphismEllC},
Theorem \ref{Thm:TtDualhomeomorphismEllC},
and the fact that \eqref{Eq:AntiWicktoWickRevised} holds for
$a\in \wideparen \maclA _0(\cc {2d})$, we may now extend
the domain of anti-Wick operators as in the following definition. 

\par

\begin{defn}\label{Def:AntiWickDefExt}
Let $s_1,s_2\in \overline {\mathbf R}_\flat$ be such that $s_1<\frac 12$
and $s_2\le \frac 12$.
\begin{enumerate}
\item If $a\in \wideparen \maclC _{s_1}(\cc {2d})$
($a\in \wideparen \maclC _{s_1}^\star (\cc {2d})$), then the anti-Wick operator
$\op _{\mathfrak V}^{\aw}(a)$ is the linear and continuous operator
on $\maclA _{s_1}(\cc d)$ (on $\maclA _{s_1}'(\cc d)$), given by
\eqref{Eq:AntiWicktoWickRevised}.

\vrum

\item If $a\in \wideparen \maclC _{0,s_2}(\cc {2d})$
($a\in \wideparen \maclC _{0,s_2}^\star (\cc {2d})$), then the anti-Wick operator
$\op _{\mathfrak V}^{\aw}(a)$ is the linear and continuous operator
on $\maclA _{0,s_2}(\cc d)$ (on $\maclA _{0,s_2}'(\cc d)$), given by
\eqref{Eq:AntiWicktoWickRevised}.
\end{enumerate}
\end{defn}

\par

We can now combine the results in previous sections
with the kernel results
in Section \ref{sec2} to find equalities between classes of
Wick, anti-Wick operators and kernel operators. As a first taste we have
the following result which follows by a combination of
\eqref{Eq:KernelWickRelByT}, and
Propositions \ref{Prop:KernelsDifficultDirectionExt},
\ref{Prop:KernelsEasyDirectionExt} and
\ref{Prop:TransKernelWickASpaces}.
The details are left for the reader. The
result is also an immediate consequence
of Theorems 2.7 and 2.8 in \cite{Teofanov2}.

\par

\begin{prop}\label{Prop:KernelOpWickEqual}
Let $j\in \{ 1,2\}$, $s_1,s_2\in \overline{\mathbf R}_\flat$ be such that
$s_1<\frac 12$ and
$s_2\le \frac 12$. Then the mappings
\begin{alignat*}{3}
\wideparen \maclA _{s_1}'(\cc {2d}) &\ni a &
\ &\mapsto \ &\op _{\mathfrak V}(a)
&\in \maclL (\maclA _{s_1}(\cc d),\maclA _{s_1}'(\cc d))
\intertext{and}
\wideparen \maclA _{0,s_2}'(\cc {2d}) &\ni a &
\ &\mapsto \ &\op _{\mathfrak V}(a)
&\in \maclL (\maclA _{0,s_2}(\cc d),\maclA _{0,s_2}'(\cc d))
\end{alignat*}
are isomorphisms.
\end{prop}

\par

The corresponding result for anti-Wick operators is the following.
The result follows by combining \eqref{Eq:AntiWicktoWickRevised}
and Theorem \ref{Thm:BasicTtStarCont}.
The details are left for the reader.

\par

\begin{thm}\label{Thm:antiWickWickEqual}
Let $j\in \{ 1,2\}$, $s_1,s_2\in \overline{\mathbf R}_\flat$ be such that
$s_1<\frac 12$ and
$s_2\le \frac 12$. Then the mappings
\begin{alignat}{3}
\wideparen \maclA _{s_1}(\cc {2d}) &\ni a &
\ &\mapsto \ &\op _{\mathfrak V}^{\aw}(a)
&\in 
\op _{\mathfrak V}(\wideparen \maclA _{s_1}(\cc {2d}))
\label{Eq:antiWickWickEqual}
\intertext{and}
\wideparen \maclA _{0,s_2}(\cc {2d}) &\ni a &
\ &\mapsto \ &\op _{\mathfrak V}^{\aw}(a)
&\in 
\op _{\mathfrak V}(\wideparen \maclA _{0,s_2}(\cc {2d}))
\end{alignat}
are isomorphisms.
\end{thm}

\par

We notice that if $s_1=0$, then $\wideparen \maclA _{s_1}(\cc {2d})
=\wideparen \maclA _0(\cc {2d})$ is the set of all polynomials $a(z,w)$ which
are analytic in $z$ and conjugated analytic in $w$. Hence
\eqref{Eq:antiWickWickEqual} in this case means that the sets of
Wick operators with polynomial symbols agree with the set of anti-Wick
operators with polynomial symbols, which is well-known
(see e.{\,}g. \cite{Berezin71}). On the other
hand, the other cases in Theorem \ref{Thm:antiWickWickEqual},
seems not to be
available in the literature.

\par

\begin{rem}
Note that $\wideparen \maclA _{s_1}(\cc {2d})$ and
$\wideparen \maclA _{s_2}(\cc {2d})$ in Theorem
\ref{Thm:antiWickWickEqual} are images of those
Pilipovi{\'c} spaces, which are not Gelfand-Shilov spaces, under the map
$\Theta \circ \mathfrak V_{2d}$. In particular, 
Theorem \ref{Thm:antiWickWickEqual} gives further motivations
for detailed studies of Pilipovi{\'c} spaces.
\end{rem}

\par

The next two results follows from
\eqref{Eq:KernelWickRelByT}, \eqref{Eq:AntiWicktoWickRevised},
Propositions \ref{Prop:KernelsDifficultDirectionNew} and
\ref{Prop:KernelsEasyDirectionNew}, and
Theorems \ref{Thm:TthomeomorphismEllC} and
\ref{Thm:TtDualhomeomorphismEllC}.
The details are left for the reader.

\par

\begin{thm}\label{Thm:OpReprIdentB}
Let $s_1,s_2 \in \overline{\mathbf R_\flat}$ be such that
$s_1 < \frac 12$ and $s_2 \leq \frac 12$. Then the mappings
\begin{alignat*}{4}
\wideparen \maclB _{s_1}(\cc {2d}) &\ni a
&\ &\mapsto \ &
\op _{\mathfrak V}(a)
&\in
\maclL (\maclA _{s_1}(\cc d)),
\\[1ex]
\wideparen \maclB _{s_1}^\star (\cc {2d}) &\ni a
&\ &\mapsto \ &
\op _{\mathfrak V}(a)
&\in
\maclL (\maclA _{s_1}'(\cc d)),
\\[1ex]
\wideparen \maclB _{0,s_2}(\cc {2d}) &\ni a
&\ &\mapsto \ &
\op _{\mathfrak V}(a)
&\in
\maclL (\maclA _{0,s_2}(\cc d))
\intertext{and}
\wideparen \maclB _{0,s_2}^\star (\cc {2d}) &\ni a
&\ &\mapsto \ &
\op _{\mathfrak V}(a)
&\in
\maclL (\maclA _{0,s_2}'(\cc d)),
%
%
%
\end{alignat*}
are isomorphisms.
\end{thm}

\par

\begin{thm}\label{Thm:OpReprIdentC}
Let $s_1,s_2 \in \overline{\mathbf R_\flat}$ be such that
$s_1 < \frac 12$ and $s_2 \leq \frac 12$. Then the mappings
\begin{alignat*}{3}
\wideparen \maclC _{s_1}(\cc {2d}) &\ni a &
\ &\mapsto \ &\op _{\mathfrak V}^{\aw}(a)
&\in 
\op _{\mathfrak V}(\wideparen \maclC _{s_1}(\cc {2d})),
\\[1ex]
\wideparen \maclC _{s_1}^\star (\cc {2d}) &\ni a &
\ &\mapsto \ &\op _{\mathfrak V}^{\aw}(a)
&\in 
\op _{\mathfrak V}(\wideparen \maclC _{s_1}^\star (\cc {2d})),
\\[1ex]
\wideparen \maclC _{0,s_2}(\cc {2d}) &\ni a &
\ &\mapsto \ &\op _{\mathfrak V}^{\aw}(a)
&\in 
\op _{\mathfrak V}(\wideparen \maclC _{0,s_2}(\cc {2d}))
\intertext{and}
\wideparen \maclC _{0,s_2}^\star (\cc {2d}) &\ni a &
\ &\mapsto \ &\op _{\mathfrak V}^{\aw}(a)
&\in 
\op _{\mathfrak V}(\wideparen \maclC _{0,s_2}^\star (\cc {2d}))
\end{alignat*}
are isomorphisms.
\end{thm}

\par

\begin{rem}\label{Rem:OpReprIdentC}
Let $s\in \overline {\mathbf R}_\flat$ be such that $s_1<\frac 12$
and $s_2\le \frac 12$.
By the preparing results which lead to Definition \ref{Def:AntiWickDefExt},
and Theorem \ref{Thm:OpReprIdentC}, it follows that
\begin{equation}\label{Eq:SetsAntiWickWickIdent}
\begin{aligned}
\sets {\op _{\mathfrak V}^{\aw}(a)}{a\in \wideparen \maclC _{s_1}(\cc {2d})}
&=
\sets {\op _{\mathfrak V}(a)}{a\in \wideparen \maclC _{s_1}(\cc {2d})},
\\[1ex]
\sets {\op _{\mathfrak V}^{\aw}(a)}{a\in \wideparen \maclC _{s_1}^\star (\cc {2d})}
&=
\sets {\op _{\mathfrak V}(a)}{a\in \wideparen \maclC _{s_1}^\star(\cc {2d})},
\\[1ex]
\sets {\op _{\mathfrak V}^{\aw}(a)}{a\in \wideparen \maclC _{0,s_2}(\cc {2d})}
&=
\sets {\op _{\mathfrak V}(a)}{a\in \wideparen \maclC _{0,s_2}(\cc {2d})},
\\[1ex]
\sets
{\op _{\mathfrak V}^{\aw}(a)}{a\in \wideparen \maclC _{0,s_2}^\star (\cc {2d})}
&=
\sets {\op _{\mathfrak V}(a)}{a\in \wideparen \maclC _{0,s_2}^\star(\cc {2d})},
\end{aligned}
\end{equation}
and that
\begin{equation}\label{}
\op _{\mathfrak V}^{\aw}(a_1)\neq \op _{\mathfrak V}^{\aw}(a_2)
\quad \Leftrightarrow \quad
\op _{\mathfrak V}(a_1)\neq \op _{\mathfrak V}(a_2)
\quad \Leftrightarrow \quad
a_1\neq a_2,
\end{equation}
when $a_1$ and $a_2$ belong to any of the spaces in
Theorem \ref{Thm:OpReprIdentC}.
\end{rem}

\par

In Appendix \ref{App:C} we have listed some identities of
spaces of kernels, Wick and anti-Wick operators which are
immediate consequences of Corollary
\ref{Cor:KernelLinOpChar}, Proposition
\ref{Prop:KernelOpWickEqual} and Theorems
\ref{Thm:antiWickWickEqual}--\ref{Thm:OpReprIdentC}.

\par

For other choices of $s_1$ and $s_2$
it seems that the equalities on the left-hand sides
in Theorems \ref{Thm:OpReprIdentB} and \ref{Thm:OpReprIdentC}
between sets of Wick and anti-Wick operators are violated. 
On the other hand, the following result shows that we still may identify
the operator classes on the right-hand sides in \eqref{Thm:OpReprIdentB}
with suitable classes of Wick operators, with
Wick symbols bounded by
\begin{align}
\vartheta _{1,s,r_1,r_2}(z,w)
=
\begin{cases}
e^{\frac 12|z-w|^2-r_1|z|^{\frac 1s}+r_2|w|^{\frac 1s}}, & s<\infty ,
\\[1ex]
e^{\frac 12|z-w|^2}\eabs z^{-r_1}\eabs w^{r_2}, & s=\infty ,
\end{cases}
\label{Eq:CondWickGS1}
\intertext{and}
\vartheta _{2,s,r_1,r_2}(z,w)
=
\begin{cases}
e^{\frac 12|z-w|^2+r_2|z|^{\frac 1s}-r_1|w|^{\frac 1s}}, & s<\infty ,
\\[1ex]
e^{\frac 12|z-w|^2}\eabs z^{r_2}\eabs w^{-r_1}, & s=\infty .
\end{cases}
\label{Eq:CondWickGS2}
\end{align}

\par

\begin{thm}\label{Thm:OpWickGSCase}
Let $s\in [\frac 12 ,\infty ]$ and $\vartheta _{k,s,r_1,r_2}$
be given by \eqref{Eq:CondWickGS1} and \eqref{Eq:CondWickGS2}, $k=1,2$.
Then the following is true:
\begin{enumerate}
\item if $s <\infty$, then $\maclL (\maclA _s(\cc d))$
($\maclL (\maclA _s'(\cc d))$)
consists of all $\op _{\mathfrak V}(a)$ such that $a\in \wideparen A(\cc {2d})$
and for every $r_2>0$, there is an $r_1>0$ such that
\begin{equation}\label{Eq:OpWickGSCase}
|a(z,w)|\lesssim \vartheta _{1,s,r_1,r_2}(z,w)
\qquad
\text ( \, |a(z,w)|\lesssim \vartheta _{2,s,r_1,r_2}(z,w) \, \text )
\end{equation}
holds;

\vrum

\item if $s>\frac 12$, then $\maclL (\maclA _{0,s}(\cc d))$
($\maclL (\maclA _{0,s}'(\cc d))$)
consists of all $\op _{\mathfrak V}(a)$ such that $a\in \wideparen A(\cc {2d})$
and for every $r_1>0$, there is an $r_2>0$ such that
\eqref{Eq:OpWickGSCase} holds.
\end{enumerate}
\end{thm}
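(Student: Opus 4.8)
The plan is to deduce the statement from the kernel characterization of operators on the $\maclA$-spaces together with the growth descriptions of $\wideparen\maclB_s$, $\wideparen\maclB_s^\star$, $\wideparen\maclB_{0,s}$ and $\wideparen\maclB_{0,s}^\star$ from Appendix \ref{App:A}, transported to Wick symbols through the identity $K=e^{(z,w)}a$ in \eqref{Eq:KernelOpToWickOp}. Concretely, by Propositions \ref{Prop:KernelsDifficultDirectionNew} and \ref{Prop:KernelsEasyDirectionNew} with $s_1=s_2=s$, the assignment $K\mapsto T_K$ is a bijection from $\wideparen\maclB_s(\cc{2d})$ onto $\maclL(\maclA_s(\cc d))$, from $\wideparen\maclB_s^\star(\cc{2d})$ onto $\maclL(\maclA_s'(\cc d))$, and, in the Beurling case, from $\wideparen\maclB_{0,s}(\cc{2d})$ and $\wideparen\maclB_{0,s}^\star(\cc{2d})$ onto $\maclL(\maclA_{0,s}(\cc d))$ and $\maclL(\maclA_{0,s}'(\cc d))$. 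Since by \eqref{Eq:KernelOpToWickOp} each such $T_K$ equals $\op_{\mathfrak V}(a)$ with $a(z,w)=e^{-(z,w)}K(z,w)$ (the operator $\op_{\mathfrak V}(a)$ being understood in the extended sense of \cite{Teofanov2}, as these symbols violate \eqref{Eq:AntiWickL1Cond}), it suffices to carry each kernel bound over to the symbol $a$.

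I would first record the growth description of the $\wideparen\maclB$-spaces. For $s<\infty$, Appendix \ref{App:A} gives that $K\in\wideparen A(\cc{2d})$ lies in $\wideparen\maclB_s(\cc{2d})$ precisely when, for every $r_2>0$, there is $r_1>0$ with $|K(z,w)|\lesssim e^{\frac12(|z|^2+|w|^2)-r_1|z|^{1/s}+r_2|w|^{1/s}}$, while for $\wideparen\maclB_s^\star$ the variables $z$ and $w$ and the signs of $r_1,r_2$ are interchanged, and for the Beurling spaces $\wideparen\maclB_{0,s}$, $\wideparen\maclB_{0,s}^\star$ the quantifier is replaced by the dual one, ``for every $r_1>0$ there is $r_2>0$''. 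When $s=\infty$ the stretched exponentials are replaced by the polynomial factors $\eabs z^{-r_1}\eabs w^{r_2}$. These descriptions follow from the sequence-space definitions in Definition \ref{Def:elltauSpaces2}, by applying, separately in the $z$- and $w$-variables of the expansion $K=\sum c_K(\alpha_2,\alpha_1)e_{\alpha_2}(z)e_{\alpha_1}(\overline w)$, the correspondence between coefficient decay $|c(\alpha)|\lesssim e^{-r|\alpha|^{1/(2s)}}$ and function growth $e^{\frac12|z|^2-r'|z|^{1/s}}$, as well as its growing (dual) version in the variable where the coefficients are allowed to increase.

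The last step is purely a substitution. Writing $\repart(z,w)=\tfrac12(|z|^2+|w|^2-|z-w|^2)$, the modulus of $e^{-(z,w)}$ equals $e^{-\frac12(|z|^2+|w|^2)+\frac12|z-w|^2}$, so the Fock factor $e^{\frac12(|z|^2+|w|^2)}$ in each kernel bound is exactly cancelled and replaced by $e^{\frac12|z-w|^2}$. Hence the $\wideparen\maclB_s$-bound on $K$ becomes $|a(z,w)|\lesssim\vartheta_{1,s,r_1,r_2}(z,w)$, the $\wideparen\maclB_s^\star$-bound becomes $|a(z,w)|\lesssim\vartheta_{2,s,r_1,r_2}(z,w)$, and the two Beurling bounds become the same two estimates but with the quantifier ``for every $r_1>0$ there is $r_2>0$''; comparing with \eqref{Eq:CondWickGS1}, \eqref{Eq:CondWickGS2} and \eqref{Eq:OpWickGSCase}, and recalling that $\maclA_s(\cc d)\neq\{0\}$ for $s\ge\frac12$ while $\maclA_{0,s}(\cc d)\neq\{0\}$ only for $s>\frac12$, this is exactly the asserted description in cases (1) and (2).

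I expect the main obstacle to be the two-sided sequence-to-growth estimate underlying the Appendix \ref{App:A} description, and in particular its $w$-variable, where the coefficients are permitted to grow: one must show that a power series with coefficients bounded by $e^{r|\alpha|^{1/(2s)}}$ represents an element of the dual space with the stated growth $e^{\frac12|w|^2+r'|w|^{1/s}}$, which requires the duality between $\maclA_s$ and $\maclA_s'$ (equivalently a careful Cauchy/saddle-point analysis at $|\alpha|\sim|z|^2$ applied to a growing, rather than absolutely convergent, series). Once this correspondence is in hand, the four cases and the $s=\infty$ polynomial variant differ only in the order of the quantifiers and in replacing the stretched exponentials by $\eabs{\cdot}$, and reduce to the bookkeeping described above.
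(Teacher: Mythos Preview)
Your proposal is correct and follows essentially the same route as the paper: invoke the kernel characterizations of Propositions \ref{Prop:KernelsDifficultDirectionNew} and \ref{Prop:KernelsEasyDirectionNew}, read off the explicit growth bounds on $K$ from the description of $\wideparen\maclB_s$, $\wideparen\maclB_s^\star$, $\wideparen\maclB_{0,s}$, $\wideparen\maclB_{0,s}^\star$ in Theorem \ref{Thm:OtherAnalSpacesChar}, and then pass to the Wick symbol via $a(z,w)=e^{-(z,w)}K(z,w)$ using the identity $|a(z,w)|e^{-\frac12|z-w|^2}=|K(z,w)|e^{-\frac12(|z|^2+|w|^2)}$. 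The paper's proof is slightly terser in that it does not separate out the appeal to Appendix \ref{App:A}, but the substance is identical.
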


\par

\begin{proof}
We only prove (1). The assertion (2) follows by similar arguments and is
left for the reader.

\par

Suppose that $T$ is a linear and continuous map from $\maclA _{\flat _1}(\cc d)$
to $\maclA _{\flat _1}'(\cc d)=A(\cc d)$ with kernel $K$ and Wick symbol $a$. Then
$a,K \in \wideparen \maclA _{\flat _1}'(\cc {2d})$. By Propositions
\ref{Prop:KernelsDifficultDirectionNew} and \ref{Prop:KernelsEasyDirectionNew}
it follows that $T\in \maclL (\maclA _s(\cc d))$ ($T\in \maclL (\maclA _s'(\cc d))$),
if and only if
for every $r_2>0$, there is an $r_1>0$ such that
$$
|K(z,w)| \lesssim e^{\frac 12(|z|^2+|w|^2)-r_1|z|^{\frac 1s}+r_2|w|^{\frac 1s}}
\quad
\big ( |K(z,w)| \lesssim e^{\frac 12(|z|^2+|w|^2)+r_2|z|^{\frac 1s}-r_1|w|^{\frac 1s}}
\big ).
$$
The asserted equivalence now follows from the previous equivalence and the fact that
$a(z,w)=K(z,w)e^{-(z,w)}$, which implies that
$$
|a(z,w)e^{-\frac 12|z-w|^2}| = |K(z,w)e^{-\frac 12(|z|^2+|w|^2)}|. \qedhere
$$
\end{proof}

\par

\subsection{Multiplications, differentiations
for power series expansions, and compositions of Wick operators}

\par

First we combine \eqref{Eq:MultCoeffRel}
and Proposition \ref{Prop:RingModPropAnSpacesHelpAdj}
to deduce ring structures of the spaces in \eqref{Eq:SesAnalSp},
\eqref{Eq:SesAnalSp2} and in Definition \ref{Def:tauSpaces2}.
The details are left for the reader.

\par

\begin{thm}\label{Thm:RingPropAnSpaces}
Let $s_1,s_2\in \overline {\mathbf R}_{\flat}$, $s=(s_1,s_2)$ and
$W=\cc {d_2}\times \cc {d_1}$. Then the following is true:
\begin{enumerate}
\item if $s_1,s_2<\frac 12$, then the ring $\wideparen \maclA _0(W)$
under addition and multiplication extends uniquely to topological
rings
\begin{alignat}{7}
& \wideparen \maclA _{s}(W), &
\quad
& \wideparen \maclA _{s}^\star (W), &
\quad
& \wideparen \maclB _{s}(W), &
\quad
& \wideparen \maclB _{s}^\star (W), &
\quad
& \wideparen \maclC _{s}(W), &
\quad & \text{and} & \quad
&\wideparen \maclC _{s}^\star (W),
\label{Eq:DualtauRoumSpaces}
\end{alignat}
under addition and multiplication;

\vrum

\item if $0<s_1,s_2\le \frac 12$, then the ring $\wideparen \maclA _0(W)$
under addition and multiplication extends uniquely to topological
rings
\begin{alignat}{7}
&\wideparen \maclA _{0,s}(W), &
\quad
&\wideparen \maclA _{0,s}^\star (W), &
\quad
&\wideparen \maclB _{0,s}(W), &
\quad
&\wideparen \maclB _{0,s}^\star (W), &
\quad
&\wideparen \maclC _{0,s}(W) &
\quad &\text{and} &\quad
\quad
&\wideparen \maclC _{0,s}^\star (W),
\label{Eq:DualtauBeurSpaces}
\end{alignat}
under addition and multiplication.
\end{enumerate}
\end{thm}

\par

\begin{rem}
In Section \ref{sec6} we present trace results and
results on linear pullbacks for the spaces in
\eqref{Eq:DualtauRoumSpaces}
and \eqref{Eq:DualtauBeurSpaces}.
The ring assertions in Theorem \ref{Thm:RingPropAnSpaces}
also follows by combining such trace and pullback properties. 
\end{rem}

\par

From now on we let the spaces in \eqref{Eq:DualtauRoumSpaces}
and in \eqref{Eq:DualtauBeurSpaces}
be equipped with the ring structure guaranteed by
Theorem \ref{Thm:RingPropAnSpaces}.

\medspace

Next we discuss extensions of the product $\diamond$ 
on $\wideparen \maclA _0(W)$, given in \eqref{Eq:MonomMultAdjoint3}.
The following result is a straight-forward consequence of
\eqref{Eq:MultDiamCoeffRel} and Proposition
\ref{Prop:RingModPropAnSpacesHelp}. The details are left for the reader.

\par

\begin{thm}\label{Thm:RingModPropAnSpaces}
Let $s_1,s_2\in \overline {\mathbf R}_{\flat}$, $s=(s_1,s_2)$,
$W=\cc {d_2}\times \cc {d_1}$
and let $T_\diamond$ be the map from
$\maclA _0(\cc d)\times \maclA _0(\cc d)$
to $\maclA _0(\cc d)$, given by
$T_\diamond (K_1,K_2) = K_1\diamond K_2$,
where $K_1\diamond K_2$ is given by \eqref{Eq:MonomMultAdjoint3}.
If $s_1,s_2<\frac 12$, then
$T_\diamond$ is uniquely extendable to continuous mappings
\begin{alignat*}{4}
T_\diamond &:
\wideparen \maclA _{s}(W)\times \wideparen \maclA _{s}^\star (W)
& &\to \wideparen \maclA _{s}^\star (W) &
\qquad
T_\diamond &:
\wideparen \maclA _{s}^\star (W)\times \wideparen \maclA _{s}(W)
& &\to \wideparen \maclA _{s}(W)
\\[1ex]
T_\diamond &:
\wideparen \maclB _{s}(W)
\times \wideparen \maclC _{s}^\star (W)
& &\to \wideparen \maclC _{s}^\star (W) &
\qquad
T_\diamond &:
\wideparen \maclB _{s}^\star (W)
\times \wideparen \maclC _{s}^\star (W)
& &\to \wideparen \maclC _{s}^\star (W)
\\[1ex]
T_\diamond &:
\wideparen \maclC _{s}(W)
\times \wideparen \maclB _{s}^\star (W)
& &\to \wideparen \maclB _{s}^\star (W) &
\quad \text{and} \quad
T_\diamond &:
\wideparen \maclC _{s}^\star (W)
\times \wideparen \maclB _{s}^\star (W)
& &\to \wideparen \maclB _{s}^\star (W).
\end{alignat*}
If instead $0<s_1,s_2\le \frac 12$, then
the same holds true with
$\wideparen \maclA _{0,s}$, $\wideparen \maclB _{0,s}$
and $\wideparen \maclC _{0,s}$ in place of
$\wideparen \maclA _{s}$, $\wideparen \maclB _{s}$
and $\wideparen \maclC _{s}$, respectively, at each occurrence.
\end{thm}

\par

\begin{rem}
Let $d_2=d$, $d_1=0$ and $s\in \overline {\mathbf R} _\flat$
satisfies $s<\frac 12$.
Then it follows from Theorem
\ref{Thm:RingModPropAnSpaces} that
\begin{equation}
T_\diamond :
\maclA _{s}(\cc d)\times \maclA _{s}'(\cc d)
\to \maclA _{s}'(\cc d)
\quad \text{and} \quad
T_\diamond :
\maclA _{s}'(\cc d)\times \maclA _{s}(\cc d)
\to \wideparen \maclA _{s}(\cc d)
\end{equation}
are continuous. If instead $0<s\le \frac 12$, then the same holds
true with $\maclA _{0,s}$ in place of $\maclA _s$ at each
occurrence.
\end{rem}

\par

\begin{rem}
It is tempting to proclaim that Theorem \ref{Thm:RingModPropAnSpaces}
is the dual result to Theorem \ref{Thm:RingPropAnSpaces}, where
the former should follow from the latter by using the adjoint relation
\eqref{Eq:ComplFourMult}, and that the spaces in
\eqref{Eq:ellCparenSpaces1to4} are close to the duals of
\eqref{Eq:ellBparenSpaces1to4} and vise versa.
A problem here is that the literature seems not to support
completeness and thereby ensure useful duality properties for some
of the involved spaces (see e.{\,}g. \cite{SchWol,Vog,Wen}).
It therefore seems not so straight-forward to apply duality arguments
here.
\end{rem}

\par

Next we discuss extensions of the twisted product $\wpr _{\mathfrak V}$
in \eqref{Eq:DefCompTwistProd0} as a map from
$\wideparen \maclA _0(\cc {2d})\times \wideparen \maclA _0(\cc {2d})$
to $\wideparen A(\cc {2d})$.
The following result is a straight-forward consequence of
\eqref{Eq:MultWprCoeffRel} and Proposition
\ref{Prop:RingModPropAnSpacesHelp2}. The details are left for the reader.

\par

\begin{thm}\label{Thm:WickComp}
Let $s\in \overline {\mathbf R}_{\flat}$
be such that $s< \frac 12$, $W =\cc d\times \cc d$,
and let $T_{\wpr}$ be the map from
$\wideparen \maclA _0(W)\times \wideparen \maclA _0(W)$
to $\wideparen A(W)$, given by $T_{\wpr}(a_1,a_2)=
a_1 \wpr _{\mathfrak V} a_2$.
Then $T_{\wpr}$ is uniquely extendable to continuous mappings
\begin{equation}
\begin{alignedat}{2}
T_{\wpr} &: \wideparen \maclA _s (W)\times \wideparen \maclA _s (W)
\to \wideparen \maclA _s (W), & &
\\[1ex]
T_{\wpr} &: \maclA _s' (W)\times \maclB _s (W)
\to
\maclA _s'(W), &
\quad
T_{\wpr} &: \wideparen \maclB _s^\star (W)\times
\wideparen \maclA _s' (W)
\to \wideparen \maclA _s' (W),
\\[1ex]
T_{\wpr} &: \wideparen \maclB _s(W)\times \wideparen \maclB _s(W)
\to {\maclB _s} (W), &
\quad
T_{\wpr} &: \wideparen \maclB _s^\star (W)\times
\wideparen \maclB _s^\star (W)
\to
\wideparen \maclB _s^\star (W),
\\[1ex]
T_{\wpr} &: \wideparen \maclC _s(W)\times \wideparen \maclC _s(W)
\to \wideparen \maclC _s(W), &
\quad
T_{\wpr} &: \wideparen \maclC _s^\star (W)\times
\wideparen \maclC _s^\star (W)
\to
\wideparen \maclC _s ^\star (W),
\end{alignedat}
\end{equation}

\par

If instead $0<s\le \frac 12$, then
the same holds true with
$\wideparen \maclA _{0,s}$, $\wideparen \maclB _{0,s}$
and $\wideparen \maclC _{0,s}$ in place of
$\wideparen \maclA _s$, $\wideparen \maclB _s$
and $\wideparen \maclC _s$, respectively, at
each occurrence.
\end{thm}

\par

By Theorems \ref{Thm:OpReprIdentC} and \ref{Thm:WickComp}
the twisted anti-Wick product $\wpr _{\mathfrak V}^{\aw}$, defined
by
\begin{equation}\label{Eq:AntiWickSymbComp}
\op _{\mathfrak V}^{\aw}(a_1)\circ \op _{\mathfrak V}^{\aw}(a_2)
=
\op _{\mathfrak V}^{\aw}(a_1\wpr _{\mathfrak V}^{\aw}a_2),
\end{equation}
makes sense, when $a_1$ and $a_2$ belong to the symbol
classes in Theorem \ref{Thm:OpReprIdentC}. The following result now
follows by combining Theorems \ref{Thm:OpReprIdentC}
and \ref{Thm:WickComp}. The details are left for the reader.

\par

\begin{thm}\label{Thm:AntiWickComp}
Let $s\in \overline {\mathbf R}_{\flat}$
be such that $s< \frac 12$, $W =\cc d\times \cc d$,
and let $T_{\wpr}^{\aw}$ be the map from
$\wideparen \maclA _0(W)\times \wideparen \maclA _0(W)$
to $\wideparen A(W)$, given by $T_{\wpr}^{\aw}(a_1,a_2)=
a_1 \wpr _{\mathfrak V}^{\aw} a_2$.
Then $T_{\wpr}^{\aw}$ is uniquely extendable to continuous mappings
\begin{equation}
\begin{alignedat}{2}
T_{\wpr}^{\aw} &: \wideparen \maclA _s (W)\times \wideparen \maclA _s (W)
\to \wideparen \maclA _s (W), & &
\\[1ex]
T_{\wpr}^{\aw} &: \wideparen \maclC _s(W)\times \wideparen \maclC _s(W)
\to \wideparen \maclC _s(W), &
\quad
T_{\wpr}^{\aw} &: \wideparen \maclC _s^\star (W)\times
\wideparen \maclC _s^\star (W)
\to
\wideparen \maclC _s ^\star (W),
\end{alignedat}
\end{equation}

\par

If instead $0<s\le \frac 12$, then
the same holds true with $\wideparen \maclA _{0,s}$
and $\wideparen \maclC _{0,s}$ in place of
$\wideparen \maclA _s$ and $\wideparen \maclC _s$, respectively, at
each occurrence.
\end{thm}

\par

\subsection{Some further relationships between operator kernels,
Wick and anti-Wick symbols}\label{Subsec4.3}

\par

Next we show some transition properties between
Wick, anti-Wick and kernel operators with symbols
and kernels belonging to refined
classes of $\wideparen \maclA _{0,1/2}(\cc {2d})$ and
$\wideparen \maclA _{0,1/2}'(\cc {2d})$.
For any $r>0$, let $\wideparen \maclA _{[r]}^2(\cc {2d})$ be the
Banach space which consists of all
$a\in \wideparen A(\cc {2d})$ such that
$$
\nm a{\wideparen \maclA _{[r]}^2}
\equiv
\left (
\iint _{\cc {2d}}|a(z,w)|^2e^{-r(|z|^2+|w|^2)}\, d\lambda (z)d\lambda (w)
\right )^{\frac 12}
$$
is finite. We observe that $\wideparen \maclA _{0,1/2}(\cc {2d})$ 
($\wideparen \maclA _{0,1/2}'(\cc {2d})$) is the projective (inductive)
limit of $\wideparen \maclA _{[r]}^2(\cc {2d})$ with respect to $r>0$.
First we have the following two propositions.

\par

\begin{prop}\label{Prop:WickAntiWickA2Symbols}
Let $r_1\in (0,1)$ and $r_2>r_1/(1-r_1)$. Then the following is true:
\begin{enumerate}
\item if $a\in \wideparen \maclA _{[r_1]}^2(\cc {2d})$,
then there is a unique $a^{\aw}\in \wideparen \maclA _{[r_2]}^2(\cc {2d})$
such that \eqref{Eq:AntiWickAnalPseudoRel} holds. Furthermore,
\begin{equation}\label{Eq:WickAntiWickA2Symbols1}
\nm {a^{\aw}}{\wideparen \maclA _{[r_2]}^2}
\lesssim
\nm {a}{\wideparen \maclA _{[r_1]}^2} \text ;
\end{equation}

\vrum

\item if $a^{\aw}\in \wideparen \maclA _{[r_1]}^2(\cc {2d})$,
then there is a unique $a\in \wideparen \maclA _{[r_2]}^2(\cc {2d})$
such that \eqref{Eq:AntiWickAnalPseudoRel} holds. Furthermore,
\begin{equation}\label{Eq:WickAntiWickA2Symbols1Op}
\nm {a}{\wideparen \maclA _{[r_2]}^2}
\lesssim
\nm {a^{\aw}}{\wideparen \maclA _{[r_1]}^2} .
\end{equation}
\end{enumerate}
\end{prop}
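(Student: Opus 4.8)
The plan is to work directly with the defining relation \eqref{Eq:AntiWickAnalPseudoRelIntForm}, rewritten on the diagonal/anti-diagonal as in \eqref{Eq:AntiWickAnalPseudoRelAgain}$'$, and to establish the two estimates by a Schur-type $L^2$ argument in the weighted Gaussian spaces $\wideparen \maclA _{[r]}^2(\cc {2d})$. First I would reduce both parts to a single bounded-convolution statement. Since any $a\in \wideparen A(\cc {2d})$ is determined by its values on the diagonal $z=w$, it suffices to understand the map $a(w_1,w_1)\mapsto a^{\aw}(z,z)$ given by
\begin{equation*}
a^{\aw}(z,z) = \pi^{-d}\int_{\cc d} a(w_1,w_1) e^{-|z-w_1|^2}\, d\lambda(w_1),
\end{equation*}
and then to control the full off-diagonal function $a^{\aw}(z,w)$ by its diagonal values using analyticity (the Gaussian factor $e^{-(z-w_1,w-w_1)}$ extends the diagonal kernel analytically in $(z,\overline w)$). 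The key point is that convolution against the Gaussian $e^{-|\cdot|^2}$ gains integrability against a slightly heavier weight: if $a$ is square-integrable against $e^{-r_1|z|^2}$ on the diagonal, then the smoothed function is square-integrable against $e^{-r_2|z|^2}$ whenever $r_2>r_1+1$, the gap of size $1$ coming precisely from the variance of $e^{-|\cdot|^2}$.

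The central computation I would carry out is the weighted norm bound. Writing $f(w_1)=a(w_1,w_1)$ and estimating
\begin{equation*}
|a^{\aw}(z,z)|^2 \lesssim \left(\int_{\cc d} |f(w_1)| e^{-|z-w_1|^2}\, d\lambda(w_1)\right)^2,
\end{equation*}
I would apply Cauchy--Schwarz with the splitting $e^{-|z-w_1|^2}=e^{-\theta|z-w_1|^2}\cdot e^{-(1-\theta)|z-w_1|^2}$ for a suitable $\theta\in(0,1)$, integrate against $e^{-r_2|z|^2}$, and invoke Fubini. The inner Gaussian integral in $z$ then produces a factor of the form $e^{-c|w_1|^2}$ with a constant $c$ that, for $r_2>r_1+1$, dominates the original weight $e^{-r_1|w_1|^2}$; this yields \eqref{Eq:WickAntiWickA2Symbols1}. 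For part (2) the relation \eqref{Eq:AntiWickAnalPseudoRel4}$'$ has exactly the same structure with $a$ and $a^{\aw}$ interchanged and the anti-diagonal in place of the diagonal, so the identical Schur estimate gives \eqref{Eq:WickAntiWickA2Symbols1Op}. Uniqueness of $a^{\aw}$ (resp.\ $a$) follows because an element of $\wideparen A(\cc {2d})$ is determined by its diagonal (resp.\ anti-diagonal) restriction, so the integral formula pins down the whole function once convergence is secured.

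The main obstacle I anticipate is the transition from the diagonal estimate back to a genuine bound on the \emph{two-variable} analytic symbol $a^{\aw}(z,w)$ in the $\wideparen \maclA _{[r_2]}^2$-norm, rather than merely on its diagonal values. One must verify that the weighted $L^2$ control of the diagonal indeed controls the full Gaussian integral defining $a^{\aw}(z,w)$ for $(z,w)$ off the diagonal, and that the resulting function lies in $\wideparen A(\cc {2d})$ (i.e.\ is entire in $(z,\overline w)$). The cleanest route is probably to keep the off-diagonal integral \eqref{Eq:AntiWickAnalPseudoRelIntForm} throughout and estimate it directly, using $|e^{-(z-w_1,w-w_1)}| = e^{-\repart (z-w_1,w-w_1)}$ together with the elementary bound $\repart(z-w_1,w-w_1)\ge -\tfrac12(|z-w_1|^2+|w-w_1|^2)$ coming from $2\repart(\zeta,\eta)\ge -(|\zeta|^2+|\eta|^2)$; this reduces the two-variable estimate to two one-variable Gaussian convolutions of the type above, and the constraint $r_2>r_1+1$ again emerges from matching the variances. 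Checking analyticity of the resulting integral is then routine by differentiation under the integral sign, justified by the absolute convergence already established.
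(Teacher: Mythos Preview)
Your ``cleanest route'' at the end has a sign problem that is fatal. The inequality $\repart(z-w_1,w-w_1)\ge -\tfrac12(|z-w_1|^2+|w-w_1|^2)$ is correct, but it gives
\[
|e^{-(z-w_1,w-w_1)}|=e^{-\repart(z-w_1,w-w_1)}\le e^{\frac12(|z-w_1|^2+|w-w_1|^2)},
\]
which is a \emph{growing} Gaussian in $w_1$, not a decaying one. So this does not ``reduce the two-variable estimate to two one-variable Gaussian convolutions of the type above'': those earlier convolutions used the decaying kernel $e^{-|z-w_1|^2}$, which only appears on the diagonal $z=w$. Off the diagonal the modulus of $e^{-(z-w_1,w-w_1)}$ really can be exponentially large (e.{\,}g.\ for $a\equiv 1$ one computes $a^{\aw}(z,-z)=e^{|z|^2}$), so any absolute-value estimate of the integrand throws away exactly the oscillation that makes the integral converge. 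Your first route (work on the diagonal) does give a clean one-variable convolution bound, but as you yourself note it only controls $a^{\aw}(z,z)$, not the genuine two-variable norm $\nm{a^{\aw}}{\wideparen\maclA_{[r_2]}^2}$, and there is no simple equivalence between the two; the fix you propose for this gap is precisely the off-diagonal bound that fails.

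The paper's argument avoids these problems by never estimating the integral kernel at all. It passes to power series coefficients: by \cite[Corollary 3.8]{Toft18} one has $\nm a{\wideparen\maclA_{[r]}^2}^2\asymp\sum_{\alpha,\beta}|c(\alpha,\beta)|^2 r^{-|\alpha+\beta|}$, so the $\wideparen\maclA_{[r]}^2$-norm is just an $\ell^2_r(\nn{2d})$-norm on Taylor coefficients. The transition $a\leftrightarrow a^{\aw}$ is $a^{\aw}=T_1^*a$, $a=T_{-1}^*a^{\aw}$ (cf.\ \eqref{Eq:AntiWicktoWickRevised}, \eqref{Eq:WicktoAntiWickRevised} and Appendix~\ref{App:B}), which on coefficients is the explicit binomial operator $T_{0,\pm1}^*$ of \eqref{Eq:T_{0,t}Dual}. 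Both estimates then follow from Lemma~\ref{Lemma:ContTOps1}, whose proof is a short binomial-theorem computation; the threshold $r_2>r_1+1$ appears because the binomial sum produces the factor $(1+r_1)^{\frac12|\alpha+\beta|}$. If you want a direct (non-coefficient) proof, you would need either a genuine Schur/oscillatory-integral argument that exploits the phase of $e^{-(z-w_1,w-w_1)}$ rather than bounding its modulus, or an a priori equivalence between the full $\wideparen\maclA_{[r]}^2$-norm and a weighted diagonal norm---neither of which is supplied here.
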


\par

\begin{prop}\label{Prop:WickToKernelsA2}
Let $r_1>0$ and $r_2>r_1+1$. Then the following is true:
\begin{enumerate}
\item if $a\in \wideparen \maclA _{[r_1]}^2(\cc {2d})$,
then there is a unique $K\in \wideparen \maclA _{[r_2]}^2(\cc {2d})$
such that \eqref{Eq:KernelWickRelByT}$'$ holds. Furthermore,
\begin{equation}\label{Eq:WickAntiWickA2Symbols1Ker}
\nm {K}{\wideparen \maclA _{[r_2]}^2}
\lesssim
\nm {a}{\wideparen \maclA _{[r_1]}^2} \text ;
\end{equation}

\vrum

\item  if $K\in \wideparen \maclA _{[r_1]}^2(\cc {2d})$,
then there is a unique $a\in \wideparen \maclA _{[r_2]}^2(\cc {2d})$
such that \eqref{Eq:KernelWickRelByT}$'$ holds. Furthermore,
\begin{equation}\label{Eq:WickAntiWickA2Symbols1KerOp}
\nm {K}{\wideparen \maclA _{[r_2]}^2}
\lesssim
\nm {a}{\wideparen \maclA _{[r_1]}^2} .
\end{equation}
\end{enumerate}
\end{prop}

\par

For the proofs we need the following continuity result for the operators
$\maclT _{0,t}$ and $\maclT _{0,t}^*$.
Here let
$$
\ell _r^p(\nn d) = \ell _{[\vartheta _r]}^p(\nn d),\qquad
\vartheta _r(\alpha )=r^{-\frac 12\cdot |\alpha |}, \ r>0,\ \alpha \in \nn d
$$
(cf. Definition \ref{DefSeqSpaces}).

\par

\begin{lemma}\label{Lemma:ContTOps1}
Let $p\in [1,\infty ]$, $r_1>0$, $t\in \mathbf C$ be such that $|t|\le 1$, and let 
$\maclT _{0,t}$ and $\maclT _{0,t}^*$ be given by \eqref{Eq:T_{0,t}Def}
and \eqref{Eq:T_{0,t}Dual}, $j=1,2$. Then the following is true:
\begin{enumerate}
\item if $r_2\ge 1+r_1$ with strict inequality when $p<\infty$,
then $\maclT _{0,t}$ restricts to a continuous map from
$\ell _{r_1}^p(\nn {2d})$ to $\ell _{r_2}^p(\nn {2d})$;

\vrum

\item if in addition $r_1<1$ and $r_2\ge r_1/(1-r_1)$ with strict inequality when
$p>1$, then $\maclT _{0,t}^*$ restricts to a continuous
map from $\ell _{r_1}^p(\nn {2d})$ to $\ell _{r_2}^p(\nn {2d})$.
\end{enumerate}
\end{lemma}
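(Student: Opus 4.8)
The plan is to remove the weights, reduce to a matrix estimate on $\ell ^p(\nn {2d})$, and apply Schur's test, which handles all $p\in [1,\infty ]$ at once; part~(2) I would then deduce from part~(1) by duality. Write $\vartheta _r(\alpha ,\beta )=r^{-(|\alpha |+|\beta |)/2}$ on $\nn {2d}$, so that $\nm c{\ell _r^p}=\nm {\vartheta _r c}{\ell ^p}$. Then $T_{0,t}$ maps $\ell _{r_1}^p(\nn {2d})$ into $\ell _{r_2}^p(\nn {2d})$ continuously if and only if the conjugated operator $b\mapsto \vartheta _{r_2}\,T_{0,t}(\vartheta _{r_1}^{-1}b)$ is bounded on $\ell ^p(\nn {2d})$, and by \eqref{Eq:T_{0,t}Def} the latter has matrix
\[
M\big ((\alpha ,\beta ),(\alpha ',\beta ')\big )
=
\Big (\tfrac {r_1}{r_2}\Big )^{\frac {|\alpha |+|\beta |}2}
\binom {\alpha }{\gamma }^{\frac 12}\binom {\beta }{\gamma }^{\frac 12}
t^{|\gamma |}r_1^{-|\gamma |},
\qquad \gamma =\alpha -\alpha '=\beta -\beta '\ge 0 ,
\]
and $M=0$ otherwise. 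By Schur's test it then suffices to bound the row sums $A=\sup _{\alpha ,\beta }\sum _{\alpha ',\beta '}|M|$ and the column sums $B=\sup _{\alpha ',\beta '}\sum _{\alpha ,\beta }|M|$, after which the operator norm on $\ell ^p$ is at most $A^{1-1/p}B^{1/p}$.

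The computational heart is to decouple the two binomial factors by Cauchy--Schwarz and to sum each coordinate separately. The row sum runs over the finite range $\gamma \le \alpha ,\beta$, and using $\sum _{\gamma \le \alpha }\binom {\alpha }{\gamma }x^{|\gamma |}=(1+x)^{|\alpha |}$ with $x=r_1^{-1}$ together with $|t|\le 1$ yields
\[
\sum _{\alpha ',\beta '}|M|\le \Big (\tfrac {r_1+1}{r_2}\Big )^{\frac {|\alpha |+|\beta |}2},
\]
so $A\le 1$ exactly because $r_2\ge 1+r_1$. The column sum runs over the infinite range $\gamma \ge 0$; the same decoupling together with the negative binomial identity $\sum _{\gamma \in \nn d}\binom {\alpha +\gamma }{\gamma }x^{|\gamma |}=(1-x)^{-(|\alpha |+d)}$ (valid for $0\le x<1$) gives, with $x=|t|/r_2$,
\[
\sum _{\alpha ,\beta }|M|\le \Big (\tfrac {r_1}{r_2-|t|}\Big )^{\frac {|\alpha '|+|\beta '|}2}\big (1-|t|/r_2\big )^{-d},
\]
whose base is $\le 1$ since $r_2\ge 1+r_1\ge r_1+|t|$, so $B<\infty $. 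This proves~(1): at $p=\infty $ only the row bound is needed, so $r_2\ge 1+r_1$ suffices, whereas for $p<\infty $ the column bound also enters, and requiring the strict inequality $r_2>1+r_1$ keeps its base strictly below $1$.

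For part~(2) I would use duality rather than recompute. Under the $\ell ^2$ pairing one has $(\ell _r^p)'=\ell _{1/r}^{p'}$, and $T_{0,t}^*$ is the adjoint of $T_{0,\overline t}$, with $|\overline t|=|t|\le 1$. Applying part~(1) to $T_{0,\overline t}$ as a map $\ell _{\rho _1}^q\to \ell _{\rho _2}^q$ with $\rho _2\ge 1+\rho _1$ and passing to adjoints shows that $T_{0,t}^*$ maps $\ell _{1/\rho _2}^{q'}$ into $\ell _{1/\rho _1}^{q'}$. Setting $p=q'$, $r_1=1/\rho _2$ and $r_2=1/\rho _1$, the constraint $\rho _2\ge 1+\rho _1$ becomes $r_1<1$ and $r_2\ge r_1/(1-r_1)$, while $q<\infty $ turns into $p>1$; this is precisely the statement of~(2) together with its strictness clause. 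One could instead rerun the Schur computation directly for \eqref{Eq:T_{0,t}Dual}, with the finite binomial sum and the infinite negative binomial sum exchanging roles.

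The main obstacle is the infinite Schur sum---the column sum for $T_{0,t}$, equivalently the row sum for $T_{0,t}^*$---where one must simultaneously secure convergence of the negative binomial series and bound its base uniformly in the free index. Checking that the thresholds $r_2\ge 1+r_1$ and $r_2\ge r_1/(1-r_1)$ are exactly what force these bases to be $\le 1$, and tracking the uniformity in $t$ via $|t|\le 1$, is the delicate point; the Cauchy--Schwarz splitting into single-coordinate sums per variable is what makes the binomial bookkeeping tractable.
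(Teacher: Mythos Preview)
Your proof is correct, and for part~(2) you use duality exactly as the paper does. For part~(1) the core computation is the same as the paper's---Cauchy--Schwarz to decouple the mixed binomial $\binom{\alpha}{\gamma}^{1/2}\binom{\beta}{\gamma}^{1/2}$ into single-variable sums, then the binomial identity $\sum_{\gamma\le\alpha}\binom{\alpha}{\gamma}x^{|\gamma|}=(1+x)^{|\alpha|}$---but the surrounding framework differs. The paper argues more directly: it replaces $|b(\alpha-\gamma,\beta-\gamma)|$ by the pointwise bound $r_1^{(|\alpha|+|\beta|)/2-|\gamma|}\nm b{\ell_{r_1}^p}$ coming from \eqref{Eq:BasicEstA}, then bounds $\nm{T_{0,t}b}{\ell_{r_2}^p}^p$ by a geometric series in $((1+r_1)/r_2)^{p/2}$, which converges precisely when $r_2>1+r_1$. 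You instead conjugate away the weights and run Schur's test on the resulting matrix, computing row and column sums separately. Your approach is a mild refinement: the Schur bounds you obtain are actually finite for all $p$ even at the endpoint $r_2=1+r_1$ (the column-sum base $r_1/(r_2-|t|)$ is at most $1$ there, and equality still gives a finite constant), so your remark that strict inequality is needed to keep the column base below $1$ is overcautious---though harmless, since the lemma only claims the stated hypotheses. The paper's argument genuinely needs the strict inequality for $p<\infty$ because it sums a geometric series in the outer variables; your Schur framework avoids that outer sum and so does not. Both approaches use $|t|\le 1$ only to drop the factor $|t|^{|\gamma|}$.
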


\par

For the proof we observe that
\begin{equation}\label{Eq:BasicEstA}
|b(\alpha ,\beta )|\le r^{\frac 12|\alpha +\beta |}\nm b{\ell _r^p},
\qquad b\in \ell _r^p(\nn {2d}).
\end{equation}

\par

\begin{proof}
By duality it suffices to prove (1).
We only prove the result for $p<\infty$. The other cases follow by similar arguments
and are left for the reader.

\par

Let $r=r_1$ and $b\in \ell _r^p(\nn {2d})$. Then it follows from
\eqref{Eq:BasicEstA}, Cauchy-Schwartz inequality and
straight-forward computations that
\begin{multline*}
\nm {\maclT _{0,t}b}{\ell _{r_2}^p}^p
\le
\sum _{\alpha ,\beta} \left (
\sum _{\gamma \le \alpha ,\beta}
{{\alpha}\choose {\gamma}}^{\frac 12}
{{\beta}\choose {\gamma}}^{\frac 12}|b(\alpha -\gamma ,\beta -\gamma)|
\right )^pr_2^{-\frac p2|\alpha +\beta |}
\\[1ex]
\le
\nm b{\ell _r^p}^p\sum _{\alpha ,\beta} \left (
\sum _{\gamma \le \alpha ,\beta}
{{\alpha}\choose {\gamma}}^{\frac 12}
{{\beta}\choose {\gamma}}^{\frac 12}r^{-|\gamma |}
\right )^pr^{\frac p2 |\alpha +\beta |}r_2^{-\frac p2 |\alpha +\beta |}
\\[1ex]
\le
\nm b{\ell _r^p}^p\sum _{\alpha ,\beta} \left (
\sum _{\gamma \le \alpha}
{{\alpha}\choose {\gamma}}
r^{-|\gamma |}
\right )^{\frac p2}
\left (
\sum _{\gamma \le \beta}
{{\beta}\choose {\gamma}}r^{-|\gamma |}
\right )^{\frac p2}
\left (
\frac r{r_2}
\right )^{\frac p2|\alpha +\beta |}
\\[1ex]
=
\nm b{\ell _r^p}^p\sum _{\alpha ,\beta}
\left (
\frac {1+r}{r_2}
\right )^{\frac p2 |\alpha +\beta |}.
\end{multline*}
Since $1+r<r_2$, it follows that the last series converges, which gives the asserted
continuity.
\end{proof}

\par

\begin{proof}[Proof of Propositions \ref{Prop:WickAntiWickA2Symbols}
and \ref{Prop:WickToKernelsA2}]
If $r>0$ and $a(z,w)\in \wideparen A(\cc {2d})$ has the expansion
\begin{equation}\label{Eq:FormalConjPowerSeriesSymb}
a(z,w)
=
\sum
c(a;\alpha ,\beta)e_{\alpha }(z)e_{\beta}(\overline w),
\quad z,w \in \cc {d}, j=1,2, 
\end{equation}
then \cite[Corollary 3.8]{Toft18} shows that
$$
\nm {a}{\wideparen \maclA _{[r]}^2}
\asymp
\sum _{\alpha ,\beta \in \nn d} |c(a;\alpha ,\beta )|^2r^{-|\alpha |}.
$$
The result now follows from the latter relation, Lemma
\ref{Lemma:ContTOps1}, the fact that
$\ell _0(\nn {2d})$ is dense in $\ell ^2_r(\nn {2d})$, leading to that
$\wideparen \maclA _0(\cc {2d})$ is dense in
$\wideparen \maclA _{[r]}^2(\cc {2d})$, and that 
\eqref{Eq:AntiWickAnalPseudoRel} and \eqref{Eq:KernelWickRelByT}$'$
hold true when $a,a^{\aw}\in \wideparen \maclA _0(\cc {2d})$.
\end{proof}

\par

In \cite[Section 3]{TeToWa} some results concerning the Wick symbols of anti-Wick operators
are presented. For example we have the following result. We omit the proof
since the result is a special case of Theorems 3.1, 3.3 and Theorem 3.7
in \cite{TeToWa}. Here $\mascP _E(\cc d)$ is the set of all
$\omega \in L^\infty _{loc}(\cc d;\mathbf R_+)$ such that
$$
\omega (z+w)\lesssim \omega (z)v(w),
$$
for some $v\in L^\infty _{loc}(\cc d;\mathbf R_+)$.

\par

\begin{thm}\label{Thm:WickSymbolAntiWickOpGS}
Let $s\ge \frac 12$ ($s> \frac 12$), $a,a^{\aw}\in \wideparen A(\cc {2d})$
be such that \eqref{Eq:AntiWickAnalPseudoRelIntForm}$'$
holds, and let $\omega \in \mascP _E(\cc d)$.
Then the following is true:
\begin{enumerate}
\item  if $|a (w,w)| \lesssim e^{-r_0 |w|^{\frac 1s}}$
holds for some $r_0>0$ (for every $r_0>0$), then
\begin{equation}\label{Eq:AWGSEst1}
|a^{\aw} (z,w)|
\lesssim
e^{\frac 14|z-w|^2-r |z+w|^{\frac 1s}}
\end{equation}
for some $r>0$ (for every $r>0$);

\vrum

\item if $|a (w,w)| \lesssim e^{r_0 |w|^{\frac 1s}}$
holds for every $r_0>0$
(for some $r_0>0$), then
\begin{equation}\label{Eq:AWGSEst2}
|a^{\aw} (z,w)|
\lesssim
e^{\frac 14|z-w|^2+r |z+w|^{\frac 1s}}
\end{equation}
for every $r>0$ (for some $r>0$);

\vrum

\item if $|a(w,w)| \lesssim e^{r|w|^2}$ for some $r<1$,
then
\begin{equation}\label{Eq:ConjAnalSet2.5}
|a^{\aw}(z,w)|\lesssim 
e^{r_0 |z+w|^2-\operatorname{Re}(z,w)},
\qquad
r_0=4^{-1}(1-r)^{-1}\text ;
\end{equation}

\vrum

\item if $|a(w,w)|\lesssim  \omega (2w)$, then
\begin{equation}\label{Eq:ConjAnalSet3}
|a^{\aw}(z,w)|\lesssim e^{\frac 14|z-w|^2} \omega (z+w),
\quad z,w\in \cc d.
\end{equation}
\end{enumerate}
\end{thm}

\par

By using \eqref{Eq:AntiWickAnalPseudoRelIntForm2} instead of
\eqref{Eq:AntiWickAnalPseudoRelIntForm}, we get the
following result in the other direction compared to
previous result. The details are left for the reader.

\par

\begin{thm}\label{Thm:AntiWickSymbolWickOpGS}
Let $s\ge \frac 12$ ($s> \frac 12$), $a,a^{\aw}\in \wideparen A(\cc {2d})$
be such that \eqref{Eq:AntiWickAnalPseudoRelIntForm}$'$
holds, and let $\omega \in \mascP _E(\cc d)$.
Then the following is true:
\begin{enumerate}
\item  if $|a ^{\aw}(w,-w)| \lesssim e^{-r_0 |w|^{\frac 1s}}$
holds for some $r_0>0$ (for every $r_0>0$), then
\begin{equation}\label{Eq:AWGSEstOp1}
|a(z,w)|
\lesssim
e^{\frac 14|z+w|^2-r |z-w|^{\frac 1s}}
\end{equation}
for some $r>0$ (for every $r>0$);

\vrum

\item if $|a^{\aw} (w,-w)| \lesssim e^{r_0 |w|^{\frac 1s}}$
holds for every $r_0>0$
(for some $r_0>0$), then
\begin{equation}\label{Eq:AWGSEstOp2}
|a(z,w)|
\lesssim
e^{\frac 14|z+w|^2+r |z-w|^{\frac 1s}}
\end{equation}
for every $r>0$ (for some $r>0$);

\vrum

\item if $|a^{\aw}(w,-w)| \lesssim e^{r|w|^2}$ for some $r<1$,
then
\begin{equation}\label{Eq:ConjAnalSetOp2.5}
|a(z,w)|\lesssim 
e^{r_0 |z-w|^2+\operatorname{Re}(z,w)},
\qquad
r_0=4^{-1}(1-r)^{-1}\text ;
\end{equation}

\vrum

\item if $|a^{\aw}(w,-w)|\lesssim  \omega (2w)$, then
\begin{equation}\label{Eq:ConjAnalSet3Op}
|a(z,w)|\lesssim e^{\frac 14|z+w|^2} \omega (z-w),
\quad z,w\in \cc d.
\end{equation}
\end{enumerate}
\end{thm}

\par

\begin{rem}\label{Rem:DifferenceEstWickAntiWick}
The relationships between the Wick and anti-Wick symbols in
Theorems \ref{Thm:WickSymbolAntiWickOpGS} and
\ref{Thm:AntiWickSymbolWickOpGS} are similar, and one might believe
that transitions of continuity properties for Wick operators to anti-Wick
operators are similar as for transitions in reversed directions. Here we notice
that this is not the case.

\par

For example, suppose that $a^{\aw}$ satisfies \eqref{Eq:AWGSEst2} for
every $r>0$, $0<t_0<\frac 34$ and that $F\in A(\cc d)$ satisfies
\begin{equation}\label{Eq:DifferencesEstWickAntiWick1}
|F(z)|\lesssim e^{t_0|z|^2+r|z|^{\frac 1s}}
\end{equation}
for every $r>0$.
(The case when $a^{\aw}$ satisfies conditions of the form
\eqref{Eq:AWGSEst1} is treated in similar ways and leads to even stronger
continuity properties.) If
$$
t_1= \frac {1-t_0}{3-4t_0}
$$
(which is bounded from below by $t_0$), then the size of the integrand in
$$
|(\op _{\mathfrak V}(a^{\aw})F)(z)|
\le \pi^{-d}
\int _{\cc d}|a^{\aw}(z,w)F(w)|e^{\repart (z,w)-|w|^2}\, d\lambda (w)
$$
can be estimated as
\begin{multline}\label{Eq::DifferenceEstWickAntiWick1}
|a^{\aw}(z,w)F(w)|e^{\repart (z,w)-|w|^2}
\lesssim
e^{\frac 14|z-w|^2+r|z+w|^{\frac 1s}}e^{\repart (z,w)-|w|^2}e^{t_0|w|^2+r|w|^{\frac 1s}}
\\[1ex]
=
e^{t_1|z|^2-\frac {3-4t_0}4|w-\frac z{3-4t_0}|^2+ r(|z+w|^{\frac 1s}+|w|^{\frac 1s}}
\\[1ex]
\lesssim
e^{t_1|z|^2+cr|z|^{\frac 1s}}
e^{-\frac {3-4t_0}4|w-\frac z{3-4t_0}|^2+ cr|w-\frac z{3-4t_0}|^{\frac 1s}},
\end{multline}
for some $c\ge 1$ which is independent of $z,w\in \cc d$ and $r>0$. By
integrating the last estimate with respect to $w$, it follows that
\eqref{Eq:DifferencesEstWickAntiWick1} holds true for every $r>0$ with
$\op _{\mathfrak V}(a^{\aw})F$ and $t_1$ in place of $F$ and $t_0$.

\par

By choosing $t_0=\frac 12$, then $t_1=\frac 12$, and we have proved that
$\op _{\mathfrak V}(a^{\aw})$ is continuous on $\maclA _s'(\cc d)$ for such
$t_0$.

\par

As side remark we observe that stronger continuity properties for anti-Wick operators with
symbols satisfying estimates of the form $a(w,w)e^{\pm r|w|^{\frac 1s}}$
are obtained with more direct
computational methods, without rewriting $\op _{\mathfrak V}^{\aw}(a)$
as a Wick operator. (See Proposition 3.6 to Corollary 3.10 in \cite{TeToWa}.)

\par

The corresponding estimates \eqref{Eq:AWGSEstOp1}
and \eqref{Eq:AWGSEstOp2} when passing from Wick to
anti-Wick operators lead to strongly different conclusions.
In fact, let $F\in A(\cc d)$. For the integrand in
$$
|(\op _{\mathfrak V}^{\aw}(a)F)(z)|
\le \pi^{-d}
\int _{\cc d}|a(w,w)F(w)|e^{\repart (z,w)-|w|^2}\, d\lambda (w)
$$
a similar type of estimate gives
\begin{multline}\label{Eq::DifferenceEstWickAntiWick2}
|a(w,w)F(w)|e^{\repart (z,w)-|w|^2}
\lesssim
e^{\frac 14|w+w|^2\pm r|w-w|^{\frac 1s}}e^{\repart (z,w)-|w|^2}|F(w)|
\\[1ex]
=
e^{\repart (z,w)}|F(w)|.
\end{multline}
Since $F$ should be entire, the right-hand side is integrable with
respect to $w$, only when $F$ is identically zero. Consequently, 
\eqref{Eq:AWGSEstOp1} and \eqref{Eq:AWGSEstOp2} might be useful only
when $F$ is the trivial zero function.

\par

In this context we observe that continuity properties for
$\op _{\mathfrak V}(a^{\aw})$ are reachable with more direct
computational methods, without rewriting it as an anti-Wick
operator. (See \cite{Teofanov2,TeToWa}.)
\end{rem}

\par

\section{Applications to linear operators
on functions and distributions defined on $\rr d$}\label{sec5}

\par

In this section we use the results in previous sections to extend
the definition of Toeplitz operators. In the end we find that if
$s\in \overline{\mathbf R_\flat}$ is suitable, then
any linear and continuous operators on $\maclA _s(\cc d)$,
$\maclA _{0,s}(\cc d)$ and their duals,
can be expressed as Toeplitz operators.

\par

If $a\in \wideparen \maclA _0(\cc {2d})$, then $\tp _{\mathfrak V}(a)$
is the linear and continuous operator on $\maclS _{1/2}(\rr d)$,
given by \eqref{Eq:ToeplitzDef}$''$. For such operators we have the
following extensions.

\par

\begin{thm}\label{Thm:ToeplExt}
Let $s_1,s_2\in \overline{\mathbf R_\flat}$ be such that $s_1<\frac 12$
and $0<s_2\le \frac 12$. Then the following is true:
\begin{enumerate}
\item the map $(a,f)\mapsto \tp _{\mathfrak V}(a)f$ from
$\wideparen \maclA _0(\cc {2d})\times \maclH _0(\rr d)$ to
$\maclH _0'(\rr d)$ is uniquely extendable to a continuous
map $\wideparen \maclC _{s_1}(\cc {2d})\times \maclH _{s_1}(\rr d)$
to $\maclH _{s_1}(\rr d)$, and from $\wideparen \maclC _{s_1}^\star
(\cc {2d})\times \maclH _{s_1}'(\rr d)$
to $\maclH _{s_1}'(\rr d)$;

\vrum

\item  the map $(a,f)\mapsto \tp _{\mathfrak V}(a)f$ from
$\wideparen \maclA _0(\cc {2d})\times \maclH _0(\rr d)$ to
$\maclH _0'(\rr d)$ is uniquely extendable to a continuous
map $\wideparen \maclC _{0,s_2}(\cc {2d})\times \maclH _{0,s_2}(\rr d)$
to $\maclH _{0,s_2}(\rr d)$, and from $\wideparen \maclC _{0,s_2}^\star
(\cc {2d})\times \maclH _{0,s_2}'(\rr d)$
to $\maclH _{0,s_2}'(\rr d)$.
\end{enumerate}
\end{thm}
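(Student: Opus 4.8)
The plan is to transport the whole statement to the Bargmann transform side, where the analogous facts have already been proved, and to exploit the conjugation formula \eqref{Eq:ToeplAntiWick}. First I would rewrite that formula as
$$
\tp _{\mathfrak V}(a) = \mathfrak V_d^{-1}\circ \op _{\mathfrak V}^{\aw}(a) \circ \mathfrak V_d ,
$$
so that, up to the fixed homeomorphisms $\mathfrak V_d$ and $\mathfrak V_d^{-1}$, the operator $\tp _{\mathfrak V}(a)$ acting on the $\maclH$-spaces coincides with $\op _{\mathfrak V}^{\aw}(a)$ acting on the corresponding $\maclA$-spaces. Since the Bargmann transform restricts to homeomorphisms $\maclH _{s_1}(\rr d)\to \maclA _{s_1}(\cc d)$ and $\maclH _{s_1}'(\rr d)\to \maclA _{s_1}'(\cc d)$ (and likewise with $\maclH _{0,s_2},\maclA _{0,s_2}$ in place of $\maclH _{s_1},\maclA _{s_1}$), it suffices to establish the continuity and extension statements for the bilinear map $(a,F)\mapsto \op _{\mathfrak V}^{\aw}(a)F$ on the analytic side and then conjugate back.

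For continuity in $a$ alone this is immediate from Theorem \ref{Thm:OpReprIdentB}: for $s_1<\frac 12$ one has $\op _{\mathfrak V}^{\aw}(\wideparen \maclB _{s_1}(\cc {2d}))=\maclL (\maclA _{s_1}(\cc d))$, so every $\op _{\mathfrak V}^{\aw}(a)$ with $a\in \wideparen \maclB _{s_1}(\cc {2d})$ is continuous on $\maclA _{s_1}(\cc d)$, and $\op _{\mathfrak V}^{\aw}(\wideparen \maclB _{s_1}^\star (\cc {2d}))=\maclL (\maclA _{s_1}'(\cc d))$ handles the dual case; the two statements in (2) are the $0,s_2$ lines of the same theorem. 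Conjugating by $\mathfrak V_d$ then shows that $\tp _{\mathfrak V}(a)$ is continuous on the respective $\maclH$-spaces for each fixed admissible $a$.

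To upgrade this to continuity of the bilinear map I would pass to power-series coefficients. Writing $\op _{\mathfrak V}^{\aw}(a)=T_K$ with $K=T_1T_1^*a$, the assignment $a\mapsto K$ is a homeomorphism of $\wideparen \maclB _{s_1}(\cc {2d})$ onto itself by Theorems \ref{Thm:TthomeomorphismEllC} and \ref{Thm:TtDualhomeomorphismEllC}, so it remains to see that $(K,F)\mapsto T_KF$ is jointly continuous from $\wideparen \maclB _{s_1}(\cc {2d})\times \maclA _{s_1}(\cc d)$ to $\maclA _{s_1}(\cc d)$. At the level of coefficients $T_KF$ has coefficients $\alpha \mapsto \sum _\beta c_K(\alpha ,\beta )c_F(\beta )$, and the estimates used in the proof of Proposition \ref{Prop:KernelsEasyDirectionNew} are bilinear and continuous in $(c_K,c_F)$, which gives joint continuity directly; alternatively one invokes nuclearity of the spaces involved so that separate continuity already suffices. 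I expect this passage from separate to joint continuity to be the only genuine obstacle, the remainder being bookkeeping.

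Finally, uniqueness of the extension follows from density: $\maclH _0(\rr d)$ is dense in $\maclH _{s_1}(\rr d)$ and in $\maclH _{s_1}'(\rr d)$ by the dense inclusions \eqref{inclHermExpSpaces}, while the polynomial symbols $\wideparen \maclA _0(\cc {2d})$ are dense in $\wideparen \maclB _{s_1}(\cc {2d})$ and $\wideparen \maclB _{s_1}^\star (\cc {2d})$, since the finitely supported sequences are dense in the corresponding $\ell _{\maclB}$-spaces. Part (2) is obtained verbatim after replacing $\maclH _{s_1},\wideparen \maclB _{s_1},\maclA _{s_1}$ by $\maclH _{0,s_2},\wideparen \maclB _{0,s_2},\maclA _{0,s_2}$ throughout, the hypothesis $0<s_2\le \frac 12$ ensuring both nontriviality of $\maclH _{0,s_2}(\rr d)$ and the applicability of Theorem \ref{Thm:OpReprIdentB}.
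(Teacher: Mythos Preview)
Your proposal is correct and follows essentially the same route as the paper: conjugate $\tp _{\mathfrak V}(a)$ to $\op _{\mathfrak V}^{\aw}(a)$ via the Bargmann transform, invoke Theorem \ref{Thm:OpReprIdentB} for the continuity on the $\maclA$-side, and deduce uniqueness from the density of $\wideparen \maclA _0(\cc {2d})$ in the $\wideparen \maclB$-spaces together with the density of $\maclH _0(\rr d)$ in the Pilipovi{\'c} spaces. The only difference is that you spell out the passage from separate to joint continuity (via the bilinear coefficient map or nuclearity), which the paper leaves implicit.
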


\par

\begin{proof}
The asserted continuity extensions
follow from \eqref{Eq:ToeplitzDef}$'$, \eqref{Eq:ToeplAntiWick},
Theorem \ref{Thm:OpReprIdentB} and the facts
that the Bargmann transform is homeomorphic from the spaces in
\eqref{clHSpaces} to the spaces in \eqref{clASpaces}. We need to prove
the uniqueness.

\par

By playing with $r_1$ and $r_2$ in Definition \ref{Def:elltauSpaces2},
it follows that $\ell _0(\Lambda )$ is dense in the spaces in
\eqref{Eq:ellBparenSpaces1to4}, which implies that
$\wideparen \maclA _0(W)$ is dense in the spaces in
\eqref{Eq:BparenSpaces}.
The uniqueness in Theorem \ref{Thm:ToeplExt} now follows
by combining these density properties with the fact that $\maclH _0(\rr d)$
is dense in the spaces in \eqref{clHSpaces}.
\end{proof}

\par

\par

\begin{thm}\label{Thm:ToeplExtInj}
Let $s_1,s_2\in \overline{\mathbf R_\flat}$ be such that $s_1<\frac 12$
and $0<s_2\le \frac 12$. If $a_1,a_2\in \wideparen \maclC _{s_1}(\cc {2d})$
and $\tp _{\mathfrak V}(a_1)f=\tp _{\mathfrak V}(a_2)f$ for every
$f\in \maclH _0(\rr d)$, then $a_1=a_2$. The same holds true for
$\wideparen \maclC _{s_1}^\star$, $\wideparen \maclC _{0,s_2}$
and $\wideparen \maclC _{0,s_2}^\star$ in place of
$\wideparen \maclC _{s_1}$.
\end{thm}

\par

\begin{proof}
Suppose that $a_1,a_2\in \wideparen \maclC _{s_1}(\cc {2d})$
satisfy $\tp _{\mathfrak V}(a_1)f=\tp _{\mathfrak V}(a_2)f$ for every
$f\in \maclH _0(\rr d)$.
Then Theorem \ref{Thm:TtDualhomeomorphismEllC} gives
\begin{align*}
\tp _{\mathfrak V}(a_1) = \tp _{\mathfrak V}(a_2) 
\quad
&\Leftrightarrow
\quad
\op _{\mathfrak V}^{\aw}(a_1) = \op _{\mathfrak V}^{\aw}(a_2) 
\quad
\Leftrightarrow
\quad
\op _{\mathfrak V}(\maclT _1^*a_1) = \op _{\mathfrak V}(\maclT _1^*a_2) 
\\[1ex]
&\Leftrightarrow
\quad
\maclT _1^*a_1 = \maclT _1^*a_2
\quad
\Leftrightarrow
\quad
a_1 = a_2. \qedhere
\end{align*}
\end{proof}

\par

\section{Linear pullbacks and trace mappings for spaces
of power series expansions}\label{sec6}

\par

In this section we show that for $0\le s< \frac 12$ (for $0< s\le \frac 12$),
then linear pullbacks and trace mappings are continuous mappings
on the spaces in \eqref{Eq:DualtauRoumSpaces}
(in \eqref{Eq:DualtauBeurSpaces}).

\par

For $B_j\in \GL (d_j,\mathbf C)$, $j=1,2$, we consider linear pullbacks
of the form
\begin{align}
K(z_2,z_1) &\mapsto K(z_2,B_1z_1)
\label{Eq:LinjPullbackMapDel1}
\\[1ex]
K(z_2,z_1) &\mapsto K(B_2z_2,z_1)
\label{Eq:LinjPullbackMapDel2}
\intertext{and}
K(z_2,z_1) &\mapsto K(B_2z_2,B_1z_1).
\label{Eq:LinjPullbackMap}
\end{align}
Here $\GL (d,\mathbf C)$ is the set of all $d\times d$ matrices
with entries in $\mathbf C$. For such pullbacks we have
the following.

\par

\begin{thm}\label{Thm:LinjPullbacks}
Let $s_1,s_2\in \overline{\mathbf R_{\flat ,\infty}}$, $s=(s_2,s_1)$,
$W=\cc {d_2}\times \cc {d_1}$ and $B_j\in \GL (d_j,\mathbf C)$. Then
the following is true:
\begin{enumerate}
\item the map \eqref{Eq:LinjPullbackMap} on $\wideparen A(W)$
extends uniquely to a continuous map on $\wideparen
\maclA _0'(W)$;

\vrum

\item if in addition $s_1<\frac 12$ ($0<s_1\le \frac 12$),
then \eqref{Eq:LinjPullbackMapDel1}
on $\wideparen \maclA _0'(W)$ restricts to continuous
mappings on the spaces in \eqref{Eq:DualtauRoumSpaces}
(in \eqref{Eq:DualtauBeurSpaces});

\vrum

\item if in addition $s_2<\frac 12$ ($0<s_2\le \frac 12$),
then \eqref{Eq:LinjPullbackMapDel2}
on $\wideparen \maclA _0'(W)$ restricts to continuous
mappings on the spaces in \eqref{Eq:DualtauRoumSpaces}
(in \eqref{Eq:DualtauBeurSpaces});

\vrum

\item  if in addition $s_1,s_2< \frac 12$
($0<s_1,s_2\le \frac 12$), then \eqref{Eq:LinjPullbackMap}
on $\wideparen \maclA _0'(W)$ restricts to continuous
mappings on the spaces in \eqref{Eq:DualtauRoumSpaces}
(in \eqref{Eq:DualtauBeurSpaces}).
\end{enumerate}
\end{thm}

\par

\begin{proof}
We only prove (1) and (3). The assertion (2) follows from
similar arguments and then (4) follows by combining
(2) and (3). The details are left for the reader.

\par

Let $B=B_2$, $d=d_2$ and $z=z_2\in \cc d$.
For any integer $N\ge 0$ we let
$$
\Omega _N = \Omega _{N,d} = \sets {\alpha \in \nn d}{|\alpha |=N}.
$$
Then
\begin{equation}\label{Eq:PullbackBasisEl}
e_\alpha (Bz) = \sum _{\beta \in \Omega _N}
C_B(\beta ,\alpha )e_\beta (z),
\quad
\alpha \in \Omega _N,\ z\in \cc d,
\end{equation}
for some constants $C_B(\beta ,\alpha )$ which are uniquely defined
and only depend on $\alpha$, $\beta$ and $B$. If
$K\in \wideparen A(W)$ is given by \eqref{Eq:AnalKernelExp}, then
\begin{align}
K_B(z_2,z_1)\equiv K(Bz_2,z_1)
&=
\sum _{\alpha _j\in \nn {d_j}}
c(K_B;\alpha _2,\alpha _1)e_{\alpha _2}(z_2)e_{\alpha _1}(\overline {z_1}),
\label{Eq:PullbAnalKernelExp}
\intertext{where}
c(K_B;\alpha _2,\alpha _1)
&=
\sum _{\beta \in \Omega _{|\alpha _2|}}C_B(\alpha _2,\beta )
c(K;\beta ,\alpha _1).
\label{Eq:PullbAnalKernelCoeff}
\end{align}

\par

Hence, if, more generally $K\in \wideparen \maclA _0'(W)$, then
the only possibility to define $K(B_2z_2,z_1)$ is given by
\eqref{Eq:PullbAnalKernelExp} and \eqref{Eq:PullbAnalKernelCoeff}.
Since $\Omega _N$ is a finite set for every $N$, it follows that
\eqref{Eq:PullbAnalKernelCoeff} is well-defined and that the map
$$
\{ c(K;\alpha _2,\alpha _1)\}
_{(\alpha _2,\alpha _1)\in \nn {d_2}\times \nn {d_1}}
\mapsto
\{ c(K_B;\alpha _2,\alpha _1)\}
_{(\alpha _2,\alpha _1)\in \nn {d_2}\times \nn {d_1}}
$$
is continuous
\begin{alignat*}{3}
&\ell _{\maclA _0}(\nn {d_2}\times \nn {d_1}), &
\quad
&\ell _{\maclA _0'}(\nn {d_2}\times \nn {d_1}), &
\quad
&\ell _{\maclB _{(0,0)}}(\nn {d_2}\times \nn {d_1}),
\\[1ex]
&\ell _{\maclB _{(0,0)}}^\star (\nn {d_2}\times \nn {d_1}), &
\quad
&\ell _{\maclC _{(0,0)}} (\nn {d_2}\times \nn {d_1}) &
\quad &\text{and on}\quad
\ell _{\maclC _{(0,0)}}^\star (\nn {d_2}\times \nn {d_1}).
\end{alignat*}
Hence
the map \eqref{Eq:LinjPullbackMap} on $\wideparen A(W)$
is uniquely extendable to a continuous map on
$\wideparen \maclA _0'(W)$, which in turn restricts to continuous
mappings on the spaces in \eqref{Eq:DualtauRoumSpaces}
when $s_1=s_2=0$. This gives (1) and (3)
for $s_1=s_2=0$.

\par

We only prove (3) in the case when $s_1,s_2>0$. The case when
$s_1=s_2=0$ is already treated and the case when one of $s_1$
and $s_2$ is zero follows by similar arguments and is left for the
reader.

\par

Let $\{ b(j,k) \} _{k=1}^d$be the entries in the row $j$ of $B$ and let
$$
{N\choose \alpha} = \frac {N!}{\alpha _1!\cdots \alpha _d!},
\quad \alpha =(\alpha _1,\dots ,\alpha _d)\in \Omega _N
$$
Then
\begin{align*}
e_\alpha (Bz)
&=
\frac 1{\alpha !^{\frac 12}}
\prod _{j=1}^d (b(j,1)z_1+\cdots +b(j,d))^{\alpha _j}
\\[1ex]
&=
\frac 1{\alpha !^{\frac 12}}
\prod _{j=1}^d \left (
\sum _{\gamma _j\in \Omega _{\alpha _j}} {{\alpha _j}\choose {\gamma _j}}
\prod _{m=1}^d \left ( b(j,m)^{\gamma _{j,m}}z_m^{\gamma _{j,m}} \right )
\right )
\\[1ex]
&=
\frac 1{\alpha !^{\frac 12}}
\sum _{\gamma _1\in \Omega _{\alpha _1}}
\cdots
\sum _{\gamma _d\in \Omega _{\alpha _d}}
\left ( \prod _{j=1}^d 
{{\alpha _j}\choose {\gamma _j}}
\prod _{m=1}^d \left ( b(j,m)^{\gamma _{j,m}}z_m^{\gamma _{j,m}} \right )
\right )
\end{align*}
Here $\gamma _j=(\gamma _{j,1},\dots ,\gamma _{j,d})$ in the expressions
above. If we let
\begin{equation}\label{Eq:cgammaDef}
c(\gamma ) = \prod _{j,m=1}^d b(j,m)^{\gamma _{j,m}},
\end{equation}
and using \eqref{Eq:basiselements},
then it follows by straight-forward computations that
\begin{align}
e_\alpha (Bz)
&=
\sum _{\gamma _1\in \Omega _{\alpha _1}}
\cdots
\sum _{\gamma _d\in \Omega _{\alpha _d}}
\left (
{{\beta _1}\choose {\delta _1}}\cdots {{\beta _d}\choose {\delta _d}}
\right )^{\frac 12}
\left (
\frac {\alpha !}{\beta !}
\right )^{\frac 12}
c(\gamma )e_\beta (z),
\intertext{where}
\delta _j &= (\gamma _{1,j},\dots ,\gamma _{d,j})
\quad \text{and}\quad
\beta _j=\gamma _{1,j}+\cdots +\gamma _{d,j}.
\label{Eq:NewMultiIndices}
\end{align}
This in turn implies that
$$
K_B(z,z_1) =
\sum 
\left (
{{\beta _1}\choose {\delta _1}}\cdots {{\beta _d}\choose {\delta _d}}
\right )^{\frac 12}
\left (
\frac {\alpha !}{\beta !}
\right )^{\frac 12}
c(K;\alpha ,\varrho )c(\gamma )e_\beta (z)e_\varrho (\overline z_1),
$$
where the sum is taken over all
\begin{equation}\label{Eq:SumOver}
\alpha \in \nn d,\quad \varrho \in \nn {d_1}
\quad \text{and}\quad
\gamma _j\in \Omega _{\alpha _j},\ j=1,\dots ,d.
\end{equation}
Hence
\begin{equation}\label{Eq:cBKExact}
c(K_B;\beta ,\varrho ) =
\sum 
\left (
{{\beta _1}\choose {\delta _1}}\cdots {{\beta _d}\choose {\delta _d}}
\right )^{\frac 12}
\left (
\frac {\alpha !}{\beta !}
\right )^{\frac 12}
c(K;\alpha ,\varrho )c(\gamma ),
\end{equation}
where the sum is taken over all $\alpha$ and $\gamma _j$ in
\eqref{Eq:SumOver} such that \eqref{Eq:NewMultiIndices} holds.
Here we observe that
\begin{equation}\label{Eq:ModalphaModbeta}
|\alpha | = |\beta | = \sum _{j,k=1}^d \gamma _{j,k}.
\end{equation}
This implies that
$$
{{\beta _1}\choose {\delta _1}}\cdots {{\beta _d}\choose {\delta _d}}+
\frac {\alpha !}{\beta !} \le C^{|\alpha |}
\quad \text{and}\quad |c(\gamma )|\le M^{|\alpha |},\
M=\max _{1\le j,m\le d}|b(j,m)|,
$$
for some constant $C\ge 1$. Here the last estimate follows from
\eqref{Eq:cgammaDef}. Since it is clear that the number of elements
in the sum in \eqref{Eq:cBKExact} is bounded by
$$
(1+\alpha _1)^d\cdots (1+\alpha _d)^d\lesssim C^{|\alpha |},
$$
for some constant $C\ge 1$,
it follows by combining these estimates and \eqref{Eq:ModalphaModbeta}
that
$$
\max _{|\alpha |=N}|c(K_B;\alpha ,\varrho )|
\le
C^N \max _{|\alpha |=N}|c(K;\alpha ,\varrho )|
$$
for some $C$ which is independent of $N$ and $\varrho$.

\par

If $\vartheta _{r,(s_2,s_1)}$ and $\omega _{s_2,s_1;r_2,r_1}$
are the same as in Definitions
\ref{Def:SeqSpacesStraightMixed} and \ref{Def:elltauSpaces2},
it follows by the assumptions on $s_2$ that
\begin{align*}
\sup _{\alpha \in \nn d}
|c(K_B;\alpha ,\varrho )\vartheta _{r,(s_2,s_1)}(\alpha ,\varrho )|
&\le
C\sup _{\alpha \in \nn d}
|c(K;\alpha ,\varrho )\vartheta _{c_1r,(s_2,s_1)}(\alpha ,\varrho )|,
\\[1ex]
\sup _{\alpha \in \nn d}
|c(K_B;\alpha ,\varrho )/\vartheta _{r,(s_2,s_1)}(\alpha ,\varrho )|
&\le
C\sup _{\alpha \in \nn d}
|c(K;\alpha ,\varrho )/\vartheta _{c_2r,(s_2,s_1)}(\alpha ,\varrho )|,
\\[1ex]
\sup _{\alpha \in \nn d}
|c(K_B;\alpha ,\varrho )\omega _{r_2,r_1;s_2,s_1}(\alpha ,\varrho )|
&\le
C\sup _{\alpha \in \nn d}
|c(K;\alpha ,\varrho )\omega _{c_1r_2,r_1;s_2,s_1}(\alpha ,\varrho )|,
\intertext{and}
\sup _{\alpha \in \nn d}
|c(K_B;\alpha ,\varrho )/\omega _{r_2,r_1;s_2,s_1}(\alpha ,\varrho )|
&\le
C\sup _{\alpha \in \nn d}
|c(K;\alpha ,\varrho )/\omega _{c_2r_2,r_1;s_2,s_1}(\alpha ,\varrho )|,
\end{align*}
for some constants $c_1,c_2,C>0$ which only depend on $s_2$ and $d$.
The asserted continuity properties in (3) now follows from these estimates.
\end{proof}

\medspace

Next we consider generalized and twisted forms of trace mappings on the
the spaces in \eqref{Eq:DualtauRoumSpaces} and
\eqref{Eq:DualtauBeurSpaces}, given in the following definition.

\par

\begin{defn}\label{Def:TwistTraceMap}
Let $d,d_j,n,n_j\in \mathbf N$,
$\{ e_{j,d}\} _{j=1}^d$ be the canonical basis on $\cc d$
and let $S_d$ be the permutation group on $\{ 1,\dots ,d\}$.
\begin{enumerate}
\item For any $\tau \in S_{d+n}$, $\iota _{\tau ,d}$ is the
linear map from $\cc d$ to $\cc {d+n}$ such that
$$
\iota _{\tau ,d}e_{j,d} = e_{\tau (j),d+n} , \quad j=1,\dots ,d \text .
$$

\vrum

\item The \emph{twisted trace map}
$\operatorname{Tr}_{\tau ,d}$ with respect to $\tau \in S_{d+n}$,
$d$ and $n$ is the continuous (pullback) map from $\maclA _0'(\cc {d+n})$
to $\maclA _0'(\cc d)$ given by
$$
(\operatorname{Tr}_{\tau ,d}F)(z) \equiv F(\iota _{\tau ,d}(z)),
\qquad   F \in \maclA _0'(\cc {d+n}),\ z\in \cc d \text .
$$

\vrum

\item The \emph{twisted trace map}
$\operatorname{Tr}_{\tau,d}$
with respect to $\tau =(\tau _2,\tau _1)$ and $d=(d_2,d_1)$,
$\tau _j\in S_{d_j+n_j}$ and $d_j$, $j=1,2$,
is the continuous (pullback) map from
$\wideparen \maclA _0'(\cc {d_2+n_2}\times \cc {d_1+n_1})$
to $\wideparen \maclA _0'(\cc {d_2}\times \cc {d_1})$ given by
$$
(\operatorname{Tr}_{\tau ,d}K)(z_2,z_1)
\equiv K(\iota _{\tau _2,d_2}(z_2),\iota _{\tau _1,d_1}(z_1))
\qquad   K \in \wideparen \maclA _0'(\cc {d_2+n_2}\times \cc {d_1+n_1}),
\ z_j\in \cc {d_j} \text .
$$
\end{enumerate}
\end{defn}

\par

If $\tau \in S_{d+n}$ and $\iota ^*_{\tau ,d}$
is the map from $\nn d$ to $\nn {n+d}$, given by
\begin{equation}
\iota ^*_{\tau ,d}(\alpha ) = (\beta _1,\dots ,\beta _{d+n}),
\quad \text{where}\quad
\beta _j
=
\begin{cases}
\alpha _{\tau ^{-1}(j)}, & 1\le \tau ^{-1}(j)\le d,
\\[1ex]
0, & d< \tau ^{-1}(j)\le d+n,
\end{cases}
\end{equation}
then it follows that
\begin{equation}
(\operatorname{Tr}_{\tau ,d}F)(z) = \sum _{\alpha \in \nn d}
c(F;\iota ^*_{\tau ,d}(\alpha ))e_\alpha (z),
\qquad
z\in \cc d,
\end{equation}
when
$$
F(z) = \sum _{\alpha \in \nn {d+n}}
c(F;\alpha )e_\alpha (z),
\qquad
z\in \cc {d+n}.
$$

\par

Now let $\vartheta _{r,(s_2,s_1)}$ and $\omega _{s_2,s_1;r_2,r_1}$
be the same as in Definitions \ref{Def:SeqSpacesStraightMixed}
and \ref{Def:elltauSpaces2}. Then it follows that
\begin{multline*}
\sup _{\alpha _j\in \nn {d_j}}
|c(K;\iota ^*_{\tau ,d_2}(\alpha _2),\iota ^*_{\tau ,d_1}(\alpha _1))
\vartheta _{r,(s_2,s_1)}
(\iota ^*_{\tau ,d_2}(\alpha _2),\iota ^*_{\tau ,d_1}(\alpha _1))|
\\[1ex]
\le
\sup _{\alpha _j\in \nn {d_j+n_j}}
|c(K;\alpha _2,\alpha _1)
\vartheta _{r,(s_2,s_1)} (\alpha _2,\alpha _1)|,
\end{multline*}
when $K\in \wideparen \maclA _0'
(\cc {d_2+n_2}\times \cc {d_1+n_1})$, and similarly with
$$
1/\vartheta _{r,(s_2,s_1)},\quad
\omega _{s_2,s_1;r_2,r_1} \quad \text{or}\quad
1/\omega _{s_2,s_1;r_2,r_1}
$$
in place of $\vartheta _{r,(s_2,s_1)}$. The following twisted
trace result is now
a straight-forward consequence of these observations. The details
are left for the reader.

\par

\begin{prop}\label{Prop:TwistTrace}
Let $s_1,s_2\in \overline{\mathbf R_{\flat}}$, $s=(s_1,s_2)$,
$\operatorname{Tr}_{\tau ,d}$ be as in {\rm{(3)}}
in Definition \ref{Def:TwistTraceMap}, $W_1=\cc {d_2}\times \cc {d_1}$
and $W_2=\cc {d_2+n_2}\times \cc {d_1+n_1}$. Then 
$\operatorname{Tr}_{\tau ,d}$ restricts to continuous mappings
from
\begin{alignat*}{7}
& \wideparen \maclA _{s}(W_2), &
\quad
& \wideparen \maclA _{s}^\star (W_2), &
\quad
& \wideparen \maclB _{s}(W_2), &
\quad
& \wideparen \maclB _{s}^\star (W_2), &
\quad
& \wideparen \maclC _{s}(W_2), &
\quad &\text{and} & \quad
&\wideparen \maclC _{s}^\star (W_2)
\intertext{to}
& \wideparen \maclA _{s}(W_1), &
\quad
& \wideparen \maclA _{s}^\star (W_1), &
\quad
& \wideparen \maclB _{s}(W_1), &
\quad
& \wideparen \maclB _{s}^\star (W_1), &
\quad
& \wideparen \maclC _{s}(W_1), &
\quad &\text{and} & \quad
&\wideparen \maclC _{s}^\star (W_1) \text .
\end{alignat*}

\par

If instead $s_1,s_2\in \mathbf R_{\flat ,\infty}$, then the same holds
true with $\wideparen \maclA _{0,s}$, $\wideparen \maclB _{0,s}$
and $\wideparen \maclC _{0,s}$ in place of
$\wideparen \maclA _{s}$, $\wideparen \maclB _{s}$
and $\wideparen \maclC _{s}$, respectivley, at each occurrence.
\end{prop}

\par

We observe that we may combine 
Proposition \ref{Prop:TwistTrace} and Theorem
\ref{Thm:LinjPullbacks} to get an alternative proof of
Theorem \ref{Thm:RingPropAnSpaces}.

\par

\appendix

\par

\section{Identifications of spaces of power series
expansions in terms of spaces of analytic functions}
\label{App:A}

\par

In this appendix we identify the spaces in \eqref{Eq:BparenSpaces}
and \eqref{Eq:CparenSpaces} by convenient subspaces of
$\wideparen A(W)$, for suitable $s\in \overline {\mathbf R_\flat}$. 

\par

We start with the following. We omit the proof since the result is an
immediate consequences of Theorems
4.1, 4.2, 5.2 and 5.3 in \cite{Toft18} and Definition \ref{Def:tauSpaces}.
Here let
\begin{align}
\kappa _{1,r,s}(z)
&=
\begin{cases}
e^{r(\log \eabs z)^{\frac 1{1-2s}}}, & s<\frac 12
\\[1ex]
e^{r|z|^{\frac {2\sigma}{\sigma +1}}}, & s=\flat _\sigma ,\ \sigma >0,
\\[1ex]
e^{\frac {|z|^2}2-r|z|^{\frac 1s}}, & \frac 12 \le s<\infty ,
\\[1ex]
e^{\frac {|z|^2}2}\eabs z^{-r}, & s=\infty ,
\end{cases}
\label{Eq:kappa1Def}
\\[1ex]
\kappa _{2,r,s}(z)
&=
\begin{cases}
e^{r|z|^{\frac {2\sigma}{\sigma -1}}}, & s=\flat _\sigma ,\ \sigma >1,
\\[1ex]
e^{\frac {|z|^2}2+r|z|^{\frac 1s}}, & \frac 12 \le s<\infty ,
\\[1ex]
e^{\frac {|z|^2}2}\eabs z^{r}, & s=\infty ,
\end{cases}
\label{Eq:kappa2Def}
\intertext{and}
\kappa _{j,r,s}^0(z)
&=
\begin{cases}
\kappa _{j,r,s}(z), & s\neq \frac 12,
\\[1ex]
e^{r|z|^2}, & s=\frac 12,
\end{cases}
\qquad j=1,2.
\label{Eq:kappa0Def}
\end{align}

\par

\begin{thm}\label{Thm:AnalSpacesChar}
Let $s_j,t_j\in \overline{\mathbf R_{\flat ,\infty}}$
be such that $t_j>\flat _1$, $j=1,2$, $W=\cc {d_2}\times \cc {d_1}$
and let $\kappa _{j,r,s}$ and $\kappa _{j,r,s}^0$ be given by
\eqref{Eq:kappa1Def}--\eqref{Eq:kappa0Def}, $j=1,2$, when $r>0$.
Then the following is true:
\begin{enumerate}
\item if $s_1,s_2<\infty$, then $\wideparen \maclA _{(s_2,s_1)} (W)$
consists of all $K\in \wideparen A(W)$ such that
$|K(z_2,z_1)|\lesssim \kappa _{1,r,s_2}(z_2)\kappa _{1,r,s_1}(z_1)$
for some $r>0$;

\vrum

\item if $s_1,s_2>0$, then $\wideparen \maclA _{0,(s_2,s_1)} (W)$ consists
of all $K\in \wideparen A(W)$ such that
$|K(z_2,z_1)|\lesssim \kappa _{1,r,s_2}^0(z_2)\kappa _{1,r,s_1}^0(z_1)$
for every $r>0$;

\vrum

\item if $t_1,t_2<\infty$, then $\wideparen \maclA _{(t_2,t_1)}' (W)$ consists
of all $K\in \wideparen A(W)$ such that
$|K(z_2,z_1)|\lesssim \kappa _{2,r,t_2}(z_2)\kappa _{2,r,t_1}(z_1)$
for every $r>0$;

\vrum

\item $\wideparen \maclA _{0,(t_2,t_1)}' (W)$ consists
of all $K\in \wideparen A(W)$ such that
$|K(z_2,z_1)|\lesssim \kappa _{2,r,t_2}^0(z_2)\kappa _{2,r,t_1}^0(z_1)$
for some $r>0$.
\end{enumerate}
\end{thm}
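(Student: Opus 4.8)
The plan is to reduce all four characterisations to the corresponding one-variable statements for $\maclA_s$, $\maclA_{0,s}$ and their duals (Theorems 4.1, 4.2, 5.2 and 5.3 in \cite{Toft18}), by exploiting that every object in sight factors over the two variable groups $z_2\in\cc{d_2}$ and $z_1\in\cc{d_1}$. First I would remove the conjugation $\Theta_C$ of \eqref{Eq:ThetaCOp}: writing $K=\Theta_C F$ with $F=T_{\maclA}c$ gives $|K(z_2,z_1)|=|F(z_2,\overline{z_1})|$, and since each majorant $\kappa_{j,r,s}$ and $\kappa_{j,r,s}^0$ from \eqref{Eq:kappa1Def}--\eqref{Eq:kappa0Def} depends only on the moduli $|z_2|,|z_1|$, the asserted bounds for $K$ coincide with the same bounds for $F$. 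By Definition \ref{Def:tauSpaces} it therefore suffices to characterise the entire function $F=\sum c(\alpha_2,\alpha_1)e_{\alpha_2}(z_2)e_{\alpha_1}(z_1)$ through its growth, where $c$ ranges over the relevant space from \eqref{Eq:ellAparenSpaces1}--\eqref{Eq:ellAparenSpaces4}.

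The core is the equivalence, for such $F$, between the product coefficient bound and the product growth bound, which I would establish in the two standard directions. For coefficient bound $\Rightarrow$ growth bound one majorises the series termwise and uses that both the weight $\vartheta_{r,(s_1,s_2)}$ of Definition \ref{Def:SeqSpacesStraightMixed} and the monomial size $|z^\alpha|/\sqrt{\alpha!}$ factor over the two groups; the majorising double sum then splits into a product of two one-group sums, each of which is exactly the growth estimate underlying the relevant theorem of \cite{Toft18}, yielding $\kappa_{1,r',s_2}(z_2)\,\kappa_{1,r',s_1}(z_1)$ (or the $\kappa_2$ and $\kappa^0$ variants) after a harmless adjustment $r\mapsto r'$. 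Conversely, for growth bound $\Rightarrow$ coefficient bound I would recover $c(\alpha_2,\alpha_1)$ by a double Cauchy integral over polycircles of radii $\rho_2,\rho_1$, dominate the integrand by the product majorant, and optimise $\rho_2$ and $\rho_1$ in the two groups separately; this is again the one-group Cauchy estimate from \cite{Toft18} applied in each variable group.

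It then remains to match the four cases and check the quantifiers. Cases (1)--(2), for the spaces $\wideparen\maclA_{(s_1,s_2)}$ and $\wideparen\maclA_{0,(s_1,s_2)}$ with majorant $\kappa_1$, descend from Theorems 4.1 and 4.2 in \cite{Toft18}; cases (3)--(4), for the duals $\wideparen\maclA_{t}'$ and $\wideparen\maclA_{0,t}'$ (with $t=(t_1,t_2)$) and majorant $\kappa_2$, descend from Theorems 5.2 and 5.3, the hypothesis $t_j>\flat_1$ ensuring that we stay strictly inside the dual regime where $\kappa_2$ imposes a genuine restriction. The inductive/Roumieu spaces (``for some $r$'') and the projective/Beurling spaces (``for every $r$'') line up with the matching quantifier in \eqref{Eq:ellAparenSpaces1}--\eqref{Eq:ellAparenSpaces4}. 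The main obstacle I anticipate is keeping this quantifier bookkeeping honest under tensorisation: a single common parameter $r$ controls both groups in $\vartheta_{r,(s_1,s_2)}$ and in the majorants, so in the Beurling cases one must verify that ``for every $r$'' distributes to both factors, and in the reverse direction that the two Cauchy radii can be optimised simultaneously without the adjustment in one group spoiling the bound in the other. Once the product structure is in place this is routine, which is exactly why the burden rests entirely on the one-variable results of \cite{Toft18}.
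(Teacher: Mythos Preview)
Your proposal is correct and follows essentially the same approach as the paper: the paper simply omits the proof, stating that the result is an immediate consequence of Theorems 4.1, 4.2, 5.2 and 5.3 in \cite{Toft18} together with Definition \ref{Def:tauSpaces}. Your outline is a faithful (and more detailed) unpacking of exactly that reduction, removing $\Theta_C$ via radial invariance of the majorants and then splitting the product-weight estimates into the two one-variable problems handled in \cite{Toft18}.
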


\par

By Remark \ref{Rem:SpaceSpecCase} it follows that Theorem
\ref{Thm:AnalSpacesChar} remains true after the spaces in
\eqref{Eq:SesAnalSp} are replaced by corresponding spaces
in \eqref{clASpaces}.

\par

By similar arguments as in the proofs of Theorems
4.1, 4.2, 5.2 and 5.3 in \cite{Toft18} we get the following two theorems.
The details are left for the reader.

\par

\begin{thm}\label{Thm:OtherAnalSpacesChar}
Let $s_1,s_2\in \overline{\mathbf R_{\flat ,\infty}}$, $W=\cc {d_2}\times \cc {d_1}$,
and let $\kappa _{j,r,s}$
and $\kappa _{j,r,s}^0$ be given by
\eqref{Eq:kappa1Def}--\eqref{Eq:kappa0Def}, $j=1,2$, when $r>0$.
Then the following is true:
\begin{enumerate}
\item if $s_1>\flat _1$ and $s_1,s_2<\infty$, then
$\wideparen \maclB _{(s_2,s_1)}(W)$
consists of all $K\in \wideparen A(W)$ such that
for every $r_1>0$ there is an $r_2>0$ such that 
$|K(z_2,z_1)|\lesssim \kappa _{1,r_2,s_2}(z_2)\kappa _{2,r_1,s_1}(z_1)$;

\vrum

\item if $s_1>\flat _1$ and $s_2>0$, then
$\wideparen \maclB _{0,(s_2,s_1)}(W)$
consists of all $K\in \wideparen A(W)$ such that
for every $r_2>0$ there is an $r_1>0$ such that 
$|K(z_2,z_1)|\lesssim \kappa _{1,r_2,s_2}^0(z_2)\kappa _{2,r_1,s_1}^0(z_1)$;

\vrum

\item if $s_2>\flat _1$ and $s_1,s_2<\infty$, then
$\wideparen \maclB _{(s_2,s_1)}^\star (W)$
consists of all $K\in \wideparen A(W)$ such that
for every $r_2>0$ there is an $r_1>0$ such that 
$|K(z_2,z_1)|\lesssim \kappa _{2,r_2,s_2}^0(z_2)\kappa _{1,r_1,s_1}^0(z_1)$;

\vrum

\item if $s_1>0$ and $s_2>\flat _1$, then
$\wideparen \maclB _{0,(s_2,s_1)}^\star (W)$
consists of all $K\in \wideparen A(W)$ such that
for every $r_1>0$ there is an $r_2>0$ such that 
$|K(z_2,z_1)|\lesssim \kappa _{2,r_2,s_2}^0(z_2)\kappa _{1,r_1,s_1}^0(z_1)$.
\end{enumerate}
\end{thm}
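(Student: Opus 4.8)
The plan is to reduce the statement to the coefficient characterization of the sequence spaces in Definition \ref{Def:elltauSpaces2}, and then feed the resulting one-variable estimates into the correspondences between coefficient growth and pointwise growth of power series established in \cite{Toft18}. First I would record that, by Definition \ref{Def:tauSpaces2}, writing $K=\Theta _C T_{\maclA}c$ with $c=\{c(\alpha _2,\alpha _1)\}$, and choosing $p=\infty$ (permissible since the spaces are independent of $p\in (0,\infty ]$), membership $K\in \wideparen \maclB _{(s_1,s_2)}(W)$ is equivalent, via \eqref{Eq:ellBparenSpaces1}, to the quantified coefficient bound
\begin{equation*}
|c(\alpha _2,\alpha _1)|\lesssim
\frac {\vartheta _{r_1,s_1}(\alpha _1)}{\vartheta _{r_2,s_2}(\alpha _2)}
\quad \text{for every }r_1>0\text{ and some }r_2=r_2(r_1)>0,
\end{equation*}
and likewise for the other three spaces after the swaps of quantifiers and of the roles of $\alpha _1,\alpha _2$ dictated by \eqref{Eq:ellBparenSpaces1to4}.

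The decisive observation is that \emph{both} the coefficient weight $\vartheta _{r_1,s_1}(\alpha _1)/\vartheta _{r_2,s_2}(\alpha _2)$ \emph{and} the target pointwise weight $\kappa _{1,r_2,s_2}(z_2)\kappa _{2,r_1,s_1}(z_1)$ factor as a product of a function of the $z_2$-data and a function of the $z_1$-data. The two variables therefore decouple, and the problem splits into two independent one-variable transitions. In the $z_2$-variable the relevant factor is the decaying $\vartheta _{r_2,s_2}(\alpha _2)^{-1}$, which is exactly the weight governing $\maclA _{s_2}(\cc {d_2})$; by Theorems 4.1 and 4.2 of \cite{Toft18} (equivalently, parts (1)--(2) of Theorem \ref{Thm:AnalSpacesChar}) this matches the bound $\kappa _{1,r_2,s_2}(z_2)$. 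In the $z_1$-variable the relevant factor is the growing $\vartheta _{r_1,s_1}(\alpha _1)$, which governs the dual space $\maclA _{s_1}'(\cc {d_1})$; by Theorems 5.2 and 5.3 of \cite{Toft18} (parts (3)--(4) of Theorem \ref{Thm:AnalSpacesChar}) this matches $\kappa _{2,r_1,s_1}(z_1)$, and it is here that the hypothesis $s_1>\flat _1$ is used, since $\kappa _{2,r,s}$ is defined only for $s>\flat _1$.

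Carrying out the two directions, for sufficiency I would insert the factorized coefficient bound into $|K(z_2,z_1)|\le \sum |c(\alpha _2,\alpha _1)|\,|e_{\alpha _2}(z_2)|\,|e_{\alpha _1}(z_1)|$, split the double sum as a product of two one-variable sums, and dominate each factor by its $\kappa$ using the summation estimates of \cite{Toft18}. For necessity I would recover the coefficients from $K$ by Cauchy estimates over suitably chosen polydiscs (or via the reproducing formula \eqref{reproducing}), so that the factorized pointwise bound produces the coefficient estimate above uniformly in the frozen variable. Parts (2), (3) and (4) then follow by the same scheme, with $\kappa _{j,r,s}$ replaced by $\kappa _{j,r,s}^0$ and with the two variables and the quantifiers interchanged in accordance with \eqref{Eq:ellBparenSpaces1}--\eqref{Eq:ellBparenSpaces4}; the constraint $s_2>\flat _1$ in (3) and (4) appears for the same reason, namely that $\kappa _{2,\cdot ,s_2}$ must be defined.

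The hard part will be the careful tracking of the two quantifiers ``for every $r_1$ there is $r_2$'' through the parameter changes concealed in the one-variable correspondences. These changes are nonlinear in the $\flat _\sigma$ regime and, for real $s_2<\frac 12$ on the test-function side, generate the anomalous $(\log \eabs {z_2})^{1/(1-2s_2)}$ growth of $\kappa _{1,r_2,s_2}$; I would have to verify that as $r_1$ ranges over all positive numbers the associated $z_1$-parameter ranges appropriately, and that the inductive parameter $r_2$ attached to $z_2$ may always be selected after $r_1$ without the two estimates becoming entangled. Because the weights genuinely factor over the two variables, this entanglement is in fact absent, which is precisely why the one-variable results of \cite{Toft18} suffice and no new summation estimate is needed; making this decoupling rigorous is the only substantive point, and the rest is bookkeeping left to the reader, as indicated before the statement.
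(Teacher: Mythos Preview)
Your approach is correct and matches the paper's own, which simply states ``By similar arguments as in the proofs of Theorems 4.1, 4.2, 5.2 and 5.3 in \cite{Toft18} we get the following two theorems. The details are left for the reader.'' You have supplied precisely those details: reduce to coefficient estimates via Definition \ref{Def:tauSpaces2}, observe that both the sequence weight $\vartheta _{r_1,s_1}(\alpha _1)/\vartheta _{r_2,s_2}(\alpha _2)$ and the target weight $\kappa _{1,r_2,s_2}(z_2)\kappa _{2,r_1,s_1}(z_1)$ factor over the two variables, and then apply the quantitative one-variable summation and Cauchy-type estimates from the proofs (not merely the statements) of the cited theorems in \cite{Toft18}; the quantifier tracking you flag as the ``hard part'' is exactly what those proofs furnish, since they give explicit parameter dependence $r\mapsto r'$ in each direction. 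One small slip: the equation you cite for $\wideparen \maclB _{(s_1,s_2)}$ should be \eqref{Eq:ellBparenSpaces2}, not \eqref{Eq:ellBparenSpaces1} (the latter is the Beurling version $\ell _{\maclB ,0,(s_1,s_2)}$), though your displayed quantifier ``for every $r_1$ there is $r_2$'' is the correct one for \eqref{Eq:ellBparenSpaces2}.
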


\par

\begin{thm}\label{Thm:OtherAnalSpacesChar2}
Let $s_1,s_2\in \overline{\mathbf R_{\flat ,\infty}}$, $W=\cc {d_2}\times \cc {d_1}$,
and let $\kappa _{j,r,s}$
and $\kappa _{j,r,s}^0$ be given by
\eqref{Eq:kappa1Def}--\eqref{Eq:kappa0Def}, $j=1,2$, when $r>0$.
Then the following is true:
\begin{enumerate}
\item if $s_1>\flat _1$ and $s_1,s_2<\infty$, then
$\wideparen \maclC _{(s_2,s_1)}(\cc {d_2}\times \cc {d_1})$
consists of all $K\in \wideparen A(\cc {d_2}\times \cc {d_1})$ such that
for some $r_2>0$ it holds
$|K(z_2,z_1)|\lesssim \kappa _{1,r_2,s_2}(z_2)\kappa _{2,r_1,s_1}(z_1)$
for every $r_1>0$;

\vrum

\item if $s_1>\flat _1$ and $s_2>0$, then
$\wideparen \maclC _{0,(s_2,s_1)}(\cc {d_2}\times \cc {d_1})$
consists of all $K\in \wideparen A(\cc {d_2}\times \cc {d_1})$ such that
for some $r_1>0$ it holds
$|K(z_2,z_1)|\lesssim \kappa _{1,r_2,s_2}^0(z_2)\kappa _{2,r_1,s_1}^0(z_1)$
for every $r_2>0$;

\vrum

\item if $s_2>\flat _1$ and $s_1,s_2<\infty$, then
$\wideparen \maclC _{(s_2,s_1)}^\star (\cc {d_2}\times \cc {d_1})$
consists of all $K\in \wideparen A(\cc {d_2}\times \cc {d_1})$ such that
for some $r_1>0$ it holds
$|K(z_2,z_1)|\lesssim \kappa _{2,r_2,s_2}^0(z_2)\kappa _{1,r_1,s_1}^0(z_1)$
for every $r_2>0$;

\vrum

\item if $s_1>0$ and $s_2>\flat _1$, then
$\wideparen \maclC _{0,(s_2,s_1)}^\star (\cc {d_2}\times \cc {d_1})$
consists of all $K\in \wideparen A(\cc {d_2}\times \cc {d_1})$ such that
for some $r_2>0$ it holds
$|K(z_2,z_1)|\lesssim \kappa _{2,r_2,s_2}^0(z_2)\kappa _{1,r_1,s_1}^0(z_1)$
for every $r_1>0$.
\end{enumerate}
\end{thm}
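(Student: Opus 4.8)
The plan is to transport the purely sequential description of the $\wideparen{\maclC}$-spaces from Definition \ref{Def:elltauSpaces2}, completed by Definition \ref{Def:CompletingSpacesEndcases}, through the identification map $\Theta_C\circ T_{\maclA}$ of Definition \ref{Def:tauSpaces2}, and then to convert coefficient estimates into pointwise growth estimates one complex variable at a time, exactly as in the proofs of Theorems 4.1, 4.2, 5.2 and 5.3 in \cite{Toft18}. I would treat the Roumieu case (1) in detail and leave the remaining cases, which differ only in bookkeeping, to the reader. First I would unwind the limits: by the $\bigcup$/$\bigcap$ reformulation of the limits recorded in the remark following the relevant definitions, a sequence $c=\{c(\alpha_2,\alpha_1)\}$ lies in $\ell_{\maclC,(s_1,s_2)}(\nn{d_2}\times\nn{d_1})$ if and only if there is a single $r_2>0$ such that for every $r_1>0$ one has $|c(\alpha_2,\alpha_1)|\lesssim \vartheta_{r_1,s_1}(\alpha_1)/\vartheta_{r_2,s_2}(\alpha_2)$. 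This is the $\exists\,r_2\,\forall\,r_1$ ordering, and it is precisely the swap of the $\forall\,r_1\,\exists\,r_2$ ordering that governs the $\wideparen{\maclB}$-spaces of Theorem \ref{Thm:OtherAnalSpacesChar}; hence $K\in\wideparen{\maclC}_{(s_1,s_2)}(W)$ if and only if the coefficients of $\Theta_C\circ T_{\maclA}$ obey this mixed bound, carrying a growth factor $\vartheta_{r_1,s_1}$ in the $\alpha_1$-slot and a decay factor $1/\vartheta_{r_2,s_2}$ in the $\alpha_2$-slot.

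The second step is the coefficient–growth dictionary. Reading it off from Theorem \ref{Thm:AnalSpacesChar}, a one-variable power series whose coefficients decay like $1/\vartheta_{r,s}$ corresponds to growth governed by $\kappa_{1,r',s}$, while coefficients allowed to grow like $\vartheta_{r,s}$ correspond to growth governed by $\kappa_{2,r',s}$. Applying this in each factor separately — the growth weight $\kappa_{2,r_1,s_1}$ in the $z_1$-variable and the decay weight $\kappa_{1,r_2,s_2}$ in the $z_2$-variable, the conjugation in $z_1$ being harmless since $|e_{\alpha_1}(\overline{z_1})|=|e_{\alpha_1}(z_1)|$ — turns the coefficient estimate above into the asserted bound $|K(z_2,z_1)|\lesssim \kappa_{1,r_2,s_2}(z_2)\kappa_{2,r_1,s_1}(z_1)$, with the single $r_2$ valid for all $r_1$. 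The hypothesis $s_1>\flat_1$ guarantees, through $\wideparen{\maclA}'_{\flat_1}=\wideparen{A}$ of Remark \ref{Rem:SpaceSpecCase}, that the coefficient growth $\vartheta_{r_1,s_1}$ in the $z_1$-slot is still mild enough for these series to converge to genuine elements of $\wideparen{A}(W)$, so that the whole identification takes place inside $\wideparen{A}(W)$. Parts (2)–(4) follow the same template: the Beurling ($0$-type) cases replace each $\kappa_{j,r,s}$ by $\kappa^0_{j,r,s}$ and interchange the two families of radii, while the starred cases interchange the roles of $s_1$ and $s_2$, which is why the analyticity hypothesis migrates to $s_2>\flat_1$; in every case the lone existential parameter is attached to the variable carrying the $\kappa_1$, i.e. decay, factor.

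The step I expect to demand the most care is preserving the \emph{order of quantifiers} across the coefficient–growth conversion. The correspondence imported from \cite{Toft18} is not a termwise identity: passing from coefficients to the function requires summing a majorant series, and passing back requires Cauchy-type estimates, and each direction alters the value of the radius $r$. To keep the $\exists\,r_2\,\forall\,r_1$ structure intact I would track the explicit and monotone dependence of the output radius on the input radius in those estimates, so that the single admissible $r_2$ on the coefficient side yields a single admissible $r_2$ on the growth side, uniformly in the choice of $r_1$. Since the weights $\vartheta_{r,s}$ and $\kappa_{j,r,s}$ depend continuously and monotonically on $r$, this bookkeeping goes through, and once it is settled the remaining work consists of the routine summations already carried out in \cite{Toft18}.
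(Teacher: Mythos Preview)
Your approach is essentially identical to the paper's: the paper gives no proof beyond the sentence ``By similar arguments as in the proofs of Theorems 4.1, 4.2, 5.2 and 5.3 in \cite{Toft18} we get the following two theorems. The details are left for the reader,'' and your proposal carries out precisely that programme, transporting the sequential description through $\Theta_C\circ T_{\maclA}$ and applying the coefficient--growth dictionary of \cite{Toft18} one variable at a time. One small slip in your closing summary: it is not true that ``in every case the lone existential parameter is attached to the variable carrying the $\kappa_1$ factor''---in parts (2) and (4) the existential radius sits with the $\kappa_2$ (growth) factor, as you can read off directly from the theorem statement---but this does not affect your detailed treatment of case (1), which is correct.
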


\par

\section{The link between Wick and anti-Wick symbols of rank one}
\label{App:B}

\par

In this appendix we show that if
$$
b_{z,w}(\alpha ,\beta ) =e_\alpha (z)e_\beta (\overline w),
$$
$t,t_0\in \mathbf C\setminus 0$ are such that $t_0^2=t$ and
\begin{align*}
b_{t_0z,\overline {t_0}w}^t(\alpha ,\beta )
&\equiv
\pi ^{-d} \int _{\cc d}b_{t_0w_1,\overline{t_0}w_1}(\alpha ,\beta )
e^{-(z-w_1,w-w_1)}
\, d\lambda (w_1)
\\[1ex]
&=
\pi ^{-d} \int _{\cc d}e_\alpha (t_0w_1)e_\beta (t_0\overline {w_1})
e^{-(z-w_1,w-w_1)}
\, d\lambda (w_1)
\end{align*}
(cf. \eqref{Eq:AntiWickAnalPseudoRelIntForm}), then
\begin{align}
b_{z,w}^t(\alpha ,\beta ) &= (\maclT _{0,t}b_{z,w})(\alpha ,\beta ) ,
\qquad \alpha ,\beta \in \nn d.
\label{Eq:WickAntiWickBasicTransf2}
\end{align}

\par

In fact, since all integrations can be done coordinate wise, we may assume
that $d=1$.

\par

First we prove \eqref{Eq:WickAntiWickBasicTransf2} for $t=1$ and consider
first the case when $\alpha =\beta =0$,
which is the same as
\begin{equation}\label{Eq:SimplestCase}
\pi ^{-1}\int _{\mathbf C} e^{(z-w_1)(\overline w-\overline w_1)}\, d\lambda (w_1)=1.
\end{equation}
In fact, if $z$ and $w$ are real, then the latter integral is equal to
\begin{equation*}
\pi ^{-1} e^{-zw} \iint _{\rr 2}e^{(z+w)x}e^{-i(z-w)y}e^{-x^2-y^2}\, dxdy
=
e^{-zw} e^{\frac 14(z+w)^2}e^{-\frac 14(z-w)^2} = 1.
\end{equation*}
The searched identity now follows by analytic continuation.

\par

For general $\alpha$ and $\beta$, \eqref{Eq:AntiWickAnalPseudoRelIntForm}
and integrations by parts give
\begin{multline*}
\pi (\alpha !\beta !)^{\frac 12}b_{z,w}^1(\alpha ,\beta )
=
\int _{\mathbf C} w_1^\alpha \overline w_1^\beta
e^{-(z-w_1)(\overline w -\overline w_1)}\, d\lambda (w_1)
\\[1ex]
=
(-1)^\beta e^{-z\overline w} 
\int _{\mathbf C} w_1^\alpha 
e^{z\overline w}e^{\overline ww_1}
\big ( \partial _{w_1}^\beta (e^{-|w_1|^2}) \big )\, d\lambda (w_1)
\\[1ex]
=
e^{z\overline w}
\sum _{\gamma \le \beta} {{\beta} \choose {\gamma}}
\int _{\mathbf C} \big ( \partial ^\gamma (w_1^\alpha )\big ) 
e^{z\overline w}
\big ( \partial _{\overline w_1}^{\beta -\gamma} ( e^{\overline ww_1} )\big )
e^{-|w_1|^2} \, d\lambda (w_1)
\\[1ex]
=
e^{z\overline w}
\sum _{\gamma \le \alpha ,\beta}
{{\alpha} \choose {\gamma}} {{\beta} \choose {\gamma}} \gamma !
\overline w^{\beta -\gamma}
\int _{\mathbf C} w_1^{\alpha -\gamma} 
e^{z\overline w} e^{\overline ww_1}
e^{-|w_1|^2} \, d\lambda (w_1)
\\[1ex]
=
e^{z\overline w}
\sum _{\gamma \le \alpha ,\beta}
{{\alpha} \choose {\gamma}} {{\beta} \choose {\gamma}} \gamma !
\overline w^{\beta -\gamma}
(-1)^{\alpha -\gamma}\int _{\mathbf C}
e^{z\overline w} e^{\overline ww_1}
\big ( \partial _{\overline w_1}^{\alpha -\gamma}(e^{-|w_1|^2})\big )
\, d\lambda (w_1)
\\[1ex]
=
\sum _{\gamma \le \alpha ,\beta}
{{\alpha} \choose {\gamma}} {{\beta} \choose {\gamma}} \gamma !
z^{\alpha -\gamma}\overline w^{\beta -\gamma}
e^{z\overline w} \int _{\mathbf C}
e^{z\overline w} e^{\overline ww_1}
e^{-|w_1|^2} \, d\lambda (w_1)
\\[1ex]
=
\pi (\alpha !\beta !)^{\frac 12}
\sum _{\gamma \le \alpha ,\beta}
{{\alpha} \choose {\gamma}}^{\frac 12} {{\beta} \choose {\gamma}}^{\frac 12}
e_{\alpha -\gamma} (z)e_{\beta -\gamma}(\overline w),
\end{multline*}
and \eqref{Eq:WickAntiWickBasicTransf2} follows for general $\alpha$ and
$\beta$. Here the last equality follows from \eqref{Eq:SimplestCase}
and the identity
$$
\alpha !^{-\frac 12}\gamma !^{\frac 12}z^{\alpha -\gamma}
= 
{{\alpha} \choose {\gamma}}^{-\frac 12} e_{\alpha -\gamma} (z),
$$
and \eqref{Eq:WickAntiWickBasicTransf2} follows in the case when
$t=1$.

\par

Next suppose that $t\in \mathbf C\setminus 0$ and recall that $t_0^2=t$.
By the definitions it follows that
$$
(\maclT _{0,t}b_{z,w})(\alpha ,\beta ) = t_0^{|\alpha +\beta |} (\maclT _{0,1}
b_{z/t_0,w/\overline {t_0}})(\alpha ,\beta ),
$$
and the case $t=1$ shows that
\begin{align*}
(\maclT _{0,t}b_{t_0z,\overline {t_0}w})(\alpha ,\beta )
&=
\pi ^{-d} t_0^{|\alpha +\beta |} \int _{\cc d} b_{w_1,w_1}(\alpha ,\beta )
e^{-(z-w_1,w-w_1)}\, d\lambda (w_1)
\\[1ex]
&=\pi ^{-d} \int _{\cc d} b_{t_0w_1,\overline{t_0}w_1}(\alpha ,\beta )
e^{-(z-w_1,w-w_1)}\, d\lambda (w_1),
\end{align*}
which gives the assertion.

\par

\section{Identities of spaces of kernel operators, Wick
operators and anti-Wick operators}
\label{App:C}

\par

Let $s_1,s_2 \in \overline{\mathbf R_\flat}$ be such that
$s_1 < \frac 12$ and $s_2 \leq \frac 12$. By Corollary
\ref{Cor:KernelLinOpChar}, Proposition
\ref{Prop:KernelOpWickEqual} and Theorems
\ref{Thm:antiWickWickEqual}--\ref{Thm:OpReprIdentC},
it follows that
\begin{alignat}{2}
\op _{\mathfrak V}(\wideparen \maclA _{s_1}'(\cc {2d})) &=&
\sets {T_K}{K\in \wideparen \maclA _{s_1}'(\cc {2d})}
&=
\maclL (\maclA _{s_1}(\cc d),\maclA _{s_1}'(\cc d)),
\\[1ex]
\op _{\mathfrak V}(\wideparen \maclA _{0,s_2}'(\cc {2d})) &= \ &
\sets {T_K}{K\in \wideparen \maclA _{0,s_2}'(\cc {2d})}
&=
\maclL (\maclA _{0,s_2}(\cc d),\maclA _{0,s_2}'(\cc d)),
\end{alignat}

\vspace{-0.33cm}

\begin{equation}
\op _{\mathfrak V}^{\aw}(\wideparen \maclA _{s_1}(\cc {2d}))
=
\op _{\mathfrak V}(\wideparen \maclA _{s_1}(\cc {2d})),
\quad
\op _{\mathfrak V}^{\aw}(\wideparen \maclA _{0,s_2}(\cc {2d}))
=
\op _{\mathfrak V}(\wideparen \maclA _{0,s_2}(\cc {2d})),
\end{equation}

\vspace{-0.35cm}

%
\begin{alignat}{4}
\op _{\mathfrak V}^{\aw}(\wideparen \maclC _{s_1}(\cc {2d}))
&= &
&\op _{\mathfrak V}(\wideparen \maclC _{s_1}(\cc {2d})) &
&\subseteq 
\op _{\mathfrak V}(\wideparen \maclB _{s_1}(\cc {2d})) &&
\notag
\\
&&&&
& = 
\sets {T_K}{K\in \wideparen \maclB _{s_1}(\cc {2d})} &
&=
\maclL (\maclA _{s_1}(\cc d)),
\\[1ex]
\op _{\mathfrak V}^{\aw}(\wideparen \maclC _{s_1}^\star (\cc {2d}))
&= &
&\op _{\mathfrak V}(\wideparen \maclC _{s_1}^\star (\cc {2d})) &
&\subseteq 
\op _{\mathfrak V}(\wideparen \maclB _{s_1}^\star (\cc {2d})) &&
\notag
\\
&&&&
&= 
\sets {T_K}{K\in \wideparen \maclB _{s_1}^\star (\cc {2d})} &
&=
\maclL (\maclA _{s_1}'(\cc d)),
\\[1ex]
\op _{\mathfrak V}^{\aw}(\wideparen \maclC _{0,s_2}(\cc {2d}))
&= \ &
&\op _{\mathfrak V}(\wideparen \maclC _{0,s_2}(\cc {2d})) &
&\subseteq 
\op _{\mathfrak V}(\wideparen \maclB _{0,s_2}(\cc {2d})) &&
\notag
\\
&&&&
&= 
\sets {T_K}{K\in \wideparen \maclB _{0,s_2}(\cc {2d})} &
&=
\maclL (\maclA _{0,s_2}(\cc d)),
\intertext{and}
\op _{\mathfrak V}^{\aw}(\wideparen \maclC _{0,s_2}^\star (\cc {2d}))
&= &
&\op _{\mathfrak V}(\wideparen \maclC _{0,s_2}^\star (\cc {2d})) &
&\subseteq 
\op _{\mathfrak V}(\wideparen \maclB _{0,s_2}^\star (\cc {2d})) &&
\notag
\\
&&&&
&= 
\sets {T_K}{K\in \wideparen \maclB _{0,s_2}^\star (\cc {2d})} &
&=
\maclL (\maclA _{0,s_2}'(\cc d)).
\end{alignat}
%

\par


\begin{thebibliography}{99}

\bibitem{AbCaTo}
A. Abdeljawad, M. Cappiello, J. Toft,
\emph{Pseudo-differential calculus in anisotropic Gelfand-Shilov setting},
Integr. Equ. Oper. Theory \textbf{91} (2019), 91--26.

\bibitem{AbCoTe}
A. Abdeljawad, S. Coriasco, N. Teofanov,
\emph{Bilinear Pseudo-differential Operators with Gevrey--H\"ormander Symbols},
Mediterr. J. Math. \textbf{17} 120 (2020).

\bibitem{AbFeGaToUs}
A. Abdeljawad, C. Fernandez, A. Galbis, J. Toft, R {\"U}ster,
\emph{Characterizations of Pilipovi\'c spaces
by powers of harmonic oscillator}, RACSAM \textbf{114} (2020), 131. 

\bibitem{B1}
V. Bargmann,
\emph{On a Hilbert space of analytic
functions and an associated integral transform}, Comm. Pure
Appl. Math. \textbf{14} (1961), 187--214.

\bibitem{B2}
V. Bargmann,
\emph{On a Hilbert space of analytic functions and an associated
integral transform. Part II. A family of related
function spaces. Application to distribution theory}, Comm. Pure
Appl. Math. \textbf{20} (1967), 1--101.

\bibitem{Berezin71}
F. A. Berezin,
\emph{Wick and anti-{W}ick symbols of operators},
Mat. Sb. (N.S.) \textbf{86} (1971), 578--610.

\bibitem{Berezin72}
F. A. Berezin,
\emph{Covariant and contravariant symbols of operators},
Izv. Akad. Nauk SSSR \textbf{36} (1972), 1134--1167.

\bibitem{BonChe} {J. M. Bony, J. Y. Chemin} \emph{Espaces Functionnels
Associ\'es au Calcul de Weyl-H{\"o}rmander}, Bull. Soc. math. France
\textbf{122} (1994), 77--118.

\bibitem{Cappiello1}
M. Cappiello, R. Schulz, P. Wahlberg,
\emph{Conormal distributions in the Shubin calculus of pseudodifferential operators},
J. Math. Phys. \textbf{59} (2018), 021502.

\bibitem{CaTo}
M. Cappiello, J. Toft,
\emph{Pseudo-differential operators
in a Gelfand--Shilov setting}, Math. Nachr. \textbf{290} (5-6) (2017), 738--755.

\bibitem{CPRT10} E. Cordero, S. Pilipovi\'c, L. Rodino, N. Teofanov,
\emph{Quasianalytic Gelfand-Shilov spaces with applications
to localization operators}, Rocky Mountain. J. Math. \textbf{40} (2010),
1123-1147.

\bibitem{BogBuzRod} P. Boggiatto, E. Buzano, L. Rodino
\emph{Global Hypoellipticity and Spectral Theory}, Akademie Verlag, Berlin, 1996.

\bibitem{Fo}  {G. B. Folland}, \emph{Harmonic analysis in phase space},
{Princeton University Press, Princeton}, 1989.

\bibitem{GS} I. M. Gelfand, G. E. Shilov, \emph{Generalized functions, II-III},
Academic Press, NewYork London, 1968.

\bibitem{Gc2} {K. Gr{\"o}chenig}, \emph{Foundations of
time-frequency analysis},
\newblock Birkh{\"a}user, Boston, 2001.

\bibitem{Groch} K. Gr{\"o}chenig, \emph{Weight functions in time-frequency analysis,
\rm {in: L. Rodino, M. W. Wong (Eds.) Pseudodifferential
Operators: Partial Differential Equations and Time-Frequency Analysis}},
Fields Institute Comm., \textbf{52} (2007), pp. 343--366.

\bibitem{GZ} K. Gr{\"o}chenig, G. Zimmermann,
\emph{Spaces of test functions via the STFT} J. Funct. Spaces Appl. \textbf{2}
(2004), 25--53.

\bibitem{Hall} B.~C. Hall, \emph{Quantum Theory for Mathematicians},
Springer, New York, 2013.

\bibitem{Ho1} L. H{\"o}rmander, \emph{The analysis of linear
partial differential operators}, vol {I--III},
Springer-Verlag, Berlin Heidelberg NewYork Tokyo, 1983, 1985.

\bibitem{Lieb1} E. H. Lieb,
\emph{Integral bounds for radar ambiguity functions and Wigner distributions},
J. Math. Phys. \textbf{31} (1990), 594--599.

\bibitem{Lieb2} E. H. Lieb,
\emph{The stability of matter from atoms to stars},
Bull. Amer. Math. Soc. \textbf{22} (1990), 1--49.

\bibitem{LieSol}  E. H. Lieb, J. P. Solovej,
\emph{Quantum coherent operators: a generalization of coherent states},
Lett. Math. Phys. \textbf{22} (1991), 145--154.

\bibitem{LozPer} Z. Lozanov Crvenkovi{\'c}, D. Peri{\v s}i{\'c},
\emph{Hermite expansions of elements of Gelfand Shilov spaces
in quasianalytic and non quasianalytic case},
Novi Sad J. Math. \textbf{37} (2007), 129--147.

\bibitem{Pil}
S. Pilipovi{\'c} \emph{Tempered ultradistributions},
Boll. U.M.I. \textbf{7} (1988), 235--251.

\bibitem{Prangoski}
B. Prangoski,
\emph{Pseudodifferential operators of infinite order in spaces
of tempered ultradistributions},
J. Pseudo-Differ. Oper. Appl. \textbf{4}  (2013), 495--549.

\bibitem{RS} M. Reed and B. Simon,  \emph{Methods of modern
mathematical physics}, Academic Press, London New York, 1979.

\bibitem{SchWol} H. H. Schaefer, M. P. Wolff
\emph{Topological vector spaces, 2nd. Ed.},
Graduate Texts in Mathematics, \textbf{3}, Springer-Verlag,
New York, 1999.

\bibitem{Shubin1}
M.~A.~Shubin, \textit{Pseudodifferential operators and spectral theory},
Springer, 2001.

\bibitem{Teof}
N. Teofanov,
\emph{Ultradistributions and time-frequency analysis}. In
``Pseudo-differential operators and related topics'', 173--192, Oper.
Theory Adv. Appl., \textbf{164}, Birkh\"auser, Basel, 2006.

\bibitem{Teof2}
N. Teofanov,
\emph{Gelfand-Shilov spaces and localization operators},
Funct. Anal. Approx. Comput. \textbf{7} (2015), 135--158.

\bibitem{Teofanov2}
N. Teofanov, J. Toft,
\emph{Pseudo-differential calculus in a Bargmann setting},
 Ann. Acad. Sci. Fenn. Math. \textbf{45} (2020), 227--257.

\bibitem{TeToWa}
N. Teofanov, J. Toft, P. Wahlberg,
\emph{Pseudo-differential operators with isotropic symbols,
Wick and anti-Wick operators, and hypoellipticity}, preprint,
arXiv:2011.00313.

\bibitem{To11} J. Toft, \emph{The Bargmann transform on modulation and
Gelfand-Shilov spaces, with applications to Toeplitz and pseudo-differential
operators},  J. Pseudo-Differ. Oper. Appl.  \textbf{3}  (2012), 145--227.

\bibitem{Toft17}
\emph{Matrix parameterized pseudo-differential calculi on modulation spaces
{\rm {in: M. Oberguggenberger, J. Toft, J. Vindas, P. Wahlberg (eds)}},
Generalized functions and Fourier analysis},
Operator Theory: Advances and Applications \textbf{260}
Birkh{\"a}user, 2017, pp. 215--235.

\bibitem{Toft18} J. Toft,
\emph{Images of function and distribution spaces under the
Bargmann transform}, J. Pseudo-Differ. Oper. Appl. \textbf{8} (2017), 83--139.

\bibitem{Tre} F. Treves,
\emph{Topological vector spaces, distributions and kernels}, Academic
Press, NewYork London, 1967.

\bibitem{Vog} D. Vogt
\emph{Regularity properties of (LF)-spaces, Progress in functional analysis
(Pe{\~n}iscola, 1990)}, North-Holland Math. Stud. \textbf{170}, North-Holland, 
Amsterdam, 1992, pp. 57--84.

\bibitem{Wen} J. Wengenroth
\emph{Acyclic inductive spectra of Fr{\'e}chet spaces},
Stud. Math. \textbf{120} (1996), 247--258.

\bibitem{Wick} G. C. Wick,
\emph{The evaluation of the collision matrix}, Phys. Rev. \textbf{80} (1950),
268--272.

\end{thebibliography}
\end{document}